\newtheorem{thm}{Theorem}[section]
\newtheorem*{thm*}{Theorem}
\newtheorem{cor}[thm]{Corollary}
\newtheorem{lem}[thm]{Lemma}
\newtheorem{prop}[thm]{Proposition}
\newtheorem*{prop*}{Proposition}
\newtheorem*{conj*}{Conjecture}
\newtheorem{defn}[thm]{Definition}
\newtheorem*{dfn*}{Definition}
\theoremstyle{definition}
\newtheorem{rem}[thm]{\textbf{Remark}}
\newtheorem*{rmk*}{Remark}
\newtheorem*{fact*}{Fact}
\theoremstyle{proof}
\renewcommand{\labelenumi}{(\arabic{enumi})}
\numberwithin{equation}{section}
\newcommand{\floor}[1]{\lfloor #1 \rfloor}
\newcommand{\norm}[1]{\left\Vert#1\right\Vert}
\newcommand{\snorm}[1]{\Vert#1\Vert}
\newcommand{\abs}[1]{\left\vert#1\right\vert}
\newcommand{\set}[1]{\left\{#1\right\}}
\newcommand{\brac}[1]{\left(#1\right)}
\newcommand{\scalar}[1]{\left \langle #1 \right \rangle}
\newcommand{\Real}{\mathbb{R}}
\newcommand{\M}{\mathbb{M}}
\newcommand{\eps}{\epsilon}
\renewcommand{\det}{{\rm det}}
\newcommand{\Cov}{{\rm Cov}}
\newcommand{\vrad}{{\rm vrad}}
\newcommand{\volrad}{\vrad}
\newcommand{\tr}{{\rm tr}}
\newcommand{\Eps}{\mathcal{E}}
\newcommand{\E}{\mathbb{E}}
\renewcommand{\P}{\mathbb{P}}
\renewcommand{\L}{\mathcal{L}}
\renewcommand{\M}{\mathcal{M}}
\begin{document}

\title{Generalized Dual Sudakov Minoration via Dimension Reduction - A Program}

\author{Shahar Mendelson\textsuperscript{1}, Emanuel Milman\textsuperscript{2} and Grigoris Paouris\textsuperscript{3}}

\footnotetext[1]{Department of Mathematics, Technion - Israel Institute of Technology, Haifa 32000, Israel, and Mathematical Sciences Institute, The Australian National University, Canberra ACT  2601, Australia. Email: shahar@tx.technion.ac.il.}
\footnotetext[2]{Department of Mathematics, Technion - Israel Institute of Technology, Haifa 32000, Israel. Email: emilman@tx.technion.ac.il.}  
\footnotetext[3]{Department of Mathematics, Texas A\&M University, College Station, TX 77843, USA. Email: grigorios.paouris@gmail.com.\\
S.M. was supported in part by the Israel Science Foundation. G.P. was supported by the US NSF grant CAREER-1151711. E.M. and G.P. were supported by BSF (grant no. 2010288). The research leading to these results is part of a project that has received funding from the European Research Council (ERC) under the European Union's Horizon 2020 research and innovation programme (grant agreement No 637851).}

\begingroup    \renewcommand{\thefootnote}{}    \footnotetext{2010 Mathematics Subject Classification: 52A23, 52A21, 46B09, 46B07.}
    \footnotetext{Keywords: Sudakov minoration, log-concave measures, convex bodies, packing and covering numbers, dimension reduction.}
\endgroup

\date{\today}
\maketitle

\begin{abstract}
We propose a program for establishing a conjectural extension to the class of (origin-symmetric) log-concave probability measures $\mu$,
of the classical dual Sudakov Minoration on the expectation of the supremum of a Gaussian process:
\begin{equation} \label{eq:abstract}
M(Z_p(\mu) , C \int \norm{x}_K d\mu \cdot K) \leq \exp(C p) \;\;\; \forall p \geq 1 .
\end{equation}
Here $K$ is an origin-symmetric convex body, $Z_p(\mu)$ is the $L_p$-centroid body associated to $\mu$, $M(A,B)$ is the packing-number of $B$ in $A$, and $C > 0$ is a universal constant. 
The Program consists of first establishing a Weak Generalized Dual Sudakov Minoration, involving the dimension $n$ of the ambient space, which is then self-improved to a dimension-free estimate after applying a dimension-reduction step. The latter step may be thought of as a conjectural ``small-ball one-sided" variant of the Johnson--Lindenstrauss dimension-reduction lemma. We establish the Weak Generalized Dual Sudakov Minoration for a variety of log-concave probability measures and convex bodies (for instance, this step is fully resolved assuming a positive answer to the Slicing Problem). The Separation Dimension-Reduction step is fully established for ellipsoids and, up to logarithmic factors in the dimension, for cubes, resulting in a corresponding Generalized (regular) Dual Sudakov Minoration estimate for these bodies and arbitrary log-concave measures, which are shown to be (essentially) best-possible. Along the way, we establish a regular version of (\ref{eq:abstract}) for all $p \geq n$ and provide a new direct proof of Sudakov Minoration via The Program. 
\end{abstract}

\section{Introduction}

Let $\gamma_n$ denote the standard Gaussian measure on $\Real^n$, and let $K \subset \Real^n$ denote a convex body, that is a convex compact set with non-empty interior. For simplicity, we assume that $K$ is origin-symmetric, $K = -K$, and denote by $\norm{\cdot}_K$ the associated norm whose unit-ball is $K$. The dual norm is denoted $\norm{\cdot}^*_K$. Given two compact sets $A,B \subset \Real^n$, we denote by $M(A,B)$ the packing number of $B$ in $A$, i.e. the maximal integer $M$ so that there exist $\set{x_i}_{i=1,\ldots,M} \subset A$ with $x_i + B$ mutually disjoint (``$\set{x_i}$ are $B$-separated").

\medskip

This paper is dedicated to the study of a conjectural generalized version of the classical Sudakov Minoration estimate \cite{SudakovMinoration} and its dual version (due to Pajor--Tomczak-Jaegermann \cite{PajorTomczakLowMStar}, see also \cite[Chapter 3.3]{LedouxTalagrand-Book}):
\begin{thm*}[Sudakov and Dual Sudakov Minoration]
\hfill
\begin{enumerate} 
\item Sudakov Minoration for $\ell^*(K) := \int \norm{x}_K^* d\gamma_n(x)$:
\[
M(K , t B_2^n) \leq \exp( C \ell^*(K)^2 / t^2) \;\;\; \forall t > 0  . 
\]
\item Dual Sudakov Minoration for $\ell(K) := \int \norm{x}_K d\gamma_n(x)$:
\[
M(B_2^n , t K) \leq \exp( C \ell(K)^2 / t^2) \;\;\; \forall t > 0  .
\]
\end{enumerate}
\end{thm*}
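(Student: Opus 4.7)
The plan is to prove the two parts by different classical arguments: part (2) admits a direct Gaussian-measure derivation (the Pajor--Tomczak-Jaegermann approach), while part (1) goes through Gaussian-process comparison. I would treat (2) first, since it is cleaner and more directly informs the program of the paper.

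\textbf{Part (2).} Fix $\{x_i\}_{i=1}^M \subset B_2^n$ with $\{x_i + tK\}_{i=1}^M$ pairwise disjoint, and for a scale $s > 0$ to be chosen, work with the rescaled Gaussian measure $\gamma_n^s$ of density $(2\pi s^2)^{-n/2}\exp(-|y|_2^2/(2s^2))$. The elementary shift inequality
\[
\gamma_n^s(x+A) \;\geq\; e^{-|x|_2^2/(2s^2)}\, \gamma_n^s(A)
\]
holds for every origin-symmetric convex $A$ and $x \in \mathbb{R}^n$: expand $|y+x|_2^2 = |y|_2^2 + 2\langle x,y\rangle + |x|_2^2$ inside the integral defining $\gamma_n^s(x+A)$, symmetrize $y\mapsto -y$ using $A=-A$, and apply $\cosh \geq 1$. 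Taking $A = tK$ and using $|x_i|_2 \leq 1$, disjointness of the translates gives
\[
1 \;\geq\; \sum_{i=1}^M \gamma_n^s(x_i + tK) \;\geq\; M\, e^{-1/(2s^2)}\, \gamma_n^s(tK).
\]
Since $\int \|y\|_K\, d\gamma_n^s(y) = s\,\ell(K)$, Markov's inequality yields $\gamma_n^s(tK) \geq 1/2$ as soon as $2s\,\ell(K) \leq t$. Choosing $s := t/(2\ell(K))$ produces $M \leq 2\exp(2\ell(K)^2/t^2)$, which is (2) up to the universal constant.

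\textbf{Part (1).} Let $\{x_i\}_{i=1}^M \subset K$ be such that $\{x_i + tB_2^n\}_{i=1}^M$ are pairwise disjoint, so $|x_i - x_j|_2 > 2t$ for $i\neq j$. Consider the centered Gaussian process $G(x) := \langle g,x\rangle$ with $g\sim\gamma_n$, whose $L^2$-increments on $\{x_i\}$ satisfy $\|G(x_i)-G(x_j)\|_{L^2} = |x_i-x_j|_2 > 2t$. Introduce independent standard Gaussians $\{g_i\}$ and set $Z_i := t\sqrt{2}\, g_i$, so that $\|Z_i - Z_j\|_{L^2} = 2t$ for $i\neq j$. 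The Sudakov--Fernique Gaussian comparison lemma then gives
\[
\ell^*(K) \;=\; \mathbb{E}\sup_{x\in K} G(x) \;\geq\; \mathbb{E}\max_{i\leq M} G(x_i) \;\geq\; \mathbb{E}\max_{i\leq M} Z_i \;=\; t\sqrt{2}\, \mathbb{E}\max_{i\leq M} g_i \;\geq\; c\, t\sqrt{\log M},
\]
invoking the standard $\mathbb{E}\max_{i\leq M} g_i \geq c\sqrt{\log M}$. Rearranging produces $\log M \leq C\,\ell^*(K)^2/t^2$.

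The main obstacle is part (1): it rests on the Sudakov--Fernique (or Slepian) Gaussian comparison inequality, a nontrivial result about correlation structures of Gaussian processes (typically established by a Gaussian interpolation argument). Part (2), by contrast, invokes only origin-symmetry of $K$ and a single Markov estimate --- precisely the feature that makes the \emph{dual} Sudakov minoration the natural candidate for the log-concave extension \eqref{eq:abstract} pursued in this paper, since its proof does not exploit the fine Gaussian correlation structure required for (1).
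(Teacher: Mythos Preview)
Your proofs are correct and are essentially the classical ones: your argument for (2) is the Pajor--Tomczak-Jaegermann/Talagrand shift-measure approach, and your argument for (1) is Sudakov's original route via Gaussian comparison (Sudakov--Fernique). The paper in fact cites this theorem in the Introduction without proof, precisely as a classical result established by these methods.

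Where the paper departs from you is in Section~\ref{sec:Sudakov}, which offers a \emph{different} proof of part (1) as a warm-up for The Program. There, instead of Sudakov--Fernique, the paper first proves a weak volumetric bound $M(K, t M^*(K) B_2^n) \leq \exp(n/t)$ using only Urysohn's inequality (Lemma~\ref{lem:weak-Sudakov}), and then bootstraps this to the sharp $\exp(n/t^2)$ via Johnson--Lindenstrauss dimension reduction. The paper explicitly contrasts this with the textbook route you took (Talagrand's dual covering estimate plus Tomczak-Jaegermann duality). What your argument buys is brevity and the full strength of Gaussian process theory in one stroke; what the paper's argument buys is a template---weak estimate plus dimension reduction---that avoids Gaussian comparison entirely and generalizes to the log-concave setting, which is the whole point of the paper. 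Your closing remark that the dual version (2) is the natural candidate for log-concave extension because its proof avoids fine Gaussian correlation structure is well-taken and aligns with the paper's philosophy; indeed, a variant of your shift-measure argument for (2) reappears in Proposition~\ref{prop:Talagrand}.
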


The term ``minoration" refers to the resulting lower bounds on $\ell^*(K)$ and $\ell(K)$ as a function of the packing numbers. 
Here and throughout this work, $C$,$C'$,$C''$,$c$, etc... denote positive universal constants, independent of any other parameter (and in particular the dimension $n$),  whose value may change from one occurrence to the next. We use $A \simeq B$ to signify that $c \leq A/B \leq C$. The Euclidean unit-ball is denoted by $B_2^n$. 

\medskip

Sudakov Minoration plays a key-role in the proof of M.~Talagrand's ``Majorizing Measures" theorem \cite{Talagrand-RegularityOfGaussianProcesses,Talagrand-GenericChaining,Talagrand-Book}, which gives two-sided bounds on the expected supremum of a Gaussian process in terms of a certain geometric parameter associated with the indexing set. Subsequently (see \cite{Talagrand-Book} and the references therein), Talagrand extended his characterization to more general processes sampled from i.i.d. Bernoulli random variables and measures of the form $\exp(-\sum_{i=1}^n \abs{x_i}^p) dx$, $p \in [1,\infty]$, and the case of general log-concave \emph{product} measures (with moderate tail decay) was obtained by R.~Lata{\l}a \cite{Latala-SudakovMinorationAndGenericChaining}.  Recall that a probability measure $\mu$ on $\Real^n$ is called log-concave if $\mu = \exp(-V(x)) dx$ with $V : \Real^n \rightarrow \Real \cup \set{+\infty}$ convex. Motivated by an attempt to extend Talagrand's characterization to more general log-concave measures,  Lata{\l}a \cite{Latala-GeneralizedSudakovMinoration} and independently the authors (unpublished) conjectured the following \emph{Generalized Sudakov Minoration} bounds:

\begin{conj*} 
For any origin-symmetric log-concave probability measure $\mu$ on $\Real^n$ and origin-symmetric convex body $K\subset \Real^n$:
\begin{enumerate}
\item Generalized Sudakov Minoration for $I^*_1(\mu,K) := \int \norm{x}_K^* d\mu(x)$:
\[
M(K , C I_1^*(\mu,K) B_p(\mu)) \leq \exp(C p) \;\;\; \forall p \geq 1.
\]
\item Generalized Dual Sudakov Minoration for $I_1(\mu,K) := \int \norm{x}_K d\mu(x)$:
\[
M(Z_p(\mu) , C I_1(\mu,K) K) \leq \exp(C p) \;\;\; \forall p \geq 1 .
\]
\end{enumerate}
\end{conj*}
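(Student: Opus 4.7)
The plan is to follow the two-step Program outlined in the abstract, focusing on part (2); part (1) can be attacked by an analogous route or through duality. The idea is to first establish a \emph{Weak} version with a controlled dimensional loss, and then self-improve to the conjectured dimension-free estimate via a ``small-ball one-sided'' dimension-reduction step of Johnson--Lindenstrauss flavor.

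For the Weak step, I would aim to prove an estimate of the form
$$
M(Z_p(\mu), C I_1(\mu,K) K) \leq \exp\brac{C (p + f(n))}
$$
for some dimensional function $f$ (for instance $f(n) = n$ or $n \log(en/p)$). The natural route is the volumetric/measure-theoretic argument underlying the classical Pajor--Tomczak-Jaegermann proof of dual Sudakov: if $\set{x_i} \subset Z_p(\mu)$ is $c I_1(\mu,K) K$-separated, then the translates $x_i + \tfrac{c}{2} I_1(\mu,K) K$ are mutually disjoint, and I would control their total $\mu$-mass from below using Borell's lemma (to estimate each $\mu(x_i + \tfrac{c}{2} I_1(\mu,K) K)$ via the log-concave tail behavior of $\norm{\cdot}_K$ with respect to $\mu$, normalized by $I_1(\mu,K)$) together with standard distributional properties of $Z_p(\mu)$ (namely that $Z_p(\mu)$ carries a definite amount of $\mu$-mass on a scale governed by the isotropic constant). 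The Slicing Problem would enter precisely to give uniform density bounds that absorb dimensional dependencies, which is consistent with the abstract's remark that this step is fully resolved under a positive answer to the Slicing Conjecture.

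For the dimension-reduction step, I would need the following one-sided separation principle: given any $\exp(C p)$ points $\set{x_i}$ that are $c I_1(\mu,K) K$-separated in $Z_p(\mu)$, produce a subspace $E \subset \Real^n$ of dimension $\dim E \asymp p$ and a marginal $\mu_E$ of $\mu$ on $E$ such that the projections $P_E x_i$ remain $c' I_1(\mu_E, P_E K) (P_E K)$-separated. Crucially, unlike the two-sided Johnson--Lindenstrauss lemma, only separation (not distance preservation) is required. For ellipsoidal $K$, a random Gaussian projection achieves this immediately, since $K$-distances reduce to Euclidean distances. For cubes, a random coordinate subspace can be shown to work up to logarithmic losses in $n$, by exploiting the tensor structure of $\ell_\infty^n$. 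Once the reduction is in place, applying the Weak estimate on the lower-dimensional $(E,\mu_E, P_E K)$ gives $\exp\brac{C(p + f(\dim E))} = \exp(C p)$, closing the Program.

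The principal obstacle is the dimension-reduction step for a general symmetric convex body $K$. Random orthogonal projections distort $\norm{\cdot}_K$-distances uncontrollably, so the choice of $E$ must be sensitive to the geometry of $K$ --- its type or cotype constants, its modulus of convexity/smoothness, or the behavior of its $M^*$-functional under random sections. The Weak Minoration step, by contrast, is much more amenable, relying on well-developed log-concavity technology (Borell's lemma, large-deviation estimates for $Z_p$-norms, Markov's inequality). Thus I would expect the bulk of future work to lie in finding the correct ``separation-only'' projection lemma for non-Euclidean $K$, with the ellipsoid and cube cases providing the only templates presently available.
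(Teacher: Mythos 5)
The statement you are asked about is the paper's central \emph{Conjecture}, not a theorem: the paper does not prove it, and neither do you. What you have written is an outline of a program --- essentially the same program the paper proposes (weak dimension-dependent minoration plus a one-sided separation dimension reduction) --- with the decisive steps explicitly left open. So there is no proof to compare; the honest assessment is that your proposal reproduces the paper's strategy at the level of intent but does not close any of the gaps that make the statement conjectural. The paper itself only establishes: Part 3 (the case $p\geq m$) in full generality, Part 2 for ellipsoids, for $h$-pure measures, for regular small-diameter bodies, and conditionally on the Slicing Problem, and Part 1 only for ellipsoids and (up to logarithmic factors) for the cube.

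Beyond the overall incompleteness, two concrete issues in your sketch. First, your weak step via disjoint translates $x_i+\tfrac{c}{2}I_1(\mu,K)K$ and lower bounds on their $\mu$-mass is Talagrand's argument; in the paper this only yields the regime $p\geq n$ (Theorem \ref{thm:part3}, via the bodies $K_p(\mu)$ and the exponential measure $\mu_K$). It does not give the Part 2 estimate $\exp(q_m+m\varphi(p/m))$ for $p\leq m$; for that the paper needs genuinely different tools (Steiner's formula and quermassintegral bounds on $Z_p$ for ellipsoids, or covering-number factorization through $Z_n(\mu)$ combined with the Giannopoulos--Milman regularity for pure measures). Second, your dimension-reduction step asks for a subspace of dimension $\asymp p$; this is not what a Johnson--Lindenstrauss-type argument delivers and is not how the bootstrap works. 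The reduction produces a subspace of dimension $m\asymp k$ where $e^k$ is the \emph{unknown} packing number, one must also verify that the image of the massive body retains mass $\geq e^{-q_m}$ with $q_m\leq k/2$, and only then does combining with the weak estimate $k\leq q_m+m\varphi(p/m)\leq k/2+Ak\varphi(p/(Ak))$ force $k\leq Cp$. If you could reduce to dimension $\asymp p$ a priori, the self-improvement would be unnecessary; tying $m$ to $k$ is the whole point, and it is also why Part 3 (the case $p\geq m$ after reduction) is an unavoidable third ingredient that your outline omits.
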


Here $B_p(\mu)$ denotes the unit-ball of the norm given by: \[
\norm{x}_{B_p(\mu)} := \brac{\int \abs{\scalar{y,x}}^p d\mu(y)}^{1/p} ,
\]
and $Z_p(\mu)$ denotes the polar-body $B_p(\mu)^\circ$, defined by $\norm{\cdot}_{Z_p(\mu)} = \norm{\cdot}^*_{B_p(\mu)}$. Up to normalization, the $Z_p(\mu)$ bodies coincide with the $L_p$ centroid-bodies introduced by E. Lutwak and G. Zhang \cite{LutwakZhang-IntroduceLqCentroidBodies}, and have played a pivotal role in the development of our understanding of the volumetric properties of log-concave measures in the last decade (e.g. \cite{GreekBook}). Note that when $\mu = \gamma_n$, since $Z_p(\gamma_n) \simeq \sqrt{p} B_2^n$, the above conjecture precisely coincides with the classical Sudakov Minoration and its dual version. Assuming a general positive answer to a conjecture of Pietsch on the duality of entropy-numbers \cite[p. 38]{Pietsch-Book} (cf. \cite{AMS-Duality-For-Ball,AMST-Duality-For-K-Convex,EMilman-Duality-of-Entropy}), the primal version (1) and dual version (2) of the above conjecture are in fact equivalent; for instance, the results of \cite{AMS-Duality-For-Ball} imply that the two versions are equivalent when $K$ is an ellipsoid. We refer to \cite{Talagrand-GenericChaining,Latala-SudakovMinorationAndGenericChaining,Latala-GeneralizedSudakovMinoration,LatalaTkocz-SudakovMinorationForRegularProduct} for further partial results confirming the Generalized Sudakov Minoration conjecture in particular cases and additional applications. Let us presently only mention that this conjecture has been confirmed in the following cases:
\begin{itemize}
\item For all log-concave \emph{product} measures \cite{Latala-SudakovMinorationAndGenericChaining}; in fact, the same holds for more general regular product measures \cite{LatalaTkocz-SudakovMinorationForRegularProduct}.
\item For all log-concave measures when the extremal points of $K$ are antipodal pairs with disjoint supports \cite{Latala-GeneralizedSudakovMinoration}.
\item For $\mu = \exp(-\varphi(\norm{x}_p)) dx$, $p \in [1,\infty]$, with $\varphi : [0,\infty) \rightarrow \Real \cup \set{+\infty}$ non-decreasing and convex \cite{Latala-GeneralizedSudakovMinoration}.
\item For all log-concave measures when $p \geq 2 n \log(e + n)$ \cite{Latala-GeneralizedSudakovMinoration}.
\end{itemize}

\subsection{Johnson--Lindenstrauss Lemma}

In this work, we choose to concentrate on the conjectured Generalized \emph{Dual} Sudakov estimate, and propose a novel program for establishing it. The Program is based on a conjectural dimension reduction step, which may be thought of as a ``\emph{one-sided} Johnson--Lindenstrauss lemma". Recall that the classical lemma of W.~B.~Johnson and J.~Lindenstrauss \cite{JohnsonLindenstraussLemma} asserts that if $\set{x_i}_{i=1,\ldots,e^k}$ is a collection of (say distinct) points in Euclidean space $X = (\Real^n,\abs{\cdot})$ and $\eps \in (0,1)$, then there exists a map $T : X \rightarrow Y$, $Y = (\Real^m,\abs{\cdot})$ Euclidean, so that:
\begin{equation} \label{eq:distance-preserve}
1-\eps \leq \frac{\norm{T x_i - T x_j}_{Y}}{\norm{x_i - x_j}_{X}} \leq 1+ \eps \;\;\; \forall i \neq j
\end{equation}
with $m \leq C k / \eps^2$. Moreover, $T$ may be chosen to be linear, and a random (appropriately rescaled) orthogonal projection does the job with very high-probability (see Lemma \ref{lem:JL}). 

\medskip
For The Program, we will require an extension of this classical result to more general normed spaces. Such a question was studied by Johnson and A.~Naor \cite{JohnsonNaor-JL}, who showed that this is essentially impossible - even for a fixed $\eps \in (0,1)$, if the normed space $Y$ is an $m$-dimensional subspace of $X$ with $m \leq C_\eps k$, the distance-preservation property (\ref{eq:distance-preserve}) for a linear map $T$ implies that $X$ must be \emph{essentially} Hilbertian (see \cite{JohnsonNaor-JL} for the precise formulation, and also \cite[Section 4, Remark 7]{JohnsonNaor-JL} for the case that $Y$ is not assumed to be a subspace of $X$). It follows that when $X = (\Real^n,\norm{\cdot}_K)$, we cannot in general hope for a \emph{two-sided} estimate (\ref{eq:distance-preserve}). 

\medskip
However, for our purposes, we will only need to satisfy the \emph{left-hand-side} inequality in (\ref{eq:distance-preserve}): if the points $\set{x_i}$ are well-separated in $X$, so should their images $\set{T x_i}$ be in $Y$ (``Separation Dimension Reduction"). Of course, without some additional requirement, this is always possible, simply by scaling the norm of $Y$ or the map $T$ in the numerator above. The additional requirement which replaces the right-hand-side inequality in (\ref{eq:distance-preserve}) is that the unit-ball of $Y$ be ``massive enough", as measured with respect to $T_* \mu := \mu \circ T^{-1}$, the push-forward of the measure $\mu$ by $T$,  thereby precluding trivial rescaling attempts. In a sense, this is an averaged variant (with respect to the given measure $\mu$) of the pointwise right-hand-side requirement in (\ref{eq:distance-preserve}). This conjectural ``one-sided Johnson--Lindenstrauss" separation dimension-reduction is in our opinion a fascinating question, which we plan to explore more in depth in the future; it constitutes Part 1 (or more precisely, Part 1') of our proposed program. We are now ready to describe it and the remaining parts of The Program in more detail.

\subsection{The Program - Simplified Version} \label{subsec:TheProgram}

Our proposed program consists of three parts; for simplicity, we describe here a simplified version, postponing a description of the full version to Section \ref{sec:full-program}. 
 Part 1 is a conjectural dimension-reduction step, already alluded to above: if $Z_p(\mu)$ is separated by $e^k$ translates of $K$, then there should be a linear map $T : \Real^n \rightarrow \Real^m$ with $m\simeq k$ so that $T Z_p(\mu) = Z_p(T_* \mu)$ is separated by $e^k$ translates of another star-body $L \subset \Real^m$, which we may choose at our discretion from a family of candidates $\L_m$, with the only requirement being that it should be massive enough with respect to $T_* \mu$. Part 2 consists of establishing a weak version of the Generalized Dual Sudakov estimate for the pair $Z_p(T_* \mu)$ and $L$, which is allowed to depend (in an appropriate manner) on the dimension $m$ of the ambient space. Part 3 consists of establishing the Generalized Dual Sudakov estimate for the latter pair when $p$ is larger than $m$. We will show in Theorem \ref{thm:program} below that the weak estimate of Part 2 may be amplified by means of a bootstrap argument employing Part 1, so as to fit the correct estimate of Part 3, thereby concluding that confirmation of all three parts would imply the Generalized Dual Sudakov Minoration Conjecture. We begin with describing the simplified version of The Program in greater detail. 

\smallskip

A compact set $L \subset \Real^n$ having the origin in its interior is called a star-body if $t L \subset L$ for all $t \in [0,1]$. Given an absolutely continuous probability measure $\mu$ on $\Real^n$, denote $m_q(\mu,L) := \sup \set{ s > 0 ; \mu( s L) \leq e^{-q} }$ so that $\mu(m_q(\mu,L) L) = e^{-q}$. 

\smallskip

Fix an origin-symmetric convex body $K \subset \Real^n$, origin-symmetric log-concave measure $\mu$ on $\Real^n$ and $p \geq 1$. It is known  (see Lemma \ref{lem:Guedon}) that in such a case $I_1(\mu,K) \simeq m_1(\mu,K)$, and so up to universal constants we need not distinguish between these two parameters. 
For all $m = 1,\ldots, n$, set $\M_m := \set{ T_* \mu \; ; \; T : \Real^n \rightarrow \Real^m \text{ linear}}$, which is a family of log-concave measures on $\Real^m$ by the Pr\'ekopa--Leindler theorem (e.g. \cite{GardnerSurveyInBAMS}). In addition, let $\L_m$ denote some family of origin-symmetric star-bodies in $\Real^m$,
so that $K \in \L_n$. The simplified version of The Program for establishing the Generalized Dual Sudakov estimate:
\begin{equation} \label{eq:gen-dual-Sudakov}
\mu(K) \geq \frac{1}{e} \;\; \; \Rightarrow \;\;\; M(Z_p(\mu) , R K) \leq \exp(C_{A,B,\varphi} p ) ,
\end{equation}
consists of establishing the following three parts for some constants $R,A,B \geq 1$ and a certain function $\varphi$, described below; here $k$ is a positive real number. 

\begin{enumerate}
\item \textbf{Part 1 (Massive Separation Dimension Reduction)}. \\
If $M(Z_p(\mu) , R K) = e^k$ with  $\mu(K) \geq \frac{1}{e}$ and $2B \leq k \leq n/A$, show that there exists a linear map $T: \Real^n \rightarrow \Real^m$ and $L \in \L_m$, with $m \leq A k$, so that:
\begin{enumerate}
\item  $M(T Z_p(\mu), L) \geq e^k$ (\textbf{``Separation Dimension Reduction"}).
\item $T_* \mu(L) \geq \exp(-q_m)$, $1 \leq q_m \leq k/2$ (``\textbf{$L$ is sufficiently massive}"). 
\end{enumerate}
\item \textbf{Part 2 (Weak Generalized Dual Sudakov)}. \\
For all $m=1,\ldots,n$, $L \in \L_m$ and $\nu \in \M_m$, show that:
\[
1 \leq p \leq m \;\; , \;\; \nu(L) \geq \exp(-q_m) \;\;\; \Rightarrow \;\;\; M(Z_p(\nu) , L) \leq \exp(q_m + m \varphi(p/m)) ,
\]
where $\varphi : [0,1] \rightarrow \Real_+$ is an increasing function with $\varphi(0) = 0$ and $x \mapsto \varphi(x) / x$ non-increasing (and is independent of all other parameters).
\item \textbf{Part 3 (Large $p$)}. \\
For all $m=1,\ldots,n$, $L \in \L_m$ and $\nu \in \M_m$, show that:
\[
p \geq m \;\; , \;\; \nu(L) \geq \exp(-q_m)  \;\;\; \Rightarrow \;\;\; M(Z_p(\nu) , L) \leq \exp(q_m + B p) . 
\]
\end{enumerate}

\begin{rem}
The following \emph{linear} version of Part 1 should be kept in mind:
\begin{enumerate}
\renewcommand\theenumi{(\arabic{enumi}')}
\renewcommand\labelenumi{\theenumi}
\item 
\textbf{Part 1' - Linear Version} \\
If $\set{x_i}_{i=1,\ldots,e^k} \subset \Real^n$ is a collection of $K$-separated points with $\mu(K) \geq \frac{1}{e}$ and $2B \leq k \leq n/A$, show that there exist a linear map $T: \Real^n \rightarrow \Real^m$ and $L \in \L_m$, with $m \leq A k$, so that:
\begin{enumerate}
\item  $\set{T(x_i)}_{i=1,\ldots,e^k} \subset \Real^m$ are $\frac{1}{R} L$-separated (\textbf{``One-sided Johnson--Lindenstrauss"}).
\item $T_* \mu(L) \geq \exp(-q_m)$, $1 \leq q_m \leq k/2$ (``\textbf{$L$ is sufficiently massive}"). 
\end{enumerate}
\end{enumerate}
By applying this linear version of Part 1 to the maximal collection of $K$-separated points $\set{x_i}$ in $\frac{1}{R} Z_p(\mu)$, it is evident that establishing Part 1' is sufficient for establishing Part 1 of The Program. 
However, this is not an equivalent reformulation, and we will also see in Section \ref{sec:part1-cubes} an example where a non-linear combinatorial argument is required for establishing Part 1. 
\end{rem}

\begin{rem} \label{rem:part2}
Note that using $\varphi(t) = t$ in Part 2 with $m=n$ precisely corresponds to establishing the Generalized Dual Sudakov Minoration conjecture. Part 2 provides the added flexibility of using a weaker function $\varphi$. For instance, using $\varphi(t) = t^q$ for some $q \in (0,1)$ corresponds to establishing $M(Z_p(\nu) , L) \leq \exp(q_m + m^{1-q} p^q)$, i.e. a weak dimension-dependent confirmation of the conjecture for $\nu \in \M_m$ and $L \in \L_m$. 
\end{rem}

\begin{thm}[The Program Yields Generalized Dual Sudakov] \label{thm:program}
Establishing (the simplified version of) The Program above yields the Generalized Dual Sudakov Estimate (\ref{eq:gen-dual-Sudakov}). 
\end{thm}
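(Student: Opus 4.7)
My plan is to bound $k := \log M(Z_p(\mu), RK)$ by a constant (depending only on $A, B, \varphi$) times $p$ via a case analysis that combines Parts 1--3 according to where $k$ and $p$ sit relative to the ambient dimension $n$. The extreme regimes are cheap: if $k \leq 2B$ then already $k \leq 2Bp$, and if $p \geq n$ one applies Part 3 with $m = n$, $\nu = \mu$, $L = K$ and $q_n = 1$ (valid because $\mu(K) \geq 1/e$), then uses $M(Z_p(\mu), RK) \leq M(Z_p(\mu), K)$ (valid for $R \geq 1$) to obtain $k \leq 1 + Bp$.

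The principal regime is $2B \leq k \leq n/A$ with $p < n$. Here I would apply Part 1 to produce a linear $T : \Real^n \to \Real^m$ and $L \in \L_m$ with $m \leq Ak$, $q_m \leq k/2$, $M(TZ_p(\mu), L) \geq e^k$, and $T_*\mu(L) \geq e^{-q_m}$. A standard support-function identity,
\[
h_{Z_p(T_*\mu)}(x) = \brac{\int |\scalar{z,x}|^p \, d(T_*\mu)(z)}^{1/p} = h_{Z_p(\mu)}(T^*x) = h_{TZ_p(\mu)}(x),
\]
gives $Z_p(T_*\mu) = TZ_p(\mu)$, so $(T_*\mu, L) \in \M_m \times \L_m$ is an eligible input for Parts 2 and 3, which turn the lower bound of Part 1 into an upper bound on $k$. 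If $p \geq m$, Part 3 yields $e^k \leq \exp(q_m + Bp) \leq \exp(k/2 + Bp)$, hence $k \leq 2Bp$. If $p \leq m$, Part 2 yields $k \leq q_m + m\varphi(p/m) \leq k/2 + m\varphi(p/m)$, i.e. $m\varphi(p/m) \geq k/2$. The key structural observation is that the hypothesis ``$\varphi(t)/t$ non-increasing'' is equivalent to $s \mapsto s\,\varphi(p/s)$ being non-decreasing in $s \geq p$; combined with $p \leq m \leq Ak$ this upgrades to $Ak \cdot \varphi(p/(Ak)) \geq m\varphi(p/m) \geq k/2$, so $\varphi(p/(Ak)) \geq 1/(2A)$. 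Inverting $\varphi$ (increasing with $\varphi(0)=0$ on $[0,1]$) gives $p/(Ak) \geq c_0 := \min\set{1, \varphi^{-1}(1/(2A))} > 0$, and thus $k \leq p/(Ac_0)$.

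The lone remaining case, $k > n/A$ with $p < n$, I would settle by applying Part 2 with \emph{no} dimension reduction ($T = \mathrm{id}$, $m = n$, $L = K$, $q_n = 1$) to obtain $k \leq 1 + n\varphi(p/n)$; combined with $k > n/A$ this forces $\varphi(p/n) \geq 1/(2A) - 1/n$, and the same inversion of $\varphi$ gives $p \geq c_0' n$ once $n$ exceeds a threshold depending only on $A$, so $k \leq 1 + n\varphi(1) \leq (1 + \varphi(1)/c_0')\,p$; the finitely many residual small-$n$ cases give a constant bound on $k$ that is absorbed into the prefactor of $p$. The main conceptual hurdle is recognizing that the quantitative linear reduction $m \leq Ak$ of Part 1, married to the monotonicity of $s \mapsto s\,\varphi(p/s)$, is exactly the mechanism that lets Part 2's weak, dimension-dependent bound self-improve into the sharp $\exp(Cp)$ estimate --- this bootstrap is the heart of the proof, and its functioning is entirely controlled by the shape hypotheses placed on $\varphi$.
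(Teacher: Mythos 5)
Your proposal is correct and follows essentially the same route as the paper's proof: apply Part 1 to reduce to dimension $m \leq Ak$, then run the Part 2 / Part 3 dichotomy according to whether $p \leq m$ or $p \geq m$, with the monotonicity of $s \mapsto s\,\varphi(p/s)$ (equivalently, of $x \mapsto \varphi(x)/x$) providing the self-improvement to $k \lesssim_{A,B,\varphi} p$. The only (cosmetic) difference is your handling of the regimes $p \geq n$ and $k > n/A$ as separate explicit cases via Parts 3 and 2 at $m=n$, where the paper instead notes that the conclusion of Part 1 holds trivially there with $T=\mathrm{Id}$, $L=K$, $q_n=1$ and feeds this into the same dichotomy.
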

\begin{proof}
Assume that $\mu(K) \geq 1/e$. We will show that:
\begin{equation} \label{eq:goal-again}
e^k := M(Z_p(\mu) , R K) \leq \exp(C_{A,B,\varphi} p)  ~,~ C_{A,B,\varphi} := \max\brac{2 B , \frac{1}{A \varphi^{-1}(1/(2A))}} .
\end{equation}
Since $p \geq 1$, we may assume that $k \geq C_{A,B,\varphi} \geq 2B$, otherwise there is nothing to prove. 
We now claim there exists a linear map $T : \Real^n \rightarrow \Real^m$ and $L \in \L_m$ for some $m \leq \min(n, A k)$, so that $M(T Z_p(\mu), L) \geq e^k$ and $T_* \mu( L) \geq \exp(-q_m)$, $1 \leq q_m \leq k/2$. Indeed, if $k < n/A$ this follows from Part 1, whereas if $k \geq n/A$ this is actually trivial by using $m=n$, $T = Id$, $L=K$ and $q_m=1$. Denoting $\nu = T_* \mu \in \M_m$, note that $T Z_p(\mu) = Z_p(\nu)$. Consequently, if $p \geq m$ then by Part 3: \[
\exp(k) \leq M(Z_p(\nu) , L) \leq \exp(q_m + B p) \leq \exp(k/2 + B p) ,
\]
implying that $k \leq 2 B p$, as required. Alternatively, if $p \leq m$ then by Part 2 and the assumption that $x \mapsto \varphi(x)/x$ is non-increasing:
\[
\exp(k) \leq M(Z_p(\nu) , L) \leq \exp(q_m + m \varphi(p/m)) \leq \exp(k/2 + A k \varphi(p/(Ak))) .
\]
It follows since $\varphi$ is increasing from $0$ that:
\[
\frac{p}{Ak} \geq \varphi^{-1}\brac{ \frac{1}{2 A} } > 0 ,
\]
implying that $k \leq C_{A,B,\varphi} p$, and concluding the proof. 
\end{proof}

\subsection{Results}

Besides introducing The Program, our main results in this work are as follows:
\begin{enumerate}
\item As a warm-up, we demonstrate in Section \ref{sec:Sudakov} the usefulness of The Program by running an analogous version which yields a new proof of the classical Sudakov Minoration (in fact, an improved version, known to experts). To take care of the Separation Dimension-Reduction step (Part 1), we simply employ the usual Johnson--Lindenstrauss Lemma, while for Parts 2 and 3 we invoke an elementary weak volumetric estimate based on Urysohn's inequality. 
\item In Section \ref{sec:part3}, we establish Part 3 of The Program in full generality, for all (origin-symmetric) log-concave measures $\nu$ and star-bodies $L$ in $\Real^m$. In fact, we obtain the following \emph{regular version} thereof:
\begin{equation} \label{eq:intro-part3}
p \geq m \;\;\; \Rightarrow \;\; \; M(Z_p(\nu) , C t m_q(\nu,L) L) \leq \exp(1 + q+ \frac{p}{t}) \;\;\; \forall t ,q > 0 .
\end{equation}
\item 
In Section \ref{sec:full-program}, we formulate the full version of The Program, which extends the simplified one presented above in two aspects. First, in Part 1, we allow the packing number after dimension reduction to \emph{drop} by a $D$-th root, where $D\geq 1$ is an additional parameter we introduce; this additional flexibility will be crucial for applying The Program to the case of the cube $K = B_\infty^n$, analyzed in Section \ref{sec:part1-cubes}. Second, we also introduce a \emph{regularity} parameter $t > 0$, whose role is to scale the bodies $K$ and $L$. We prove an analogue of Theorem \ref{thm:program}, stating that establishing Parts 1 and 2 of The (full) Program, together with the regular version of Part 3  established in (\ref{eq:intro-part3}), yields a Generalized Regular Dual Sudakov upper bound on $M(Z_p(\mu) , t m_1(\mu,K) K)$ for all $t > 0$.  This is important for obtaining a regular version of the Generalized Dual Sudakov estimate for ellipsoids in Section \ref{sec:part1-ellipsoids}, which is later used for establishing a Weak Generalized Dual Sudakov estimate (Part 2 of The Program) for more general convex bodies in Section \ref{sec:part2}. 
\item In Section \ref{sec:part2-ellipsoids}, we establish Part 2 of The Program for the case that $L$ is an (origin-symmetric) ellipsoid, by invoking a weak volumetric estimate involving all intrinsic volumes of $Z_p(\nu)$. \item In Section \ref{sec:part1-ellipsoids}, we establish the remaining Part 1 of The Program for the case that $K$ is an ellipsoid, by decoupling the separation dimension-reduction and massiveness requirements using a general probabilistic argument, and applying a small-ball one-sided variant of the Johnson--Lindenstrauss Lemma. Running The Program, we obtain the following estimate: 
\begin{equation} \label{eq:intro-ellipsoids}
M(Z_p(\mu) , t m_1(\mu, \Eps) \Eps) \leq \exp \brac{C \brac{ \frac{p}{t^2} + \frac{p}{t} } } \;\;\; \forall t > 0 ,
\end{equation}
for any (origin-symmetric) ellipsoid $\Eps \subset \Real^n$. We verify in Section \ref{sec:conclude} that for general log-concave measures $\mu$ and ellipsoids $\Eps$, this estimate is best-possible (up to numeric constants) for all $p \in [1,n]$ and $t \geq \sqrt{p/n}$. 
When $\mu$ has identity covariance matrix (``$\mu$ is isotropic") and $\Eps = B_2^n$, we have $m_1(\mu,\Eps) \simeq \sqrt{n}$, and so the estimate (\ref{eq:intro-ellipsoids}) precisely coincides with the one obtained in \cite{GPV-ImprovedPsi2} for \emph{isotropic} log-concave measures and Euclidean balls. An alternative proof of this particular case was obtained in \cite{GPV-DistributionOfPsi2} using a very similar approach to the one we employ in this work, namely self-improving a weak Sudakov Minoration estimate via dimension-reduction. In the isotropic case, further improved packing estimates (for an appropriate range of $p,t$) were obtained in \cite[Subsection 3.3]{EMilman-IsotropicMeanWidth}. However, we do not know how to extend the approaches of \cite{GPV-ImprovedPsi2,EMilman-IsotropicMeanWidth} to the general non-isotropic case so that our sharp estimate (\ref{eq:intro-ellipsoids}) is recovered (see Subsection \ref{subsec:conclude-ell} for more details). 
\item In Section \ref{sec:pure}, we introduce the class of $h$-pure log-concave probability measures $\mu$, which includes several important sub-families, such as unconditional, sub-Gaussian and super-Gaussian log-concave measures. In particular, a log-concave measure is called $1$-pure if all of its lower-dimensional marginals have uniformly bounded isotropic constant (see Section \ref{sec:pure}). A regular packing estimate for $M(B_2^n, t Z_n(\mu))$ when $\mu$ is an isotropic $1$-pure log-concave probability measure was obtained by Giannopoulos--Milman in \cite{GiannopoulosEMilman-IsotropicM}, and we extend it here to general $h$-pure measures, as it plays an important role in the subsequent section. 
\item In Section \ref{sec:part2}, we use the previous results to establish Part 2 of The Program in a variety of scenarios, such as when the log-concave measure $\nu$ is assumed $h$-pure, or when $Z_m(\nu)$ or $L \in \L_m$ are assumed to have regular small-diameter, such as for type-2 convex bodies, sub-Gaussian convex bodies or unconditional convex bodies with small-diameter, and in particular for $\ell_q^m$ unit-balls, $q \in [2,\infty]$. In view of Remark \ref{rem:part2}, this confirms the Generalized Dual Sudakov Minoration conjecture for such $\nu$ and $L$ up to non-trivial, but unfortunately dimension-dependent, constants. In particular, assuming a positive answer to the Slicing Problem (see Section \ref{sec:pure}), we confirm the conjecture up to non-trivial dimension-dependent constants, a highly non-trivial challenge which constitutes one of the main results of this work. 
\item In Section \ref{sec:part1-cubes} we establish Part 1 of (the full version of) The Program for $K = B_\infty^n$, the $n$-dimensional cube, with additional logarithmic terms in the dimension. Running The Program, this yields for all $p \geq 1$ and $t > 0$:
\begin{equation} \label{eq:intro-cubes}
M(Z_p(\mu) , t C \log \log (e+n) m_1(\mu, B_\infty^n) B_\infty^n) \leq \exp \brac{C \log(e+n) \brac{ \frac{p}{t^2} + \frac{p}{t} } } .
\end{equation}
In Section \ref{sec:conclude}, we verify that for general log-concave measures $\mu$ and up to the above logarithmic terms, this estimate is best-possible (up to numeric constants) for all $p \in [1,n]$ and $t \geq \min(1,\sqrt{p/n^\alpha})$, for any fixed $\alpha \in (0,1)$. 
Removing these logarithmic terms would establish the Generalized Dual Sudakov conjecture in full generality, since any origin-symmetric convex-body $K \subset \Real^n$ may be approximated by an $n$-dimensional section of $B_\infty^N$ as $N \rightarrow \infty$ (in fact, using $N = e^n$ would be enough).
A similar argument verifies that (\ref{eq:intro-cubes}) also holds with $B_\infty^n$ replaced by any origin-symmetric polytope with $n^\beta$ facets, for any fixed $\beta \geq 1$ (see Corollary \ref{cor:RegularSudakovPolytopes}). 
So from an optimistic perspective, we are only $\log N$ far from establishing the conjecture, where $N$ is the dimension of the cube into which $K$ (isomorphically) embeds. 
 \end{enumerate}

In Section \ref{sec:conclude} we present some further concluding remarks. 

\medskip
\noindent
\textbf{Acknowledgements.} We thank the anonymous referee for the meticulous reading of the manuscript and for providing many useful comments.

\section{Notation} \label{sec:prelim}

We work in Euclidean space $(\Real^n,\abs{\cdot})$, where $\abs{\cdot}$ denotes the standard Euclidean norm. The Euclidean unit-ball is denoted by $B_2^n$ and the Euclidean unit-sphere by $S^{n-1}$. We also use $\abs{A}$ to denote the volume (or Lebesgue measure) of a Borel set $A \subset \Real^n$ in its $m$-dimensional affine hull (there will be no ambiguity with this standard double role of $\abs{\cdot}$); the volume-radius of $A$ is then defined as $\vrad(A) := (\abs{A} / \abs{B_2^m})^{1/m}$. It is well-known that $\abs{B_2^m}^{1/m} \simeq 1 / \sqrt{m}$. 

The Grassmannian of all $m$-dimensional linear subspaces of $\Real^n$ is denoted by $G_{n,m}$, $m=1,\ldots,n$. All homogeneous spaces $G$ of the group of rotations $SO(n)$ are equipped with their Haar probability measures $\sigma_G$, and in particular $\sigma = \sigma_{S^{n-1}}$ denotes the corresponding Haar probability measure on $S^{n-1}$. Given $F \in G_{n,m}$, we denote by $P_F$ the orthogonal projection onto $F$, and set $B_2(F) := B_2^n \cap F$ and $S(F) := S^{n-1} \cap F$. Given a Borel measure $\mu$ on $\Real^n$, its marginal $\pi_F \mu$ is defined as the push-forward $(P_F)_*(\mu) = \mu \circ P_F^{-1}$. A consequence of the Pr\'ekopa--Leindler celebrated extension of the Brunn--Minkowski inequality (e.g. \cite{GardnerSurveyInBAMS}), is that the marginal $\pi_F \mu$ of a log-concave measure $\mu$ is itself log-concave on $F$.

The support function of a compact set $L$ is defined as $h_L(\theta) := \max \set{\scalar{x,\theta} ; x \in L}$, $\theta \in S^{n-1}$.
Recall that a star-body $L \subset \Real^n$ is a compact set containing the origin in its interior so that $t L \subset L$ for all $t \in [0,1]$. We denote $\norm{x}_L := \min \set{t > 0 ; x \in t L}$. The radial function $\rho_L(\theta)$ is defined as $1/\norm{\theta}_L$ for $\theta \in S^{n-1}$.  When $K$ is an origin-symmetric convex body, $\norm{\cdot}_K$ is a genuine norm whose unit-ball is precisely $K$, and its support function coincides with the dual-norm $h_K(\theta) = \norm{\theta}_K^*$. The Minkowski sum of two compact sets $A,B \subset \Real^n$ is defined as the compact set $A + B := \set{ a+b \; ; \; a \in A ~,~ b \in B}$, and satisfies $h_{A+B} = h_A + h_B$. We will write $L_1 \simeq L_2$ if $c L_2 \subset L_1 \subset C L_2$ for some universal constants $c,C >0$. 

\subsection{Quantiles of log-concave probability measures}

Given an absolutely continuous probability measure $\mu$ on $\Real^n$, a star-body $L\subset \Real^n$ and $q > 0$, recall that:
\[
m_q(\mu,L) := \sup \set{ s > 0 ; \mu( s L) \leq e^{-q} } ,
\]
so that $\mu(m_q(\mu,L) L) = e^{-q}$. In addition, given $q > -1$, we define:
\[
I_q(\mu,L) := \brac{\int \norm{x}_L^q d\mu(x)}^{1/q} . 
\]
 
\begin{lem} \label{lem:Guedon}
Let $K$ be an origin-symmetric convex body and let $\mu$ denote a log-concave probability measure on $\Real^n$. Then for all $q \geq 1$: 
\[
c e^{-q} m_1(\mu,K) \leq m_q(\mu,K) \leq m_1(\mu,K) \simeq I_1(\mu,K) \leq I_q(\mu,K) \leq C q I_1(\mu,K) .
\]
\end{lem}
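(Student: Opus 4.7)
The chain of inequalities decomposes into five separate statements, and my plan is to verify them one by one using standard tools for log-concave measures and the seminorm $\|\cdot\|_K$.

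Two of the inequalities are immediate: $m_q\le m_1$ holds by definition for $q\ge 1$ since $e^{-q}\le e^{-1}$ forces the sup defining $m_q$ to be taken over a subset of that defining $m_1$, and $I_1\le I_q$ is Jensen. The reverse H\"older bound $I_q\le C q\, I_1$ is the classical Kahane--Khinchine inequality of Borell for seminorms under a log-concave probability, obtained by applying Borell's lemma to the symmetric convex sublevel set $\{x:\|x\|_K\le C I_1(\mu,K)\}$ (which has $\mu$-measure at least $1/2$ by Markov) and integrating the exponential tail it produces.

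For the middle equivalence $m_1\simeq I_1$, Markov applied to $\|\cdot\|_K$ yields $\mu(e I_1\,K)\ge 1-1/e > 1-e^{-1}$, so $m_1\le e I_1$. For the reverse direction, write $I_1=\int_0^\infty \mu((sK)^c)\,ds$, split the integral at $s=m_1$, bound the first piece trivially by $m_1$, and handle $[m_1,\infty)$ via Borell's exponential tail applied to a level $r_0\le C' m_1$ with $\mu(r_0 K)\ge 3/4$; such an $r_0$ is produced via Paley--Zygmund combined with the already-established bound $I_2\le CI_1$.

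The remaining and principal step is the left-hand bound $m_q\ge c e^{-q}m_1$, which is essentially a repackaging of Gu\'edon's small-ball inequality: for every origin-symmetric convex body $K$ and every log-concave probability $\mu$ on $\Real^n$ one has $\mu(sK)\le C s / m_1(\mu,K)$ for all $s\in[0,m_1(\mu,K)]$. Substituting $s=c e^{-q}m_1$ with $c$ chosen sufficiently small yields $\mu(sK)\le e^{-q}$, and by the definition of $m_q$ this gives $m_q\ge c e^{-q}m_1$. The main technical obstacle is Gu\'edon's small-ball inequality itself: it is \emph{not} a consequence of the one-dimensional log-concavity of $s\mapsto \mu(sK)$, which in Euclidean settings allows polynomial decay as rapid as $s^n$. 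Its proof genuinely exploits the symmetry of $K$ and the full log-concavity of $\mu$ on $\Real^n$, typically via a slicing argument that reduces the behavior near the origin to that of a one-dimensional log-concave density with a controlled value at zero. With this ingredient in place the remaining pieces are textbook, and the lemma follows by assembling them.
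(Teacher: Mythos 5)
Four of your five pieces are fine and essentially match the paper's proof: $m_q\le m_1$ by definition, $I_1\le I_q$ by Jensen, $I_q\le CqI_1$ by Borell's lemma, $m_1\le \frac{e}{e-1}I_1$ by Markov (note the typo there: you need $\mu(sK)>e^{-1}$, not $>1-e^{-1}$, to conclude $s\ge m_1$; as written your strict inequality $1-1/e>1-e^{-1}$ is between two equal quantities), and $m_q\ge ce^{-q}m_1$ from Gu\'edon's small-ball inequality, which is exactly the paper's route.

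The genuine gap is in the remaining direction $I_1(\mu,K)\le C\,m_1(\mu,K)$. Your plan requires a radius $r_0\le C'm_1$ with $\mu(r_0K)\ge 3/4$, i.e.\ an \emph{upper} bound on the $3/4$-quantile of $\norm{\cdot}_K$ in terms of the $e^{-1}$-level quantile $m_1$. Paley--Zygmund combined with $I_2\le CI_1$ cannot produce this: it yields $\mu\brac{\set{\norm{x}_K>\theta I_1}}\ge (1-\theta)^2 I_1^2/I_2^2$, which is an upper bound on the measure of balls (equivalently a \emph{lower} bound on quantiles in terms of $I_1$) --- the wrong direction, and phrased in terms of $I_1$ rather than $m_1$, which is circular for the inequality you are proving. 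Moreover, even reinterpreted optimally, second moments cannot reach the $e^{-1}$ level: since $I_2/I_1\ge\sqrt2$ already for an exponential variable, Paley--Zygmund gives at best $\mu(\theta I_1 K)\le 1-(1-\theta)^2/2\ge 1/2>e^{-1}$. The upward quantile comparison you need (from the $e^{-1}$ level to the $3/4$ level) is precisely the nontrivial ``small-ball'' content of the lemma and does not follow from log-concavity of $s\mapsto\mu(sK)$ or from second-moment bounds. The paper closes this by invoking Gu\'edon's negative-moment Kahane--Khintchine inequality $I_1(\mu,K)\le C_qI_q(\mu,K)$ for $q\in(-1,0]$: Markov applied to $\norm{x}_K^{-1/2}$ gives $\mu(sK)\le (s/I_{-1/2})^{1/2}$, hence $m_1\ge e^{-2}I_{-1/2}\ge cI_1$. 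You should replace your Paley--Zygmund step by this negative-moment argument (it belongs to the same circle of ideas as the small-ball inequality you already black-box for the first inequality, so no new machinery is needed).
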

\begin{proof}
The first inequality follows by a Kahane--Khintchine-type inequality for negative moments due to  O.~Gu\'edon \cite{Guedon-extension-to-negative-p}, which asserts that under our assumptions:
\[
\mu(\eps \; m_1(\mu,K) K) \leq 2 \ln (\frac{e}{e-1}) \eps \;\;\; \forall \eps \in [0,1] .
\]
The second inequality is trivial. The inequality $m_1(\mu,K) \leq \frac{e}{e-1} I_1(\mu,K)$ follows directly by the Markov-Chebyshev inequality. The reverse inequality $I_1(\mu,K) \leq C m_1(\mu,K)$ follows again by Markov-Chebyshev in conjunction with the negative moment comparison $I_1(\mu,K) \leq C_q I_{q}(\mu,K)$ for all $q \in (-1,0]$ established in \cite{Guedon-extension-to-negative-p}. The inequality $I_1(\mu,K) \leq I_q(\mu,K)$ is immediate by Jensen's inequality. Finally, the Kahane--Khintchine-type inequality $I_q(\mu,K) \leq C q I_1(\mu,K)$ is a known consequence of Borell's lemma \cite{Borell-logconcave}  (e.g. 
\cite{Milman-Pajor-LK}, \cite[Appendix III]{Milman-Schechtman-Book} or \cite[Theorem 2.4.6]{GreekBook}). 
\end{proof}

\subsection{Centroid Bodies}

Recall that the $L_p$ ($p\geq 1$) centroid-bodies $Z_p(\mu)$ associated to a log-concave probability measure $\mu$ on $\Real^n$ are defined by:
\[
h_{Z_p(\mu)}(\theta) = \brac{\int \abs{\scalar{x,\theta}}^p d\mu(x) }^{1/p} \;\; , \;\; \theta \in S^{n-1} . 
\]
 Note that $T(Z_p(\mu)) = Z_p(T_* \mu)$ for any linear mapping $T$, and in particular 
$P_F Z_p(\mu) = Z_p(\pi_F \mu)$ for all $F \in G_{n,m}$. 
It is well-known that:
\begin{equation} \label{eq:Zpq}
1 \leq p \leq q  \;\; \Rightarrow \;\; Z_p(\mu) \subset Z_q(\mu) \subset C \frac{q}{p} Z_p(\mu) ,
\end{equation}
for some constant $C \geq 1$. The first inequality is simply Jensen's inequality, whereas the second one is due to Berwald \cite{BerwaldMomentComparison}, or may be deduced as in Lemma \ref{lem:Guedon} as a consequence of Borell's Lemma \cite{Borell-logconcave}. In fact, it was noted by Lata{\l}a and Wojtaszczyk \cite[Proposition 3.8]{LatalaJacobInfConvolution} that when $\mu$ is origin-symmetric, one may use $C=1$ above (note that the argument in \cite{LatalaJacobInfConvolution} applies to the entire range $1 \leq p \leq q$). 

\begin{lem} \label{lem:IpWish1}
For any probability measure $\mu$, origin-symmetric convex body $K$ and $p \geq 1$:
\[
Z_p(\mu) \subset I_p(\mu,K) K .
\]
\end{lem}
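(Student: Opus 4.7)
The plan is to verify the containment by comparing support functions. Since both $Z_p(\mu)$ and $I_p(\mu,K) K$ are origin-symmetric (the latter because $K$ is), it suffices to check that $h_{Z_p(\mu)}(\theta) \leq I_p(\mu,K)\, h_K(\theta)$ for every $\theta \in S^{n-1}$, i.e., every $\theta \in \Real^n$ by homogeneity. Recall from the notation section that $h_K(\theta) = \norm{\theta}_K^*$.

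First, by the definition of the dual norm, we have the pointwise bound
\[
\abs{\scalar{x,\theta}} \leq \norm{x}_K \cdot \norm{\theta}_K^*
\]
for every $x, \theta \in \Real^n$. Raising to the $p$-th power and integrating against $\mu$, I would obtain
\[
\int \abs{\scalar{x,\theta}}^p \, d\mu(x) \; \leq \; (\norm{\theta}_K^*)^p \int \norm{x}_K^p \, d\mu(x) \; = \; \brac{I_p(\mu,K)\, \norm{\theta}_K^*}^p .
\]
Taking $p$-th roots and recalling the definition of $h_{Z_p(\mu)}(\theta)$ from the Centroid Bodies subsection yields $h_{Z_p(\mu)}(\theta) \leq I_p(\mu,K)\, h_K(\theta)$, which is the desired inclusion.

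There is no real obstacle here; the statement is essentially a restatement of H\"older/dual-norm duality combined with the definition of the $L_p$-centroid body. The only subtlety worth mentioning is that no log-concavity or symmetry of $\mu$ is needed, only the symmetry of $K$ (to pass from support function inequality to set inclusion via $\norm{\cdot}_K^*$).
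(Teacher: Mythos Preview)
Your proof is correct and follows essentially the same approach as the paper: both compare support functions via the duality inequality $\abs{\scalar{x,\theta}} \leq \norm{x}_K h_K(\theta)$ and integrate. The paper's proof is simply a one-line compression of your argument.
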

\begin{proof}
For all $\theta \in S^{n-1}$:
\[
h^p_{Z_p(\mu)}(\theta) = \int \abs{\scalar{x,\theta}}^p d\mu(x) \leq \int \norm{x}_K^p d\mu(x) \; h_K^p(\theta)  ~.
\]
\end{proof}

\medskip

We denote by $\Cov(\mu)$ the covariance matrix of $\mu$, defined as $\Cov(\mu) := \int x \otimes x \; d\mu(x) - \int x \; d\mu(x) \otimes \int x \; d\mu(x)$. 
We will say that $\mu$ is isotropic if its barycenter is at the origin and $\Cov(\mu)$ is the identity matrix $Id$. 
It is easy to see that by applying an affine transformation, any absolutely continuous probability measure may be brought to isotropic ``position", which is unique up to orthogonal transformations. We will always assume that $\mu$ has barycenter at the origin, so that $\mu$ is isotropic if and only if $Z_2(\mu) = B_2^n$; more generally, we always have $Z_2(\mu) = \Cov(\mu)^{1/2}(B_2^n)$, so that $\abs{Z_2(\mu)} = \abs{B_2^n} (\det \; \Cov(\mu))^{1/2}$ (where we identified between a matrix and its associated linear operator). 

\subsection{Packing and Covering Numbers} \label{subsec:pack-cover}

Recall that given two compact sets $A,B \subset \Real^n$, the packing number $M(A,B)$ of $B$ in $A$ is defined as the maximal integer $M$ so that there exist $\set{x_i}_{i=1,\ldots,M} \subset A$ with $x_i + B$ mutually disjoint (``$\set{x_i}$ are $B$-separated"); note that a more standard definition in the literature is to assume that $x_i - x_j \notin \tilde{B}$, which coincides with our definition if $\tilde{B} = B - B$, yielding a factor of $2$ in $B$ if the latter is origin-symmetric. The covering number $N(A,B)$ of $A$ by $B$ is defined as the minimal integer $N$ so that there exist $\set{x_i}_{i=1,\ldots,N}$ with $A \subset \bigcup_{i=1}^N (x_i + B)$. The following relation between packing-numbers and covering-numbers is well-known (see e.g. \cite[Chapter 4]{AGA-Book-I}):
\[
N(A,B-B) \leq M(A,B) \leq N(A,-B)  .
\]
When $B$ is an origin-symmetric convex body $K$, it follows that:
\begin{equation} \label{eq:cover-pack}
N(A,2K) \leq M(A,K) \leq N(A,K) ,
\end{equation}
and so up to this immaterial factor of $2$, we need not distinguish between packing and covering numbers. 

Note that Lemma \ref{lem:IpWish1} implies that $N(Z_p(\mu) , I_p(\mu,K) K) = 1$. The Dual Generalized Sudakov Conjecture asserts that for log-concave measures, it is possible to replace $I_p(\mu,K)$ by $C I_1(\mu,K)$, and still cover $Z_p(\mu)$ with $\exp(C p)$ copies of $C I_1(\mu,K) K$. 

\medskip

Clearly $M(A,B)$ and $N(A,B)$ are both invariant under simultaneously applying a non-singular linear transformation to both $A$ and $B$, and under translation of $A$ or $B$. 
The following triangle inequality for covering numbers is obvious for all compact $A,B,D$:
\[
N(A,B) \leq N(A,D) N(D,B) .
\]
Note that the following variant also holds for packing numbers:
\begin{lem}
\[
M(A,B) \leq N(A,D) M(D,B) .
\]
\end{lem}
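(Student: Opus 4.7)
The plan is to combine a maximal $B$-packing of $A$ with a minimal $D$-cover of $A$ via a pigeonhole argument. Let $M = M(A,B)$ and fix a witnessing $B$-separated configuration $\set{x_1,\ldots,x_M} \subset A$, i.e. the sets $x_i + B$ are pairwise disjoint. Let $N = N(A,D)$ and fix a covering $A \subset \bigcup_{j=1}^N (y_j + D)$.

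By pigeonhole, there exists an index $j_0$ such that the translate $y_{j_0} + D$ contains at least $\lceil M/N \rceil$ of the points $x_i$; call the corresponding subset $\set{x_i}_{i \in I}$ with $|I| \geq \lceil M/N \rceil$. Consider the shifted points $z_i := x_i - y_{j_0}$ for $i \in I$. By construction $z_i \in D$, and the translates $z_i + B = (x_i + B) - y_{j_0}$ remain pairwise disjoint, since translation by $-y_{j_0}$ preserves disjointness. Thus $\set{z_i}_{i \in I}$ is a $B$-separated collection inside $D$, so its cardinality is bounded above by $M(D,B)$.

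Combining these two bounds yields $\lceil M/N \rceil \leq M(D,B)$, and since $M(D,B)$ is an integer this is equivalent to $M \leq N \cdot M(D,B)$, which is precisely the claimed inequality $M(A,B) \leq N(A,D) M(D,B)$.

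There is no real obstacle here: the argument is the standard packing/covering triangle inequality, and the only subtlety is to notice that the $B$-separation property is translation invariant, so that once one picks the ``most populated'' cover element $y_{j_0}+D$, recentering it at the origin transports the $B$-separated subset inside the reference body $D$ without loss.
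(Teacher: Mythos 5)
Your proof is correct and follows essentially the same route as the paper: both distribute the $B$-separated points among the $D$-cover elements and bound each element's share by $M(D,B)$, the only cosmetic difference being that you extract the most populated cover element by pigeonhole while the paper sums over all of them. No gaps.
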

\begin{proof}
Let $Z$ denote a $B$-separated set in $A$ of cardinality $M$, and assume that $A \subset \bigcup_{i=1}^N (x_i + D)$. Clearly $(Z - x_i) \cap D$ is a $B$-separated subset of $D$, and hence:
\[
M = \# Z \leq \sum_{i=1}^N \# (Z \cap (x_i + D)) \leq N M(D,B) . 
\]
\end{proof}

We will frequently use the following obvious volumetric estimates:
\begin{equation} \label{eq:packing-vol}
\frac{\abs{A}}{\abs{B}} \leq N(A,B) \;\;\; , \;\;\; M(A,B) \leq \frac{\abs{A+B}}{\abs{B}} .
\end{equation}
In particular, when $K \subset \Real^n$ is an origin-symmetric convex body, we have the standard volumetric estimate:
\begin{equation} \label{eq:volumetric}
\brac{\frac{1}{t}}^n \leq N(K , t K) \leq M(K , (t/2) K) \leq \brac{\frac{1+ t/2}{t/2}}^n \leq \brac{1 + \frac{2}{t}}^n  \;\;\; \forall t \in (0,1] . 
\end{equation}
\begin{lem} \label{lem:extend}
Assume that for some compact $A$ and convex body $K$ in $\Real^n$:
\[
 N(A,t K) \leq \exp(n \varphi(t)) \;\;\; \forall t \geq t_0 ,
 \]
 for some function $\varphi : [t_0,\infty) \rightarrow \Real_+$. Then the same estimate holds for all $t > 0$ after defining:
 \[
 \varphi(t) := \varphi(t_0) + \log(1 + (2t_0)/t) \;\; , \;\;  t \in (0,t_0) .
 \]
\end{lem}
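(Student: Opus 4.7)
The plan is to extend the given covering estimate from the range $t \geq t_0$ down to $t \in (0, t_0)$ by routing through the scale $t_0$ via the triangle inequality for covering numbers. Concretely, for $t \in (0,t_0)$ I would write
\[
N(A, tK) \leq N(A, t_0 K) \cdot N(t_0 K, t K).
\]
The first factor is controlled directly by the hypothesis at $t = t_0$, giving $N(A, t_0 K) \leq \exp(n \varphi(t_0))$.

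For the second factor, I would exploit the invariance of covering numbers under simultaneous scaling to rewrite $N(t_0 K, t K) = N(K, (t/t_0) K)$, reducing matters to covering the convex body $K$ by a smaller rescaled copy of itself. Since $t/t_0 \in (0,1]$, the standard volumetric estimate (\ref{eq:volumetric}) applies and yields
\[
N(K, (t/t_0) K) \leq \brac{1 + \frac{2 t_0}{t}}^n = \exp\!\brac{n \log\brac{1 + \frac{2 t_0}{t}}}.
\]
Multiplying the two bounds gives $N(A, tK) \leq \exp\!\left(n\brac{\varphi(t_0) + \log(1 + 2 t_0/t)}\right)$, which is precisely $\exp(n\varphi(t))$ under the extension of $\varphi$ stipulated in the statement.

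There is no genuine obstacle here: the argument is a direct assembly of the triangle inequality for covering numbers and the elementary volumetric self-covering estimate (\ref{eq:volumetric}), both of which are already recorded in Subsection \ref{subsec:pack-cover}. The only small point worth noting is the use of the chain $N(K,(t/t_0)K) \leq M(K,(t/(2t_0))K) \leq (1+2t_0/t)^n$ from (\ref{eq:volumetric}), which accounts for the factor $2$ appearing in the definition of the extended $\varphi$.
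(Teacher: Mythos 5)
Your proof is correct and is essentially identical to the paper's: both apply the triangle inequality $N(A,tK) \leq N(A,t_0K)\,N(t_0K,tK)$ and then bound the second factor by $(1+2t_0/t)^n$ via the volumetric estimate (\ref{eq:volumetric}). Nothing further to add.
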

\begin{proof}
\[
N(A,t K) \leq N(A, t_0 K) N(t_0 K , t K) \leq \exp(n \varphi(t_0)) (1+ (2t_0)/t)^n  .
\]
\end{proof}

\section{Warm Up - Sudakov Minoration via The Program} \label{sec:Sudakov}

In this section, we demonstrate the usefulness of The Program by applying an analogous program which yields a new proof of the Sudakov Minoration Inequality. 
Given a compact set $K \subset \Real^n$, denote (half) the mean-width of $K$ by:
\[
M_{\Real^n}^*(K) = M^*(K) := \int_{S^{n-1}} h_K(\theta) d\sigma(\theta) .
\]

\begin{thm}[Improved Sudakov Minoration] \label{thm:Sudakov}
For any compact $K \subset \Real^n$, one has:
\[
M(K , C t M^*(K) B_2^n) \leq \exp\brac{ \frac{n}{\max(t ,t^2)} } \;\;\; \forall t > 0 ,
\]
for some universal constant $C \geq 1$. 
\end{thm}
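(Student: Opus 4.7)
The plan is to run an analogue of The Program from Section \ref{subsec:TheProgram}, with Euclidean balls playing the role of the ``massive'' body, the classical Johnson--Lindenstrauss lemma (Lemma \ref{lem:JL}) handling Part 1, and Urysohn's inequality providing a single weak packing bound that plays the role of Parts 2 and 3 combined. Concretely, for any compact $A \subset \Real^n$ and $r > 0$, the volumetric packing inequality together with Urysohn applied to $A + rB_2^n$ give
$$M(A, r B_2^n) \leq \frac{|A + rB_2^n|}{|rB_2^n|} \leq \left(1 + \frac{M^*(A)}{r}\right)^n.$$
Taking $A = K$ and $r = t M^*(K)$ already yields $M(K, tM^*(K)B_2^n) \leq (1+1/t)^n \leq \exp(n/t)$, which establishes the theorem when $t \leq 1$, since then $n/\max(t,t^2) = n/t$.

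For $t \geq 1$ the trivial estimate only gives $\exp(n/t)$, too weak by a factor $t$ in the exponent, so I invoke JL to reduce dimension. Let $k := \log M(K, CtM^*(K)B_2^n)$ and pick a $CtM^*(K)B_2^n$-separated family $\{x_i\}_{i=1}^{e^k} \subset K$. Set $m := \min(n, \lceil C_0 k \rceil)$ for a universal $C_0$ to be chosen. For random $F \in G_{n,m}$, Lemma \ref{lem:JL} (at $\eps = 1/2$) guarantees that the dilated orthogonal projection $T_F := \sqrt{n/m}\, P_F$ preserves each $|x_i - x_j|$ up to a factor in $[1/2, 3/2]$ with probability $\geq 3/4$, once $C_0$ is taken large enough. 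Simultaneously, Fubini yields $\mathbb{E}_F M^*(P_F K) = M^*(K)$, so Markov gives $M^*(P_F K) \leq 4 M^*(K)$, equivalently $M^*(T_F K) \leq 4\sqrt{n/m}\, M^*(K)$, with probability $\geq 3/4$. A union bound produces a single good $F$: for it, $\{T_F x_i\} \subset T_F K$ is $(Ct/2) M^*(K) B_2^m$-separated and $M^*(T_F K) \leq 4 \sqrt{n/m}\, M^*(K)$. Applying the weak volumetric step in dimension $m$ gives
$$e^k \leq \left(1 + \frac{c_1 \sqrt{n/m}}{Ct}\right)^m.$$
Taking $m$-th roots, using $m \simeq k$ and $t \geq 1$, and rearranging yields $k \leq c_2\, n/t^2$; absorbing $c_2$ into the prefactor by enlarging the universal constant $C$ concludes the case $t \geq 1$.

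The main obstacle, and where The Program's architecture is essential, is that a single subspace $F$ must simultaneously witness both the approximate isometry on $\binom{e^k}{2}$ pair-differences and the mean-width contraction $M^*(P_F K) \leq 4 M^*(K)$. The decisive gain is the $\sqrt{n/m}$ scaling (rather than $n/m$) of the mean width after random projection: this is precisely what upgrades the naive $\exp(n/t)$ volumetric estimate into the sharp $\exp(n/t^2)$ bound of classical Sudakov. A minor technicality is the corner case $\lceil C_0 k \rceil \geq n$, in which JL offers no genuine reduction; this can only occur when $t$ is bounded above by a universal constant, and in that regime the direct dimension-$n$ volumetric bound $k \leq n \log(1 + 1/(Ct)) \leq n/(Ct)$ already implies $k \leq n/t^2$ provided $C$ is chosen large enough.
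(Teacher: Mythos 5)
Your proposal is correct and follows essentially the same route as the paper's own proof: the weak volumetric bound $M(K,rB_2^n)\leq(1+M^*(K)/r)^n$ via Urysohn for the small-$t$ range, and for large $t$ a Johnson--Lindenstrauss projection at $\eps=1/2$ combined, via a union bound, with the Markov--Chebyshev control of $M^*(P_FK)$ coming from the exact identity $\mathbb{E}_F M^*(P_FK)=M^*(K)$, followed by the weak bound in dimension $m\simeq k$. The only cosmetic differences are where you place the threshold between the two regimes (at $t=1$ with an explicit corner case for $m=n$, versus the paper's threshold at a large constant $C_0$) and the implicit use of $k\geq\log 2$ (the packing number being an integer) to make the union bound close.
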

\begin{rem}
Note that $M^*(K)$ is invariant under taking the convex hull of $K$, and so we may as well assume that $K$ is a convex body above. In that case, it is well-known (e.g. \cite[p. 203]{AGA-Book-I}) and easy to check by polar-integration that $I^*_1(\gamma_n,K) \simeq \sqrt{n} M^*(K)$. Translating the classical Sudakov Minoration stated in the Introduction using the present notation, it asserts that the left-hand-side is majorized by 
$\exp(n / t^2)$. The improved version above for $t \in (0,1)$ is known to experts, and follows from an elementary volumetric argument, reproduced below. 
\end{rem}

The simplest text-book proof of Sudakov Minoration we are aware of is obtained by first establishing a dual version using a covering estimate of Talagrand, and then applying a duality argument due to Tomczak-Jaegermann (see \cite[Chapter 3.3]{LedouxTalagrand-Book},\cite[Chapter 4.2]{AGA-Book-I}). The proof we provide below is very different: we work with the primal version directly; we first establish the easy ``weak" covering estimate $\exp(n / t)$ from elementary volumetric considerations (taking care of the analogues of Part 2 and Part 3 of The Program); and finally self-improve this estimate when $t \geq 1$ by employing dimension reduction (Part 1 of The Program) via the usual Johnson--Lindenstrauss lemma.

\begin{lem}[Weak Sudakov Inequality, folklore] \label{lem:weak-Sudakov}
For any convex body $K \subset \Real^n$:
\[
M(K, t M^*(K) B_2^n) \leq \exp \brac{\frac{n}{t}} \;\;\; \forall t > 0 . 
\]
\end{lem}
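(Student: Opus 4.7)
The plan is to prove this directly from the elementary volumetric packing bound \eqref{eq:packing-vol} combined with Urysohn's inequality and the Brunn--Minkowski inequality, without any dimension reduction, separation, or probabilistic argument. Setting $s := t M^*(K)$, the starting point is
\[
M(K, s B_2^n) \leq \frac{\abs{K + s B_2^n}}{\abs{s B_2^n}} ,
\]
which is immediate: the sets $x_i + s B_2^n$ are disjoint and contained in $K + s B_2^n$.

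Next I would apply Brunn--Minkowski in the form $\abs{K + s B_2^n}^{1/n} \leq \abs{K}^{1/n} + s \abs{B_2^n}^{1/n}$, followed by Urysohn's inequality $\abs{K}^{1/n} \leq M^*(K) \abs{B_2^n}^{1/n}$. Combining these two, one obtains
\[
\abs{K + s B_2^n}^{1/n} \leq (M^*(K) + s) \abs{B_2^n}^{1/n} .
\]
Dividing through by $s \abs{B_2^n}^{1/n} = \abs{s B_2^n}^{1/n}$ and raising to the $n$-th power yields
\[
\frac{\abs{K + s B_2^n}}{\abs{s B_2^n}} \leq \brac{1 + \frac{M^*(K)}{s}}^n = \brac{1 + \frac{1}{t}}^n \leq \exp\brac{\frac{n}{t}} ,
\]
using $1 + x \leq e^x$ in the final step. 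This gives the desired bound.

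There is no real obstacle here: both Urysohn and Brunn--Minkowski are completely standard, and the volumetric packing estimate has already been recorded as \eqref{eq:packing-vol}. The only point worth double-checking is the direction of Urysohn's inequality (it is the mean width that dominates the volume radius, not the other way around), and the fact that the estimate $\abs{s B_2^n}^{1/n} = s\abs{B_2^n}^{1/n}$ is used to cancel the factor of $\abs{B_2^n}^{1/n}$ on both sides so that no dependence on the dimensional constant $\abs{B_2^n}^{1/n} \simeq 1/\sqrt n$ survives. This is precisely the elementary argument alluded to in the remark preceding the lemma, and it is sharp in its regime (large $t$ requires the subsequent Johnson--Lindenstrauss-based self-improvement carried out in Theorem \ref{thm:Sudakov}).
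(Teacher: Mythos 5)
There is a genuine gap in your second step. The inequality you invoke,
\[
\abs{K + s B_2^n}^{1/n} \leq \abs{K}^{1/n} + s \abs{B_2^n}^{1/n} ,
\]
is the \emph{reverse} of the Brunn--Minkowski inequality: Brunn--Minkowski states $\abs{A+B}^{1/n} \geq \abs{A}^{1/n} + \abs{B}^{1/n}$, i.e.\ it gives a \emph{lower} bound on the volume of a Minkowski sum, whereas you need an upper bound. The upper bound you wrote is false in general -- take $K$ to be a very thin, long box (so that $\abs{K}^{1/n}$ is tiny) and note that $\abs{K + s B_2^n}$ remains large because the sum inherits the diameter of $K$; no ``reverse Brunn--Minkowski'' holds without placing the bodies in a special (M-)position at the cost of an exponential constant. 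So as written, the key volume estimate is unjustified.

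The repair is short and is exactly what the paper does: instead of splitting $\abs{K + sB_2^n}$ via Brunn--Minkowski and then applying Urysohn to $K$ alone, apply Urysohn's inequality \emph{directly to the body $K + s B_2^n$} and use the linearity of the mean width under Minkowski addition, $M^*(K + s B_2^n) = M^*(K) + s M^*(B_2^n) = M^*(K) + s$. This gives
\[
\frac{\abs{K + s B_2^n}}{\abs{s B_2^n}} = \brac{\frac{\vrad(K + s B_2^n)}{s}}^n \leq \brac{\frac{M^*(K) + s}{s}}^n = \brac{1 + \frac{1}{t}}^n \leq \exp\brac{\frac{n}{t}}
\]
for $s = t M^*(K)$, which is the chain you wanted. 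Everything else in your argument (the packing-by-volume bound, the cancellation of $\abs{B_2^n}^{1/n}$, the final $1+x \leq e^x$) is fine.
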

\begin{proof}
By linearity of the support functions we have $M^*(K + t L) = M^*(K) + t M^*(L)$ for all $t \geq 0$. We invoke Urysohn's inequality (e.g. \cite{GiannopoulosMilmanHandbook}, \cite[Chapter 6]{Schneider-Book}), which states that $\volrad(K) \leq M^*(K)$. 
Coupled with the standard volumetric covering estimate (\ref{eq:packing-vol}), we obtain:
\[
M(K, t M^*(K) B_2^n) \leq \frac{\abs{K + t M^*(K) B_2^n}}{\abs{t M^*(K) B_2^n}} \leq \brac{\frac{M^*(K + t M^*(K) B_2^n)}{t M^*(K)}}^n = \brac{\frac{1+t}{t}}^n \leq \exp \brac{\frac{n}{t}} . 
\]
\end{proof}

\begin{lem}[Johnson--Lindenstrauss Lemma \cite{JohnsonLindenstraussLemma}] \label{lem:JL}
Let $F \in G_{n,m}$ be a random $m$-dimensional subspace of Euclidean space $(\Real^n,\abs{\cdot})$ distributed according to the Haar probability measure on $G_{n,m}$, $m=1,\ldots,n$, and let $P_F$ denote the orthogonal projection onto $F$. Then for all $x \in S^{n-1}$:
\begin{enumerate}
\item
\[
\P\brac{ \abs{\sqrt{n/m}\abs{P_F x} - 1} \geq \eps } \leq C \exp(- c m \eps^2) \;\;\; \forall \eps > 0 .
\] 
\item
Let $\set{x_i}_{i=1,\ldots, M} \subset \Real^n$ be a collection of (say distinct) points. Then:
\[
\P \brac{  1-\eps \leq \frac{\sqrt{n/m} \abs{P_F x_i - P_F x_j}}{\abs{x_i - x_j}} \leq 1+ \eps \;\;\; \forall i \neq j } \geq 1 - {M \choose 2} C \exp(- c m \eps^2) .
\]
\end{enumerate}
\end{lem}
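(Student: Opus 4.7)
The plan is to prove (1) by a concentration-of-measure argument on the sphere and then deduce (2) by a union bound.

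For (1), rotational invariance of the Haar measure on $G_{n,m}$ implies that for any fixed $x \in S^{n-1}$ and uniformly random $F \in G_{n,m}$, the random variable $\abs{P_F x}$ has the same distribution as $\abs{P_{F_0} X}$, where $F_0$ is any fixed $m$-dimensional subspace of $\Real^n$ and $X \sim \sigma$. Thus I would reduce the problem to concentration of the function $f(X) := \abs{P_{F_0} X}$ on $S^{n-1}$. This $f$ is $1$-Lipschitz, being a composition of a $1$-Lipschitz linear map with the Euclidean norm. Choosing an orthonormal basis of $F_0$ and using the standard identity $\int \scalar{x,e}^2 d\sigma(x) = 1/n$, one has $\int f^2 d\sigma = m/n$, and by a routine computation (or by Jensen combined with the concentration one is about to prove) the median satisfies $\abs{\mathrm{med}(f) - \sqrt{m/n}} \leq C'/\sqrt{n}$.

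Now I would apply L\'evy's concentration inequality on the sphere: for any $1$-Lipschitz $f : S^{n-1} \to \Real$,
\[
\sigma\brac{\set{\abs{f - \mathrm{med}(f)} \geq s}} \leq C \exp(-c n s^2) \;\;\; \forall s > 0 .
\]
Plugging in $s = \eps \sqrt{m/n}$ and absorbing the $O(1/\sqrt{n})$ discrepancy between $\mathrm{med}(f)$ and $\sqrt{m/n}$ into the constants (legitimate because the statement is vacuous when $\eps \lesssim 1/\sqrt{m}$, so we may assume $\eps \gtrsim 1/\sqrt{m}$ and the discrepancy is dominated by $\eps \sqrt{m/n}$), one obtains
\[
\P\brac{\abs{\abs{P_F x} - \sqrt{m/n}} \geq \eps \sqrt{m/n}} \leq C \exp(-c m \eps^2) ,
\]
which after multiplying by $\sqrt{n/m}$ is exactly the desired estimate in (1).

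For (2), given the points $\set{x_i}$, fix a pair $i \neq j$ and apply (1) to the unit vector $(x_i - x_j)/\abs{x_i - x_j}$; using the linearity of $P_F$ this yields the event in (2) for that particular pair with probability $\geq 1 - C \exp(-c m \eps^2)$. A union bound over all $\binom{M}{2}$ pairs concludes the argument. The only delicate step is the tight control of the median of $f$; an entirely calculation-free alternative is to note that $\abs{P_F x}^2 \stackrel{d}{=} Y/(Y+Z)$ with $Y \sim \chi^2_m$ and $Z \sim \chi^2_{n-m}$ independent, and then invoke Laurent--Massart tail bounds for the two $\chi^2$ variables separately, which directly gives the two-sided deviation estimate without passing through a median.
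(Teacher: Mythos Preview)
Your argument is correct and follows essentially the same approach as the paper's proof sketch: concentration on the sphere for the $1$-Lipschitz function $x \mapsto \abs{P_{F_0} x}$ gives (1), and then linearity plus a union bound over the $\binom{M}{2}$ pairs yields (2). You have simply filled in more detail than the paper provides, including the rotational-invariance reduction, the computation of $\int f^2 d\sigma$, the handling of the median discrepancy, and the alternative $\chi^2$ route.
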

\begin{proof}[Proof Sketch]
The first assertion follows from concentration on the sphere and the fact that for a fixed $F_0 \in G_{n,m}$, $S^{n-1} \ni x \mapsto \abs{P_{F_0} x}$ is a $1$-Lipschitz function. The second part follows immediately from the first part, linearity, and the union-bound. 
\end{proof}

\begin{proof}[Proof of Theorem \ref{thm:Sudakov}]
When $t \in (0,C_0]$, where $C_0 \geq 1$ is a large-enough constant to be determined, the assertion follows from Lemma \ref{lem:weak-Sudakov}. When $t \geq C_0$, we proceed as follows. Set:
\[
e^k := M(K, t M^*(K) B_2^n) ,
\]
Lemma \ref{lem:weak-Sudakov} ensures that $k \leq n/C_0$, and since the packing number is an integer, we may assume that $k \geq \log 2$ (otherwise $k=0$ and there is nothing to prove). Let $\set{x_i}_{i=1,\ldots,e^k}$ denote the maximal collection of points in $K$ which are $t M^*(K) B_2^n$-separated. By the Johnson--Lindenstrauss Lemma \ref{lem:JL}, we may choose $C_0$ large enough so that setting $m := \lceil C_0 k \rceil \in [C_0 \log 2 ,  n]$, an orthogonal projection $P_F$ onto a randomly selected $F \in G_{n,m}$ with respect to its Haar probability measure $\sigma_{G_{n,m}}$, will satisfy with probability at least $1 - {e^k \choose 2} C\exp(-c m (1/2)^2) > 1/2$ that:
\begin{equation} \label{eq:Sudakov-union1}
 \text{$\set{P_F(x_i)}$ are $\frac{1}{2} t M^*(K) \sqrt{\frac{m}{n}} B_2(F)$-separated} .
\end{equation}
In addition, since $h_{P_F K} = h_K|_ F$, note that:
\begin{align*}
M^*(K) = \int_{S^{n-1}} h_K(\theta) d\sigma_{S^{n-1}}(\theta) & = \int_{G_{n,m}} \int_{S(F)} h_K(\theta) d\sigma_{S(F)}(\theta) d\sigma_{G_{n,m}}(F) \\
& =  \int_{G_{n,m}} M^*_F(P_F K) d\sigma_{G_{n,m}}(F) ,
\end{align*}
and so by the Markov--Chebyshev inequality, 
\begin{equation} \label{eq:Sudakov-union2}
M^*_F(P_F K) \leq 2 M^*(K) 
\end{equation}
with probability at least $1/2$ (in fact, it follows by a result of Klartag--Vershynin \cite[Section 3]{Klartag-Vershynin} that this holds with much higher probability, but this is not required here). By the union bound, it follows that there exists a subspace $F \in G_{n,m}$ for which both (\ref{eq:Sudakov-union1}) and (\ref{eq:Sudakov-union2}) hold. Hence, applying Lemma \ref{lem:weak-Sudakov} to $P_F K$ in $F \in G_{n,m}$:
\[
e^k \leq  M\brac{P_F K , \frac{1}{4} t M^*_F(P_F K) \sqrt{\frac{m}{n}} B_2(F)} \leq \exp \brac{4 \frac{m}{t \sqrt{m/n}}} .
\]
Using that $m \leq C_0 k + 1 \leq (C_0 + 1/\log(2)) k$ and solving for $k$, we obtain:
\[
k \leq C' \frac{n}{t^2} ,
\]
and hence:
\[
M(K, t M^*(K) B_2^n) = e^k \leq \exp\brac{C' \frac{n}{t^2}} \;\;\; \forall t \geq C_0 ,
\]
concluding the proof. 

\end{proof}

\section{Part 3 - the case $p \geq n$} \label{sec:part3}

In this section we establish Part 3 of The Program. In fact, we will establish the following regular version thereof, in preparation for introducing the full version of The Program in the next section. 

\begin{thm} \label{thm:part3}
Let $\mu$ denote an origin-symmetric log-concave probability measure on $\Real^n$, and let $L \subset \Real^n$ denote a star-body. Then for any $p \geq n$, we have:
\[
M(Z_p(\mu) , C t m_q(\mu,L) L) \leq \exp(1 + q + \frac{p}{t}) \;\;\; \forall t ,q > 0 \;
\] 
In particular, if $\mu(L) \geq e^{-p}$ with $p \geq n$ then $M(Z_p(\mu) , C L) \leq \exp(3 p)$. \end{thm}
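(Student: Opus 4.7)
The plan is a direct volumetric argument. The key bound to establish is
\begin{equation*}
|Z_p(\mu)|^{1/n} \leq C_2 \frac{p}{n}\, e^{q/n}\, m_q(\mu, L)\, |L|^{1/n}, \qquad p \geq n,
\end{equation*}
which I would derive by combining three classical facts: (i) the Berwald--Lata{\l}a--Wojtaszczyk inclusion $Z_p(\mu) \subset C_0(p/n) Z_n(\mu)$ from (\ref{eq:Zpq}); (ii) the volumetric estimate $|Z_n(\mu)|^{1/n} \leq C_1 \|f_\mu\|_\infty^{-1/n}$ for symmetric log-concave $\mu$, a classical consequence of K.~Ball's convex bodies $K_n(\mu)$ satisfying $|K_n(\mu)| = \|f_\mu\|_\infty^{-1}$ together with the standard inclusion relating $Z_n(\mu)$ and $K_n(\mu)$; and (iii) the elementary Markov-type estimate $e^{-q} = \mu(m_q(\mu, L) L) \leq \|f_\mu\|_\infty \cdot m_q^n |L|$, which yields $\|f_\mu\|_\infty^{-1/n} \leq e^{q/n} m_q(\mu, L) |L|^{1/n}$.

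Next, assuming $L$ is convex (the general star-body case requires a minor extension, e.g.~by passing through the convex hull or an inscribed convex symmetric body), combine the standard packing-volume estimate (\ref{eq:packing-vol}) with Brunn--Minkowski, and set $r := C t\, m_q(\mu, L)$ with $C := e C_2$, to obtain
\begin{equation*}
M(Z_p(\mu), rL) \leq \left(1 + \frac{|Z_p(\mu)|^{1/n}}{r|L|^{1/n}}\right)^n \leq (1 + x)^n, \qquad x := \frac{p\, e^{q/n}}{e n t}.
\end{equation*}
It remains to verify $(1+x)^n \leq \exp(1 + q + p/t)$. Split on $x$: when $x \leq 1$, use $\log(1+x) \leq x$ so that $\log M \leq p e^{q/n}/(et)$, and treat separately $q \leq n$ (in which case $e^{q/n} \leq e$, giving $\log M \leq p/t$) and $q > n$ (in which case $x \leq 1$ forces $e^{q/n} \leq e n t/p$, hence $\log M \leq n/e < q$). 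When $x > 1$, use $\log(1+x) \leq \log(2x)$ so that $\log M \leq q + n\log(2p/(ent))$, and apply the elementary $\log z \leq z - 1$ with $z = 2p/(ent)$ to reduce the required inequality to $(2/e - 1)(p/t) \leq 1 + n$, automatic since $2/e < 1$.

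The ``in particular'' statement follows by specializing to $q = p$ and $t = 1$: the hypothesis $\mu(L) \geq e^{-p}$ forces $m_p(\mu, L) \leq 1$, so $C m_p(\mu, L) L \subseteq CL$, and the main estimate yields $M(Z_p(\mu), CL) \leq M(Z_p(\mu), C m_p(\mu, L) L) \leq \exp(1 + 2p) \leq e^{3p}$ for $p \geq 1$. The main obstacle, in my view, is the unconditional justification of the volumetric inequality $|Z_n(\mu)|^{1/n} \leq C_1 \|f_\mu\|_\infty^{-1/n}$ (classical but requiring careful sourcing in the symmetric log-concave setting without appealing to the slicing conjecture) and a clean extension from convex bodies to genuine star-bodies.
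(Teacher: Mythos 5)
Your proposal has a fatal gap at the packing step. The estimate
\[
M(Z_p(\mu),rL)\;\leq\;\frac{\abs{Z_p(\mu)+rL}}{\abs{rL}}\;\leq\;\brac{1+\frac{\abs{Z_p(\mu)}^{1/n}}{r\abs{L}^{1/n}}}^n
\]
requires $\abs{A+B}\leq\bigl(\abs{A}^{1/n}+\abs{B}^{1/n}\bigr)^n$, which is the \emph{reverse} of the Brunn--Minkowski inequality and is false in general. (Take $A=\sqrt{n}B_2^n$ and $B$ a very long, very thin box of unit volume: $\abs{A+B}$ grows without bound as the box is elongated, while the right-hand side stays fixed.) This is not a technicality but the heart of the matter: volume ratios give \emph{lower} bounds on covering numbers, never upper bounds, unless the two bodies are related by more than their volumes --- e.g.\ via a common homothety class, or via mean-width control as in the Urysohn-based Lemma \ref{lem:weak-Sudakov} of the paper (which crucially exploits that one of the bodies is a Euclidean ball and that $M^*$ is additive under Minkowski sums). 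Your preliminary volumetric bound on $\abs{Z_p(\mu)}^{1/n}$ via Ball's bodies and the density estimate is correct, but it simply cannot be converted into a packing bound against an arbitrary star-body $L$ by volume comparison alone.

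The paper circumvents exactly this obstruction with Talagrand's shift argument (Proposition \ref{prop:Talagrand}): one packs the convex body $K_n(\mu)$ rather than estimating volumes, using the auxiliary measure $\mu_K=\frac{1}{n!\abs{K}}e^{-\norm{x}_K}dx$ and the fact that disjoint translates $\frac{s}{r}z_i+sL$ each carry $\mu_K$-mass at least $e^{-s/r}\mu_K(sL)$; this yields $M(K,rL)\leq e^{s/r}/\mu_K(sL)$ with no volume of $K$ appearing at all. Only afterwards does one pass from $K_n(\mu)$ to $Z_n(\mu)$ (and then to $Z_p(\mu)$ via (\ref{eq:Zpq})). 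If you want to repair your argument you would need to replace the reverse Brunn--Minkowski step by such a shift argument, at which point the volumetric bound on $\abs{Z_p(\mu)}$ becomes unnecessary. Your reduction of the ``in particular'' statement (taking $q=p$, $t=1$ and using $m_p(\mu,L)\leq 1$) is fine.
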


\medskip

For the proof, we require a bit of preparation, emphasizing that $L$ need not be convex but only be star-shaped, which might be useful for establishing Part 1 of The Program (as indicated by some preliminary attempts we do not describe here). We start with the following variation on Talagrand's proof of the dual Sudakov Minoration (e.g. \cite[Chapter 3.3]{LedouxTalagrand-Book},\cite[Chapter 4.2]{AGA-Book-I}), which was already used by Hartzoulaki in her PhD Thesis \cite{Hartzoulaki-PhD} and subsequently employed by other authors as well (cf. \cite{LitvakMilmanPajor-QuasiConvex,GPV-ImprovedPsi2}). 

\medskip

Recall that $\lambda_K$ denotes the uniform probability measure on the convex body $K$.

\begin{prop} \label{prop:Talagrand}
Let $K$ denote a convex body and let $L$ denote a star-body in $\Real^n$. Then:
\[
M(K , 2 t m_q(\lambda_K , L) L) \leq \exp(1 + q + \frac{n}{t}) \;\;\; \forall q, t > 0 . 
\]
\end{prop}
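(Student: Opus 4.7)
The plan is to run a Talagrand-style ``fattening'' argument, adapted here to the uniform measure $\lambda_K$ on $K$ and to the merely star-shaped body $L$. Given a maximal $2tsL$-separated family $\{x_i\}_{i=1}^{M} \subset K$ with $s := m_q(\lambda_K,L)$ and $M = M(K, 2tsL)$, the aim is to replace each $x_i$ by a ``fat'' set $U_i \subset K$ such that the $U_i$ remain pairwise disjoint and each carries $\lambda_K$-mass equal to $(1-\lambda)^n e^{-q}$; the volume inequality $\sum_i \lambda_K(U_i) \leq 1$ will then bound $M$ from above.

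For a parameter $\lambda \in (0,1)$ to be chosen, I would define
\[
U_i := \lambda x_i + (1-\lambda)\bigl(sL \cap K\bigr).
\]
Since $x_i \in K$, $sL \cap K \subset K$, and $K$ is convex, each $U_i \subset K$ automatically. Intersecting $sL$ with $K$ (rather than using the full $sL$, which may spill out of $K$) is the technical point that makes the argument work without any a priori relation between $sL$ and $K$. For disjointness, note that $U_i \cap U_j \neq \emptyset$ for some $i \neq j$ would give
\[
\lambda(x_i - x_j) \in (1-\lambda)\bigl(sL \cap K - sL \cap K\bigr) \subset (1-\lambda)\, s\, (L-L),
\]
hence $x_i - x_j \in \tfrac{(1-\lambda)s}{\lambda}(L-L)$. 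But the separation hypothesis $x_i + 2tsL$ mutually disjoint reads precisely $x_i - x_j \notin 2ts(L-L)$ for $i \neq j$, so imposing $\tfrac{1-\lambda}{\lambda} \leq 2t$, i.e.\ $\lambda \geq \tfrac{1}{1+2t}$, forces pairwise disjointness of the $U_i$.

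Since each $U_i$ is a translate of $(1-\lambda)(sL \cap K)$ and, by the defining property of $s = m_q(\lambda_K,L)$, one has $\lambda_K(sL) = \lambda_K(sL \cap K) = e^{-q}$ (using that $\lambda_K$ is supported on $K$), it follows that $\lambda_K(U_i) = (1-\lambda)^n e^{-q}$. Disjointness inside $K$ now gives
\[
1 \;\geq\; \sum_{i=1}^{M} \lambda_K(U_i) \;=\; M \cdot (1-\lambda)^n e^{-q}.
\]
Choosing $\lambda = \tfrac{1}{1+2t}$ makes $(1-\lambda)^{-n} = (1 + \tfrac{1}{2t})^n \leq e^{n/(2t)}$, whence $M \leq \exp\bigl(q + n/(2t)\bigr)$, which comfortably implies the stated $\exp(1 + q + n/t)$. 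I do not anticipate any real obstacle in this argument; the only delicate point is the intersection with $K$ in the definition of $U_i$ (so that $U_i \subset K$ holds unconditionally), together with the observation that the $\lambda_K$-mass of $sL$ is unaffected by this intersection since $\lambda_K$ lives on $K$.
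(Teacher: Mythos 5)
Your argument is correct, and it takes a genuinely different route from the paper. The paper proves the proposition in two steps: first it runs Talagrand's translation argument with the auxiliary exponential measure $\mu_K = \frac{1}{n!\abs{K}}e^{-\norm{x}_K}dx$, whose quasi-invariance $\mu_K(z+A) \geq e^{-\norm{z}_K}\mu_K(A)$ handles the fact that the disjoint translates $\frac{s}{r}z_i + sL$ need not lie in $K$; it then transfers the resulting bound back to the uniform measure via a separate quantile-comparison lemma, $m_{q-1}(\lambda_K,L) \geq \frac{1}{2n}m_q(\mu_K,L)$, proved by a layer-cake integration. You instead stay with $\lambda_K$ throughout and resolve the ``spilling out of $K$'' issue by the convex-combination fattening $U_i = \lambda x_i + (1-\lambda)(sL\cap K)$, which forces $U_i \subset K$ unconditionally; the disjointness check correctly uses that separation in the paper's convention means $x_i - x_j \notin 2ts(L-L)$ and that $L-L$ is star-shaped about the origin, so the choice $\lambda = \frac{1}{1+2t}$ works. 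Your route is shorter, entirely elementary, avoids the measure $\mu_K$ and the factor-$2n$ quantile comparison, and even yields the slightly sharper bound $\exp(q + \frac{n}{2t})$; what the paper's route buys is the intermediate, dimension-free-in-$t$ estimate $M(K, t\, m_q(\mu_K,L)L) \leq \exp(q + \frac{1}{t})$ for the exponential measure, which is of independent interest, but it is not needed elsewhere for this proposition.
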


For the proof, we will utilize the following auxiliary probability measure on $\Real^n$, which may be associated to any star-body $K \subset \Real^n$:
\[
\mu_K := \frac{1}{n! \abs{K}} e^{-\norm{x}_K} dx . 
\]

\begin{lem}
With the same assumptions as in Proposition \ref{prop:Talagrand}:
\[
M(K , t \; m_q(\mu_K,L)  L) \leq \exp(q + \frac{1}{t}) \;\;\; \forall q, t > 0 . 
\]
\end{lem}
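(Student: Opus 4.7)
The plan is to choose an auxiliary probability measure adapted to the separation scale $t$, apply a simple triangle-inequality estimate to lower bound the measure of each separated translate, and then invoke disjointness.

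First I would write $s := m_q(\mu_K, L)$, so that by definition $\mu_K(s L) = e^{-q}$, and take $\{x_i\}_{i=1,\ldots,M}$ to be a maximal $(t s L)$-separated collection in $K$, with pairwise disjoint translates $x_i + t s L$.

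The key step is to abandon $\mu_K$ in favor of its rescaled variant $\mu_{tK}$, whose density is $f_{tK}(x) = \frac{1}{n!|tK|} e^{-\|x\|_{tK}}$. Under the dilation $z \mapsto t z$ one has $\mu_{tK}(A) = \mu_K(A/t)$ for every Borel set $A$, and in particular $\mu_{tK}(t s L) = \mu_K(s L) = e^{-q}$. Since $x_i \in K$ implies $\|x_i\|_{tK} = \|x_i\|_K / t \leq 1/t$, the triangle inequality for the gauge of the convex body $tK$ gives, for every $u \in t s L$,
\[
\|x_i + u\|_{tK} \leq \|x_i\|_{tK} + \|u\|_{tK} \leq \tfrac{1}{t} + \|u\|_{tK},
\]
and hence $f_{tK}(x_i + u) \geq e^{-1/t} f_{tK}(u)$. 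Integrating over $u \in t s L$ yields
\[
\mu_{tK}(x_i + t s L) \geq e^{-1/t} \, \mu_{tK}(t s L) = e^{-1/t - q},
\]
and summing over the $M$ disjoint translates inside $\mathbb{R}^n$ gives $M \cdot e^{-1/t - q} \leq 1$, i.e.\ $M \leq \exp(q + 1/t)$, which is exactly the claim.

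The one subtle point --- and what I would expect to be the main obstacle --- is the choice of rescaling. Proceeding directly with $\mu_K$ and the same triangle-inequality estimate only produces the weaker bound $M \leq e / \mu_K(t s L)$, which recovers only the $t = 1$ case and even becomes vacuous when $q$ is small and $t$ is large. Introducing the $t$-dependent auxiliary measure $\mu_{tK}$ is what aligns the triangle-inequality loss $e^{-\|x_i\|_{tK}} \geq e^{-1/t}$ with the correct normalization $\mu_{tK}(t s L) = e^{-q}$ at the separating scale, yielding the desired factor $e^{q + 1/t}$. Note that convexity of $L$ plays no role: the argument uses only that $L$ is a star-body (so that $m_q(\mu_K,L)$ is well-defined) and the triangle inequality for the gauge of $tK$.
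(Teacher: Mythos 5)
Your proof is correct and is essentially the paper's argument (Talagrand's classical scheme): lower-bound the measure of each separated translate via the triangle inequality for the gauge, then sum over the disjoint translates. The only difference is cosmetic --- the paper keeps $\mu_K$ and dilates the point configuration ($z_i+rL \mapsto \frac{s}{r}z_i+sL$), while you keep the configuration and dilate the measure to $\mu_{tK}$; since $\mu_{tK}(A)=\mu_K(A/t)$, the two computations are identical after a change of variables.
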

\begin{proof}
Let us show the following equivalent formulation:
\begin{equation} \label{eq:Talagrand0}
M(K , r L) \leq \frac{\exp( s / r)}{\mu_K(s L)} \;\;\; \forall r , s > 0 .
\end{equation}
By definition, there exist $M := M(K,r L)$ points $z_1 , \ldots, z_M \in K$ so that the sets $\set{z_i + r L}$ are mutually disjoint. Hence, for all $s > 0$, the sets $\set{\frac{s}{r} z_i + s L}$ are also mutually disjoint. In addition, by convexity of $K$:
\[
\mu_K\brac{\frac{s}{r} z_i + s L} = \frac{1}{n! \abs{K}} \int_{sL} e^{-\norm{\frac{s}{r} z_i + x}_K} dx \geq \frac{1}{n! \abs{K}} e^{-\frac{s}{r} \norm{z_i}_K} \int_{s L} e^{-\norm{x}_K} dx \geq e^{-\frac{s}{r}} \mu_K(s L) . 
\]
Consequently:
\[
1 \geq \sum_{i=1}^M \mu_K\brac{\frac{s}{r} z_i + s L} \geq M e^{-\frac{s}{r}} \mu_K(s L) ,
\]
establishing (\ref{eq:Talagrand0}), as required. 
\end{proof}

\begin{lem}
For all star-bodies $K,L \subset \Real^n$ and $q > 1$, we have:
\[
m_{q-1}(\lambda_K , L) \geq \frac{1}{2n} m_q(\mu_K, L) .
\]
\end{lem}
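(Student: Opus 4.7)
The plan is to derive a clean integral representation of $\mu_K(sL)$ in terms of $\lambda_K$ evaluated on dilates of $L$, and then extract the desired inequality by combining monotonicity with a $\Gamma$-tail estimate. Starting from the layer-cake identity $e^{-\norm{x}_K} = \int_0^\infty \chi_{tK}(x) e^{-t}\,dt$, applying Fubini and using the scaling relation $\abs{sL \cap tK} = t^n \abs{(s/t)L \cap K} = t^n \abs{K} \lambda_K((s/t)L)$, one obtains
\[
\mu_K(sL) \;=\; \frac{1}{n!}\int_0^\infty e^{-t}\, t^n\, \lambda_K\bigl((s/t)L\bigr)\,dt .
\]
This exhibits $\mu_K(sL)$ as a $\Gamma(n+1)$-average of the non-decreasing function $r \mapsto \lambda_K(rL)$ evaluated at $r = s/t$.

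Since $\lambda_K(rL)$ is non-decreasing in $r$, for every $t \in [0, 2n]$ we have $\lambda_K((s/t)L) \geq \lambda_K((s/(2n))L)$, so restricting the integral to $[0,2n]$ yields
\[
\mu_K(sL) \;\geq\; \lambda_K\bigl((s/(2n))L\bigr) \cdot \frac{1}{n!}\int_0^{2n} e^{-t} t^n \,dt .
\]
The only genuine piece of analysis is then to verify that $\frac{1}{n!}\int_0^{2n} e^{-t} t^n\, dt \geq 1/e$ for every $n \geq 1$; equivalently, that a $\Gamma(n+1,1)$ random variable (which has mean $n+1 \leq 2n$ and mode $n$) puts mass at most $1 - 1/e$ on $[2n, \infty)$. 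This is a routine concentration fact: since $\frac{d}{dt}(n\log t - t) = n/t - 1 \leq -1/2$ on $[2n,\infty)$, we have $t^n e^{-t} \leq (2n)^n e^{-2n} e^{-(t-2n)/2}$ there, whence $\int_{2n}^\infty t^n e^{-t}\,dt \leq 2(2n)^n e^{-2n}$; combined with Stirling's bound $n! \geq \sqrt{2\pi n}(n/e)^n$ this reduces the claim to $\frac{2(2/e)^n}{\sqrt{2\pi n}} \leq 1 - 1/e$, which is easily checked for all $n \geq 1$ (the small cases $n=1,2$ can alternatively be verified by direct integration of the incomplete Gamma function).

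Finally, setting $s := m_q(\mu_K, L)$ so that $\mu_K(sL) = e^{-q}$ and inserting this into the previous inequality yields
\[
\lambda_K\bigl((s/(2n))L\bigr) \;\leq\; e \cdot e^{-q} \;=\; e^{-(q-1)},
\]
which by definition of $m_{q-1}$ gives $m_{q-1}(\lambda_K, L) \geq s/(2n) = m_q(\mu_K, L)/(2n)$, as claimed. There is no serious obstacle here: the whole argument is driven by one integral identity plus monotonicity, and the sole quantitative input is the mild $\Gamma$-tail bound above.
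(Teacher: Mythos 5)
Your proof is correct and follows essentially the same route as the paper: the same polar/Fubini integral identity $\mu_K(sL) = \frac{1}{n!}\int_0^\infty t^n e^{-t}\lambda_K((s/t)L)\,dt$, the same restriction to $[0,2n]$ via monotonicity of $r \mapsto \lambda_K(rL)$, and the same Gamma-tail bound $\frac{1}{n!}\int_0^{2n} t^n e^{-t}\,dt \geq 1/e$ (which the paper merely asserts as standard, while you supply a complete verification).
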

\begin{proof}
For all $s > 0$:
\[
\mu_K(sL) = \frac{1}{n! \abs{K}} \int_{sL} e^{-\norm{x}_K} dx = \frac{1}{n! \abs{K}} \int_0^\infty \abs{t K \cap sL} e^{-t} dt = \frac{1}{n!} \int_0^\infty t^n e^{-t} \lambda_K(\frac{s}{t} L) dt .
\] 
Applying this to $s := m_q(\mu_K,L)$, we obtain:
\[
e^{-q} = \mu_K(s L) = \frac{1}{n!} \int_0^\infty t^n e^{-t} \lambda_K\brac{\frac{s}{t} L} dt \geq \lambda_K\brac{\frac{s}{2n} L} \frac{1}{n!} \int_0^{2n} t^n e^{-t}  dt \geq \frac{1}{e} \lambda_K\brac{\frac{s}{2n} L} ,
\]
where the very rough estimate $\int_0^{2n} t^n e^{-t}  dt \geq \frac{1}{e} \int_0^\infty t^n e^{-t} dt$ is standard and may be easily verified by direct calculation (or e.g. by Markov's inequality when $n \geq 4$). It follows that $\frac{s}{2n} \leq m_{q-1}(\lambda_K , L)$, as asserted. 
\end{proof}

\begin{proof}[Proof of Proposition \ref{prop:Talagrand}]
Applying the previous two lemmas, the proof is immediate:
\[
M(K , 2 t \; m_q(\lambda_K , L)) \leq M\brac{K , \frac{t}{n} m_{q+1}(\mu_K, L)} \leq \exp(1 + q + \frac{n}{t}) . 
\]
\end{proof}

\medskip

One final ingredient we require for the proof of Theorem \ref{thm:part3} involves the following star-body, introduced by K.~Ball \cite{Ball-kdim-sections} (cf. \cite[Chapter 10]{AGA-Book-I}). Given a probability measure $\mu$ on $\Real^n$ with continuous and exponentially-decaying density $f_\mu$ with $f_\mu(0) > 0$ (``non-degenerate measure"), and $p \geq 1$, denote by $K_p(\mu) \subset \Real^n$ the star-body with radial function: 
\[
\rho_{K_p(\mu)}(\theta) = \brac{ \frac{p}{\max f_\mu} \int_0^\infty r^{p-1} f_\mu(r \theta) dr}^{\frac{1}{p}} ~,~ \theta \in S^{n-1} . 
\]
Note our slightly non-standard normalization involving $\max f_\mu$ instead of $f_\mu(0)$, which seems to be more convenient. 
Integration in polar coordinates immediately verifies that $\abs{K_n(\mu)} = \frac{1}{\max f_\mu}$, and that (cf. \cite{Paouris-IsotropicTail}):
\begin{equation} \label{eq:ZpKp}
Z_p(\lambda_{K_{n+p}(\mu)}) = \brac{\frac{\abs{K_n(\mu)}}{\abs{K_{n+p}(\mu)}}}^{1/p} Z_p(\mu) . 
\end{equation}

\begin{lem}
For any star-body $L \subset \Real^n$:
\[
\mu(L) \leq \lambda_{K_n(\mu)}(L) .
\]
In particular, for all $q > 0$:
\[
m_q(\mu, L) \geq m_q(\lambda_{K_n(\mu)},L) .
\]
\end{lem}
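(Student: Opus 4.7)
The plan is straightforward polar integration combined with two natural upper bounds for the radial integrand. First, I would write both quantities in polar form: denoting by $d\omega$ the unnormalized surface measure on $S^{n-1}$,
\[
\mu(L) \;=\; \int_{S^{n-1}} \int_0^{\rho_L(\theta)} r^{n-1} f_\mu(r\theta)\,dr\,d\omega(\theta),
\]
while, applying polar integration to the indicator of $L \cap K_n(\mu)$ and using that $\abs{K_n(\mu)} = 1/\max f_\mu$ (a consequence of integrating the defining radial formula for $K_n(\mu)$ against $r^{n-1}$ and using $\mu(\Real^n)=1$), one obtains
\[
\lambda_{K_n(\mu)}(L) \;=\; \frac{\max f_\mu}{n}\int_{S^{n-1}} \min\set{\rho_L(\theta),\rho_{K_n(\mu)}(\theta)}^n d\omega(\theta).
\]

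The next step is to bound the inner radial integral in two ways for each fixed $\theta \in S^{n-1}$. On the one hand, estimating $f_\mu$ pointwise by $\max f_\mu$ gives
\[
\int_0^{\rho_L(\theta)} r^{n-1} f_\mu(r\theta)\,dr \;\leq\; \frac{\max f_\mu}{n}\,\rho_L(\theta)^n.
\]
On the other hand, extending the range to $[0,\infty)$ and invoking the very definition of $K_n(\mu)$ yields
\[
\int_0^{\rho_L(\theta)} r^{n-1} f_\mu(r\theta)\,dr \;\leq\; \int_0^{\infty} r^{n-1} f_\mu(r\theta)\,dr \;=\; \frac{\max f_\mu}{n}\,\rho_{K_n(\mu)}(\theta)^n.
\]
Taking the pointwise minimum of these two bounds and integrating over $\theta$ yields precisely the polar expression for $\lambda_{K_n(\mu)}(L)$ derived above, establishing the first inequality.

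The second claim follows formally by applying the first one to $sL$ in place of $L$ for every $s > 0$: whenever $\lambda_{K_n(\mu)}(sL) \leq e^{-q}$, automatically $\mu(sL) \leq e^{-q}$, so the defining supremum for $m_q(\mu,L)$ is taken over a set containing the corresponding one for $m_q(\lambda_{K_n(\mu)},L)$, forcing $m_q(\mu,L) \geq m_q(\lambda_{K_n(\mu)},L)$. I do not anticipate any real obstacle here; the only conceptual point worth flagging is that the slightly non-standard normalization $\max f_\mu$ (rather than $f_\mu(0)$) in the definition of $\rho_{K_n(\mu)}$ is exactly what is compatible with the trivial pointwise bound $f_\mu \leq \max f_\mu$, allowing the two radial estimates to combine cleanly into the minimum.
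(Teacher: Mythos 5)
Your argument is correct and is essentially identical to the paper's proof: both proceed by the same ray-wise bound $\int_0^{\rho_L(\theta)} r^{n-1} f_\mu(r\theta)\,dr \leq \frac{\max f_\mu}{n}\min(\rho_L^n(\theta),\rho_{K_n(\mu)}^n(\theta))$, obtained from the trivial pointwise estimate $f_\mu \leq \max f_\mu$ on one hand and the defining formula for $\rho_{K_n(\mu)}$ on the other, followed by integration over the sphere. The deduction of the quantile inequality from the measure inequality is also the same routine monotonicity argument.
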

\begin{proof}
Simply note that for all $\theta \in S^{n-1}$:
\begin{align*}
& \int_0^{\rho_L(\theta)} r^{n-1} f_\mu(r \theta) dr \leq \min\brac{(\max f_\mu) \frac{\rho_L^n(\theta)}{n} , \int_0^\infty r^{n-1} f_\mu( r\theta) dr} \\
& = \frac{\max f_\mu}{n} \min(\rho_{L}^n(\theta) , \rho_{K_n(\mu)}^n(\theta)) = \frac{\max f_\mu}{n} \rho^n_{K_n(\mu) \cap L}(\theta) . 
\end{align*}
Integrating the above ray-wise inequality on $S^{n-1}$, we obtain:
\begin{align*}
\mu(L) & = \int_{S^{n-1}} \int_0^{\rho_L(\theta)} r^{n-1} f_\mu(r \theta) dr d\theta \leq \max f_\mu \int_{S^{n-1}} \int_0^{\rho_{K_n(\mu) \cap L}(\theta)} r^{n-1} dr d\theta \\
&  = \max f_\mu \abs{K_n(\mu) \cap L} = \lambda_{K_n(\mu)}(L) ,
\end{align*}
as required. 
\end{proof}

Remarkably, it was observed by K.~Ball \cite{Ball-kdim-sections} that when $\mu$ is an origin-symmetric log-concave probability measure, $K_p(\mu)$ is in fact a convex body for all $p \geq 1$; this was extended in \cite{KlartagPerturbationsWithBoundedLK} to the non-symmetric case (assuming that $f_\mu(0) > 0$). 
Consequently, Proposition \ref{prop:Talagrand} immediately yields the following:
\begin{cor} \label{cor:Talagrand}
For any log-concave probability measure $\mu$ on $\Real^n$ so that $f_\mu(0) > 0$, star-body $L \subset \Real^n$, and $q , t > 0$:
\[
M(K_n(\mu) , 2 t m_q(\mu , L) L ) \leq M(K_n(\mu) , 2 t m_q(\lambda_{K_n(\mu)}, L) L) \leq \exp(1 + q + \frac{n}{t}) .
\]
\end{cor}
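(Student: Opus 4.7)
The plan is to simply chain together the three ingredients that have been assembled immediately before the statement. Concretely, I would take $K := K_n(\mu)$ in Proposition~\ref{prop:Talagrand}, use the pointwise comparison $f_\mu \leq \mathbf 1_{K_n(\mu)} \max f_\mu$ (encoded in the preceding lemma) to pass from $\mu$-quantiles to $\lambda_{K_n(\mu)}$-quantiles, and invoke the convexity of $K_n(\mu)$ to justify applying a convex-body statement to it.

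First I would recall that $K_n(\mu)$ is indeed a convex body: for origin-symmetric log-concave $\mu$ this is K.~Ball's original observation \cite{Ball-kdim-sections}, and in the non-symmetric case (where we only assume $f_\mu(0) > 0$) it was extended by Klartag~\cite{KlartagPerturbationsWithBoundedLK}; either way, the hypothesis of Proposition~\ref{prop:Talagrand} is satisfied. Applying that proposition with $K = K_n(\mu)$ and the given star-body $L$ immediately produces
\[
M\bigl(K_n(\mu),\, 2t\, m_q(\lambda_{K_n(\mu)}, L)\, L\bigr) \;\leq\; \exp\!\Bigl(1 + q + \tfrac{n}{t}\Bigr),
\]
which is the right-hand inequality of the corollary.

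For the left-hand inequality, I would invoke the preceding lemma in the form $m_q(\mu,L) \geq m_q(\lambda_{K_n(\mu)},L)$. Since $L$ is a star-body, the family $\{sL\}_{s>0}$ is monotone non-decreasing in $s$ (if $s \geq s' > 0$ and $x = s'y$ with $y \in L$, then $x = s\bigl(\tfrac{s'}{s}y\bigr)$ with $\tfrac{s'}{s}y \in L$), hence $2t\,m_q(\mu,L)\,L \supseteq 2t\,m_q(\lambda_{K_n(\mu)},L)\,L$. Finally, packing numbers are monotone non-increasing in the ``second argument": any collection of points in $K_n(\mu)$ whose translates by the larger set $2t\,m_q(\mu,L)\,L$ are disjoint is, a fortiori, a collection whose translates by the smaller set $2t\,m_q(\lambda_{K_n(\mu)},L)\,L$ are disjoint. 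Applying this with $A = K_n(\mu)$ yields the left-hand inequality, completing the chain.

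There is no genuine obstacle here — the corollary is essentially advertised as immediate. The only points requiring a modicum of care are (i) verifying that Ball's/Klartag's convexity result really applies in the stated generality ($\mu$ log-concave with $f_\mu(0) > 0$, not necessarily origin-symmetric), and (ii) getting the direction of the two monotonicities right (larger quantile $\Rightarrow$ larger covering set $\Rightarrow$ smaller packing number), both of which I have spelled out above.
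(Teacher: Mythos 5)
Your proposal matches the paper's argument exactly: the paper derives the corollary by citing Ball's (and Klartag's, for the non-symmetric case) convexity of $K_n(\mu)$, applying Proposition~\ref{prop:Talagrand} with $K = K_n(\mu)$, and using the preceding lemma's quantile comparison $m_q(\mu,L) \geq m_q(\lambda_{K_n(\mu)},L)$ together with monotonicity of packing numbers for the left-hand inequality. Correct and essentially identical to the paper's (one-line) justification.
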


It remains to pass from $K_n(\mu)$ to $Z_n(\mu)$ in the packing estimate above. This is standard, but for completeness, and in order to prove an additional estimate we will require later on, we provide a proof. First, it is known \cite{Paouris-Small-Diameter} that:
\begin{equation} \label{eq:ZnLambda}
Z_n(\lambda_K) \simeq \text{conv}(K \cup -K) ,
\end{equation}
for any convex body $K \subset \Real^n$. It is also known (see \cite{BarlowMarshallProschan,Ball-kdim-sections,Milman-Pajor-LK} for the even case and \cite[Lemmas 2.5,2.6]{KlartagPerturbationsWithBoundedLK} or \cite[Lemma 3.2 and (3.12)]{PaourisSmallBall} for the general one, noting our non-standard normalization) that for any log-concave measure $\mu$ on $\Real^n$ whose barycenter is at the origin, we have:
\begin{equation} \label{eq:Kpq}
1 \leq p \leq q  \;\; \Rightarrow \;\;  K_p(\mu) \subset K_q(\mu) \subset \frac{\Gamma(q+1)^{1/q}}{\Gamma(p+1)^{1/p}} e^{n (\frac{1}{p} - \frac{1}{q})}K_p(\mu) . 
\end{equation}
In particular, $K_n(\mu) \simeq K_{2n}(\mu)$. Combining this with (\ref{eq:ZpKp}) and (\ref{eq:ZnLambda}), we obtain for an \emph{origin-symmetric} log-concave measure $\mu$ (for which $f_\mu(0) = \max f_\mu > 0$):
\begin{equation} \label{eq:ZnKn}
Z_n(\mu) = \brac{\frac{\abs{K_{2n}(\mu)}}{\abs{K_{n}(\mu)}}}^{1/n} Z_n(\lambda_{K_{2n}(\mu)}) \simeq Z_n(\lambda_{K_{2n}(\mu)}) \simeq K_{2n}(\mu) \simeq K_n(\mu) .
\end{equation}
Note that the origin-symmetry of $\mu$ was crucially used to ensure $Z_n(\lambda_{K_{2n}(\mu)}) \simeq K_{2n}(\mu)$. It is possible to dispose of this restriction by employing the one-sided variants $Z_n^+(\mu)$ introduced in \cite{GuedonEMilmanInterpolating}, but we do not pursue this here. 

\medskip

Summarizing, we deduce from Corollary \ref{cor:Talagrand} and (\ref{eq:ZnKn}) that for an appropriate constant $C > 0$, we have under the assumptions of Theorem \ref{thm:part3}:
\[
M(Z_n(\mu) , C t m_q(\mu,L) L ) \leq M(K_n(\mu) , 2 t m_q(\mu , L) L ) \leq \exp(1 + q + \frac{n}{t}) ,
\]
concluding the theorem for the case $p=n$. When $p \geq n$, simply use (\ref{eq:Zpq}):
\[
Z_p(\mu) \subset \frac{p}{n} Z_n(\mu) ,
\]
and conclude:
\[
M(Z_p(\mu) , C t m_q(\mu, L) L) \leq M\brac{Z_n(\mu) , C t \frac{n}{p}  m_q(\mu, L) L} \leq \exp( 1 + q + \frac{p}{t}) .
\]
The proof of Theorem \ref{thm:part3} is complete. 

\medskip

Before concluding this section, we also record for future use the following well-known fact (cf. \cite{KlartagMilmanLogConcave,Klartag-Psi2,KlartagCLPpolynomial,PaourisSmallBall}); as we did not find a precise reference, we provide a proof for completeness. 

\begin{lem} \label{lem:ZnHuge}
For any log-concave probability measure $\mu$ on $\Real^n$ with barycenter at the origin we have:
\[
I_1(\mu,Z_n(\mu)) \leq I_n(\mu,Z_n(\mu)) \leq C .
\]
\end{lem}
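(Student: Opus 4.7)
The lemma has two inequalities. The first, $I_1(\mu,Z_n(\mu)) \leq I_n(\mu,Z_n(\mu))$, is Jensen's inequality applied to $t \mapsto t^n$ (this is already used in Lemma \ref{lem:Guedon}). The substance is the bound $I_n(\mu,Z_n(\mu)) \leq C$, which I would route through K.~Ball's body $K_n(\mu)$: first bound $I_n(\mu,K_n(\mu)) \leq C$ by a direct polar-coordinate computation, and then transfer the bound via the one-sided inclusion $Z_n(\mu) \supset c K_n(\mu)$.

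For the first step, expanding in polar coordinates and using $\norm{r\theta}_{K_n(\mu)} = r / \rho_{K_n(\mu)}(\theta)$ gives
\[
\int_{\Real^n} \norm{x}_{K_n(\mu)}^n \, d\mu(x) = \int_{S^{n-1}} \rho_{K_n(\mu)}(\theta)^{-n} \int_0^\infty r^{2n-1} f_\mu(r\theta) \, dr \, d\theta.
\]
Directly from the definition of $K_{2n}(\mu)$, the inner radial integral equals $\frac{\max f_\mu}{2n} \rho_{K_{2n}(\mu)}(\theta)^{2n}$. Applying (\ref{eq:Kpq}) with $p=n$, $q=2n$ yields the pointwise bound $\rho_{K_{2n}(\mu)}(\theta) \leq C \rho_{K_n(\mu)}(\theta)$, and integration on the sphere combined with the polar formula $\int_{S^{n-1}} \rho_{K_n(\mu)}^n \, d\theta = n\abs{K_n(\mu)}$ and the identity $\abs{K_n(\mu)} = 1/\max f_\mu$ produces $I_n(\mu,K_n(\mu))^n \leq C^n$.

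For the transfer step, I would establish $Z_n(\mu) \supset c K_n(\mu)$ for any log-concave $\mu$ with barycenter at the origin. Indeed, (\ref{eq:ZpKp}) with $p=n$ yields $Z_n(\mu) = (\abs{K_{2n}(\mu)}/\abs{K_n(\mu)})^{1/n} Z_n(\lambda_{K_{2n}(\mu)})$, with the scalar factor $\simeq 1$ by (\ref{eq:Kpq}). Since $K_{2n}(\mu)$ is a convex body (by Ball's theorem in its extension to the non-symmetric case), (\ref{eq:ZnLambda}) gives $Z_n(\lambda_{K_{2n}(\mu)}) \simeq \mathrm{conv}(K_{2n}(\mu) \cup -K_{2n}(\mu)) \supset K_{2n}(\mu) \supset K_n(\mu)$, where the last inclusion is the easy half of (\ref{eq:Kpq}). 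Thus $\norm{x}_{Z_n(\mu)} \leq C \norm{x}_{K_n(\mu)}$, and combining with the first step gives $I_n(\mu,Z_n(\mu)) \leq C'$.

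The only real subtlety is the non-symmetric case: the paper's (\ref{eq:ZnKn}) gives the full equivalence $Z_n(\mu) \simeq K_n(\mu)$ only under origin-symmetry, because the step $Z_n(\lambda_{K_{2n}(\mu)}) \simeq K_{2n}(\mu)$ uses it. Here, however, only the one-sided inclusion $Z_n(\mu) \supset c K_n(\mu)$ is needed, and this survives for general log-concave $\mu$ with barycenter at the origin, as traced above. Everything else is a routine integration/volumetric calculation, so I expect no serious obstacle.
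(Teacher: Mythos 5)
Your proposal is correct and follows essentially the same route as the paper: both reduce the bound to the Ball bodies via the comparison $K_{2n}(\mu) \simeq K_n(\mu)$ from (\ref{eq:Kpq}) and the one-sided (symmetry-free) inclusion $Z_n(\mu) \supset Z_n(\lambda_{K_{2n}(\mu)}) \supset c K_{2n}(\mu)$. The only cosmetic difference is that the paper first transfers the measure $\mu$ to $\lambda_{K_{2n}(\mu)}$ via the polar identity $I_p(\nu,L) = \brac{\abs{K_{n+p}(\nu)}/\abs{K_n(\nu)}}^{1/p} I_p(\lambda_{K_{n+p}(\nu)},L)$ and then bounds $I_n(\lambda_{K_{2n}(\mu)},K_{2n}(\mu)) \leq 1$ trivially, whereas you carry out the equivalent polar computation directly against $K_n(\mu)$.
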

\begin{proof}
As in the proof of (\ref{eq:ZpKp}), it is immediate to verify by polar-integration that for any non-degenerate measure $\nu$ and star-body $L$ in $\Real^n$:
\[
I_p(\nu,L) = \brac{\frac{\abs{K_{n+p}(\nu)}}{\abs{K_{n}(\nu)}}}^{1/p} I_p(\lambda_{K_{n+p}(\nu)} , L) .
\]
Applying this to $\nu = \mu$, $L = Z_n(\mu)$ and $p=n$, and using that $K_{2n}(\mu) \simeq K_n(\mu)$ by (\ref{eq:Kpq}), we obtain:
\[
I_n(\mu,Z_n(\mu)) \leq C' I_n(\lambda_{K_{2n}(\mu)}, Z_n(\mu)) .  
\]
It remains to use as in (\ref{eq:ZnKn}) that (without any symmetry assumptions):
\[
Z_n(\mu) = \brac{\frac{\abs{K_{2n}(\mu)}}{\abs{K_{n}(\mu)}}}^{1/n} Z_n(\lambda_{K_{2n}(\mu)}) \supset Z_n(\lambda_{K_{2n}(\mu)}) \supset c \; \text{Conv}(K_{2n}(\mu) \cup - K_{2n}(\mu)) \supset c K_{2n}(\mu) .
\]
Consequently:
\[
I_n(\mu,Z_n(\mu)) \leq \frac{C'}{c} I_n(\lambda_{K_{2n}(\mu)}, K_{2n}(\mu)) \leq \frac{C'}{c} ,
\]
concluding the proof. 
\end{proof}

\section{The Program - Full Version} \label{sec:full-program}

We are now ready to state the full version of The Program; the full version extends the simplified one presented in the Introduction by allowing the packing number after dimension reduction to \emph{drop} by a $D$-th root and by introducing an additional scaling parameter $t > 0$. As usual, we fix an origin-symmetric convex body $K \subset \Real^n$, origin-symmetric log-concave measure $\mu$ on $\Real^n$ and $p \geq 1$. 
For all $m = 1,\ldots, n$, set as usual $\M_m := \set{ T_* \mu \; ; \; T : \Real^n \rightarrow \Real^m \text{ linear}}$, which is a family of log-concave measures on $\Real^m$ by the Pr\'ekopa--Leindler theorem. In addition, let $\L_m$ denote some family of origin-symmetric star-bodies in $\Real^m$,
so that $K \in \L_n$. 
The Program for establishing the Generalized Regular Dual Sudakov estimate:
\begin{equation} \label{eq:gen-regular-dual-Sudakov}
\mu(K) \geq \frac{1}{e} \;\; \; \Rightarrow \;\;\; M(Z_p(\mu) , t R K) \leq \exp(C_{A,B,D,\varphi_t,t} p ) , 
\end{equation}
consists of establishing the first 2 parts below for some constants $R,A,B,D \geq 1$ and a certain function $\varphi_t$, described below. 

\begin{enumerate}
\item \textbf{Part 1 (Massive Partial Separation Dimension Reduction)}. \\
If $M(Z_p(\mu) , t R K) = e^k$ with  $\mu(K) \geq \frac{1}{e}$ and $4 B D \leq k \leq n/A$, show that there exists $l \in [k / D , k]$ and a linear map $T: \Real^n \rightarrow \Real^m$ and $L \in \L_m$, with $m \leq A l$, so that:
\begin{enumerate}
\item  $M(T Z_p(\mu), t L) \geq e^l$ (\textbf{``Partial Separation Dimension Reduction"}).
\item $T_* \mu(L) \geq \exp(-q_m)$, $1 \leq q_m \leq l/2$ (``\textbf{$L$ is sufficiently massive}"). 
\end{enumerate}
\item \textbf{Part 2 (Weak Generalized Regular Dual Sudakov)}. \\
For all $m=1,\ldots,n$, $L \in \L_m$ and $\nu \in \M_m$, show that:
\[
1 \leq p \leq m \;\; , \;\; \nu(L) \geq \exp(-q_m) \;\;\; \Rightarrow \;\;\; M(Z_p(\nu) , t L) \leq \exp(B + q_m + m \varphi_t(p/m)) ,
\]
where $\varphi_t : [0,1] \rightarrow \Real_+$ is an increasing function with $\varphi_t(0) = 0$ and $x \mapsto \varphi_t(x) / x$ non-increasing (depending only on $t$ and independent of all other parameters).
\item \textbf{Part 3 (Large $p$)}. \\
For all $m=1,\ldots,n$, $L \in \L_m$ and $\nu \in \M_m$, Theorem \ref{thm:part3} verifies that:
\[
p \geq m \;\; , \;\; \nu(L) \geq \exp(-q_m)  \;\;\; \Rightarrow \;\;\; M(Z_p(\nu) , t L) \leq \exp\brac{1 + q_m + C \frac{p}{t}} ,
\]
for some universal constant $C \geq 1$. 
\end{enumerate}

\begin{rem} 
As in the Introduction, we state the following \emph{linear} version of Part 1:
\begin{enumerate}
\renewcommand\theenumi{(\arabic{enumi}')}
\renewcommand\labelenumi{\theenumi}
\item 
\textbf{Part 1' - Linear Version} \\
If $\set{x_i}_{i=1,\ldots,e^k} \subset \Real^n$ is a collection of $K$-separated points with $\mu(K) \geq \frac{1}{e}$ and $4BD \leq k \leq n/A$,
show that there exist $l \in [k / D , k]$, a linear map $T: \Real^n \rightarrow \Real^m$, and $L \in \L_m$ with $m \leq A l$, so that:
\begin{enumerate}
\item  There exists $I \subset \set{1,\ldots,e^k}$ with $\# I \geq e^l$ so that $\set{T(x_i)}_{i \in I} \subset \Real^m$ are $\frac{1}{R} L$-separated (\textbf{``Partial One-sided Johnson--Lindenstrauss"}).
\item $T_* \mu(L) \geq \exp(-q_m)$, $1 \leq q_m \leq l/2$ (``\textbf{$L$ is sufficiently massive}"). 
\end{enumerate}
\end{enumerate}
By applying this linear version of Part 1 to the maximal collection of $K$-separated points $\set{x_i}$ in $\frac{1}{t R} Z_p(\mu)$, it is evident that establishing Part 1' is sufficient (but not necessary) for establishing Part 1 of The (full) Program. 
\end{rem}

\begin{thm} \label{thm:full-program}
Establishing The (full) Program above yields the Generalized Regular Dual Sudakov Estimate (\ref{eq:gen-regular-dual-Sudakov}) with:
\begin{equation} \label{eq:C-def}
C_{A,B,D,\varphi_t,t} := D \max \brac{\frac{4 \max(C, C' \frac{B}{R})}{t}, \frac{1}{A \varphi_t^{-1}(\frac{1}{4A})}} .
\end{equation}
\end{thm}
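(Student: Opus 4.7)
The plan is to follow the architecture of the proof of Theorem \ref{thm:program}, adapting it to the two new features of the full Program: the allowed $D$-th root loss in the packing number in Part 1, and the regularity scale $t$ appearing throughout. I would assume $\mu(K) \geq 1/e$, set $e^k := M(Z_p(\mu), t R K)$, and aim for $k \leq C_{A,B,D,\varphi_t,t} \cdot p$. Since $p \geq 1$, I may assume $k \geq C_{A,B,D,\varphi_t,t}$, or else the estimate is trivial. Examining (\ref{eq:C-def}), with the universal constant $C'$ chosen sufficiently large, this lower bound also forces $k \geq 4BD$, which is precisely the precondition of Part 1 and is large enough to absorb the additive constants that appear below.

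Next I perform the dimension-reduction step. If $k \leq n/A$, Part 1 supplies an $l \in [k/D, k]$, a linear $T : \Real^n \to \Real^m$ with $m \leq A l$, and an $L \in \L_m$ such that $M(T Z_p(\mu), t L) \geq e^l$ and $\nu := T_* \mu$ satisfies $\nu(L) \geq \exp(-q_m)$ with $1 \leq q_m \leq l/2$. If instead $k > n/A$, I use the trivial reduction $m=n$, $T = \mathrm{Id}$, $L = K$, $l = k$, and $q_m = 1$, which is valid since $k \geq 4BD \geq 4$ gives $q_m \leq l/2$. Because $T Z_p(\mu) = Z_p(\nu)$, in either case we have $e^l \leq M(Z_p(\nu), t L)$.

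Now I split on $p$ versus $m$. If $p \geq m$, Part 3 yields $l \leq 1 + q_m + C p/t \leq 1 + l/2 + C p/t$; absorbing the $+1$ into $l/4$ (valid since $l \geq 4$), we get $l \leq 4 C p/t$ and thus $k \leq D l \leq 4 D C p/t$. If $p \leq m$, Part 2 combined with $m \leq A l$ and the fact that $x \mapsto \varphi_t(x)/x$ is non-increasing gives $l \leq B + l/2 + A l \, \varphi_t(p/(A l))$; since $l \geq 4B$, the $+B$ is absorbed into $l/4$, leaving $1/(4A) \leq \varphi_t(p/(Al))$. Inverting the increasing function $\varphi_t$ then produces $l \leq p/(A \varphi_t^{-1}(1/(4A)))$ and hence $k \leq D l \leq D p / (A \varphi_t^{-1}(1/(4A)))$. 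The maximum of the two resulting bounds is exactly $C_{A,B,D,\varphi_t,t} \cdot p$, as desired.

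The main obstacle is purely bookkeeping with constants: one must pick the universal $C'$ in the first entry of the maximum in (\ref{eq:C-def}) large enough that $k \geq C_{A,B,D,\varphi_t,t}$ automatically delivers $k \geq 4BD$, the threshold needed both for invoking Part 1 and for absorbing the additive constants $+1$ in Part 3 and $+B$ in Part 2 into $l/2$. This is why the first entry of the maximum carries a $B$ factor absent from the simplified Theorem \ref{thm:program}, and why the outer factor of $D$ exactly compensates for the $D$-th root loss between $k$ and $l$ inherent in Part 1.
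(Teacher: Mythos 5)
Your argument for the case $k \geq 4BD$ matches the paper's: the dimension-reduction step, the absorption of $1+q_m$ (resp.\ $B+q_m$) into $3l/4$ using $l \geq k/D \geq 4B$, and the use of the monotonicity of $x \mapsto \varphi_t(x)/x$ to pass from $m\varphi_t(p/m)$ to $Al\varphi_t(p/(Al))$ are all correct and identical in structure to the paper's proof. The gap is in how you reach the threshold $k \geq 4BD$. You claim that, with $C'$ chosen large, $k \geq C_{A,B,D,\varphi_t,t}$ forces $k \geq 4BD$. But the first entry of the maximum in (\ref{eq:C-def}) is $4\max(C, C'B/R)/t$, which tends to $0$ as $t \to \infty$; no choice of the \emph{universal} constant $C'$ can make $4DC'B/(Rt) \geq 4BD$ for all $t$, and nothing forces the second entry $1/(A\varphi_t^{-1}(1/(4A)))$ to exceed $4B$ either (in the ellipsoid case $\varphi_t^{-1}(y) \simeq \min(y^{3/2}t, y^2t^2)$ grows with $t$, so that entry actually shrinks). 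So for large $t$ your reduction to $k \geq 4BD$ fails, and with it the right to invoke Part 1.

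The missing ingredient is the paper's closing observation: by Lemma \ref{lem:IpWish1} and Lemma \ref{lem:Guedon}, $Z_p(\mu) \subset I_p(\mu,K)K \subset C' p\, m_1(\mu,K) K \subset C' p K$ (since $\mu(K) \geq 1/e$ gives $m_1(\mu,K) \leq 1$), so $M(Z_p(\mu), tRK) = 1$ whenever $t \geq C'p/R$ and there is nothing to prove there. In the complementary, non-trivial range $t < C'p/R$ one has $C_{A,B,D,\varphi_t,t}\cdot p \geq 4DC'\frac{B}{R}\frac{p}{t} \geq 4BD$ --- note this genuinely uses the factor $p$ together with the bound on $t$, not merely $p \geq 1$ --- so either $k \leq C_{A,B,D,\varphi_t,t}\cdot p$ outright or $k \geq 4BD$ and your main argument applies. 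This is also the actual reason the factor $B/R$ appears in (\ref{eq:C-def}); it is not there to absorb the additive constants $+1$ and $+B$ (those are handled by $l \geq 4B$, as you correctly do), but to guarantee the threshold $4BD$ throughout the non-trivial range of $t$.
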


\begin{proof}
We assume that $\mu(K) \geq 1/e$. We will first show that:
\begin{equation} \label{eq:goal-again}
e^k := M(Z_p(\mu) , t R K) \leq \exp \brac{ D \max \brac{\frac{4C}{t} , \frac{1}{A \varphi_t^{-1}(\frac{1}{4A})}} p } ,
\end{equation}
under the assumption that $k \geq 4 BD$. 

Under this assumption, there exist $l \in [k / D,k]$, a linear map $T : \Real^n \rightarrow \Real^m$, and $L \in \L_m$ for some $m \leq \min(n, A l)$, so that $M(T Z_p(\mu), t L) \geq e^l$ and $T_* \mu( L) \geq \exp(-q_m)$, $1 \leq q_m \leq l/2$. Indeed, if $k < n/A$ this follows from Part 1, whereas if $k \geq n/A$ this is actually trivial by using $m=n$, $l = k$, $T = Id$, $L=K$ and $q_m=1$. Denoting $\nu = T_* \mu \in \M_m$, note that $T Z_p(\mu) = Z_p(\nu)$. Also note that $l/4 \geq  B \geq 1$. Consequently, if $p \geq m$ then by Part 3: 
\[
\exp(l) \leq M(Z_p(\nu) , t L) \leq \exp(1 + q_m + C \frac{p}{t} ) \leq \exp(1 + l/2 + C \frac{p}{t} ) \leq \exp( l/4 + l/2 + C \frac{p}{t} ) ,
\]
implying that $k \leq D l \leq 4 D C \frac{p}{t}$, as required. Alternatively, if $p \leq m$ then by Part 2 and the assumption that $x \mapsto \varphi_t(x)/x$ is non-increasing:
\[
\exp(l) \leq M(Z_p(\nu) , t L) \leq \exp(B + q_m + m \varphi_t(p/m)) \leq \exp(l/4 + l/2 + A l \varphi_t(p/(A l))) .
\]
It follows since $\varphi_t$ is increasing from $0$ that:
\[
\frac{p}{A l} \geq \varphi^{-1}\brac{ \frac{1}{4 A} } > 0 ,
\]
implying that $k \leq D l \leq  D \frac{1}{A \varphi_t^{-1}(\frac{1}{4A})} p$, and establishing (\ref{eq:goal-again}) under the assumption that $k \geq 4 BD$.

To complete the proof, recall that $Z_p(\mu) \subset I_p(\mu,K) K$ by Lemma \ref{lem:IpWish1}. Since $I_p(\mu,K) \leq C' p \; m_1(\mu,K)\leq C' p$ by Lemma \ref{lem:Guedon}, it follows that the left-hand-side of (\ref{eq:goal-again}) is actually $1$ (equivalently, $k=0$) for $t \geq C' p / R$, in which case there is nothing to prove. On the other hand, in the non-trivial range $t \in (0, C' p / R)$, we have $4 D C' \frac{B}{R} \frac{1}{t} p \geq  4 BD$, which leads to the definition of $C_{A,B,D,\varphi_t,t}$ in (\ref{eq:C-def}) and confirms (\ref{eq:gen-regular-dual-Sudakov})  for all $t > 0$. 
\end{proof}

\section{Part 2 - Weak Generalized Dual Sudakov: Ellipsoids} \label{sec:part2-ellipsoids}

Recall that:
\[
I_q(\mu,K) := \brac{\int_{\Real^n} \norm{x}^q_K d\mu(x)}^{1/q} .
\]
When $K = B_2^n$, we simply denote $I_q(\mu) = I_q(\mu,B_2^n)$. 

\begin{thm} \label{thm:part2-ellipsoids}
Let $\mu$ denote an origin-symmetric log-concave probability measure on $\Real^n$ and let $\Eps \subset \Real^n$ denote an (origin-symmetric) ellipsoid. Then for any $p \in [1,n]$:
\[
M(Z_p(\mu) , t I_1(\mu,\Eps)  \Eps) \leq \exp \brac{ C \frac{p^{2/3} n^{1/3}}{t^{2/3}} + C \frac{\sqrt{p} \sqrt{n}}{t} } \;\;\; \forall t > 0 . 
\]
\end{thm}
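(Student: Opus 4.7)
The plan is to combine a volumetric covering estimate, expanded via the Steiner formula and Kubota's integral formula, with Paouris' volumetric bound for $L_p$-centroid bodies of isotropic log-concave measures, and to conclude by optimizing a sum over all intrinsic dimensions $j=0,\ldots,n$. As a preliminary normalization, observe that packing numbers, $Z_p$ and $I_1(\cdot,\Eps)$ are all covariant under invertible linear maps $T$: writing $\Eps = T B_2^n$ and replacing $\mu$ by the log-concave measure $T^{-1}_*\mu$, we may assume without loss of generality that $\Eps = B_2^n$.

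Setting $r = t I_1(\mu)$, the trivial volumetric estimate $M(A,B) \leq \abs{A+B}/\abs{B}$ combined with Steiner's formula for $\abs{Z_p(\mu) + r B_2^n}$ and Kubota's identity $W_{n-j}(K) = (\abs{B_2^n}/\abs{B_2^j}) \E_{F \in G_{n,j}} \abs{P_F K}$ yields
\[
M(Z_p(\mu), r B_2^n) \leq \sum_{j=0}^n \binom{n}{j} \frac{\E_{F \in G_{n,j}} \abs{Z_p(\pi_F \mu)}}{\abs{B_2^j} \, r^j}.
\]
To bound $\abs{Z_p(\pi_F \mu)}^{1/j}$ for the log-concave measure $\nu = \pi_F \mu$ on $F \cong \Real^j$, I would factor $Z_p(\nu) = \Cov(\nu)^{1/2} Z_p(\nu_0)$, with $\nu_0$ the isotropic normalization of $\nu$, and combine two ingredients: (a) the AM--GM plus Kahane--Khintchine estimate $(\det \Cov(\nu))^{1/(2j)} \leq I_2(\nu)/\sqrt{j} \leq C I_1(\nu)/\sqrt{j}$; and (b) Paouris' volumetric bound $\abs{Z_p(\nu_0)}^{1/j} \leq C\sqrt{p/j}$ for $p \leq j$, extended via the inclusion $Z_p(\nu_0) \subset C(p/j) Z_j(\nu_0)$ together with $\abs{Z_j(\nu_0)}^{1/j} \leq C$ to give $\abs{Z_p(\nu_0)}^{1/j} \leq C p/j$ for $p \geq j$. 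This produces
\[
\abs{Z_p(\pi_F \mu)}^{1/j} \leq C\, I_1(\pi_F \mu) \cdot \begin{cases} \sqrt{p}/j & p \leq j, \\ p/j^{3/2} & p \geq j. \end{cases}
\]

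Taking $j$-th powers and expectation, the key remaining step is the sharp moment bound $\E_F I_1(\pi_F \mu)^j \leq (C \sqrt{j/n}\, I_1(\mu))^j$. The mean bound $\E_F I_1(\pi_F \mu) \leq \sqrt{j/n}\, I_1(\mu)$ follows from $\E_F \abs{P_F x}^2 = (j/n)\abs{x}^2$ via Jensen and Fubini; to promote this to a sharp $j$-th moment estimate I would use that $F \mapsto I_1(\pi_F \mu)$ is $I_1(\mu)$-Lipschitz on $G_{n,j}$ with respect to the operator-norm metric (by the reverse triangle inequality), and invoke Gaussian-type concentration of measure on the Grassmannian. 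Plugging in, together with $\binom{n}{j} \leq (en/j)^j$ and $\abs{B_2^j}^{-1/j} \leq C\sqrt{j}$, the $j$-th summand reduces to $(C\sqrt{pn}/(jt))^j$ in the regime $j \geq p$ and to $(C\sqrt{n}\, p/(j^{3/2} t))^j$ in the regime $j \leq p$.

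Finally, one optimizes over $j$: the first family is maximized at $j^* \simeq \sqrt{pn}/t$, contributing $\exp(C\sqrt{pn}/t)$; the second family is maximized at $j^* \simeq (\sqrt{n}\,p/t)^{2/3}$, contributing $\exp(C p^{2/3} n^{1/3}/t^{2/3})$. When either unconstrained optimum falls outside its allowed range of $j$, the boundary contribution at $j=p$ coincides in both families and is dominated by the other. The factor of $n+1$ from summing is absorbed into the constants, yielding the stated estimate. The main obstacle I expect is the sharp moment bound on $\E_F I_1(\pi_F \mu)^j$: a naive application of Cauchy--Schwarz or H\"older only produces $C^j (j/n) I_1(\mu)^j$, which carries a single power of $j/n$ rather than the crucial $(j/n)^{j/2}$ concentration factor, and is far too weak to make the sum converge in the regime of interest. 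A genuine concentration-of-measure argument on the Grassmannian appears to be essential.
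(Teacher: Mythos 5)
Your proposal is correct and follows the same skeleton as the paper's proof: reduce to $\Eps = B_2^n$, apply the volumetric bound $M(A,B)\le \abs{A+B}/\abs{B}$, expand via Steiner and Kubota, control $\vrad(Z_p(\pi_F\mu))$ by Paouris' estimate (with the $p/j$ correction via $Z_p \subset \frac{p}{j}Z_j$ for $p \ge j$), and optimize the resulting sum over $j$ in the two regimes $j\ge p$ and $j\le p$; your final optimization and the resulting two exponents match the paper's computation exactly. The one place you diverge is the step you correctly single out as the crux: controlling $\E_F \abs{P_F Z_2(\mu)}$-type quantities, i.e.\ the quermassintegrals of the ellipsoid $Z_2(\mu)$. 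You propose the moment bound $\E_F I_1(\pi_F\mu)^j \le (C\sqrt{j/n}\,I_1(\mu))^j$ via Lipschitz concentration on the Grassmannian; this does work (the map is $I_1(\mu)$-Lipschitz, concentration on $G_{n,j}$ gives sub-Gaussian fluctuations of order $I_1(\mu)\sqrt{j/n}$, which is of the same order as the mean, so the $j$-th moment is controlled), but it is heavier than necessary. The paper instead observes that the ratio bound $\vrad(P_F Z_p(\mu)) \le C\max(\sqrt{p},p/\sqrt{j})\,\vrad(P_F Z_2(\mu))$ holds \emph{pointwise} in $F$, so that after raising to the $j$-th power and averaging one is left with $W_j(Z_2(\mu))$ itself; since $Z_2(\mu)$ is an ellipsoid, the Alexandrov inequalities give the deterministic monotonicity $(W_j(Z_2(\mu))/\abs{B_2^n})^{1/j} \le W_1(Z_2(\mu))/\abs{B_2^n} = M^*(Z_2(\mu)) \le I_2(\mu)/\sqrt{n}$, which is exactly the $(j/n)^{j/2}$ factor you need, with no concentration argument at all. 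So your ``genuine obstacle'' has a purely classical convex-geometric resolution; your concentration route is a valid substitute, at the cost of importing Gromov--Milman concentration where an application of Alexandrov's inequalities to a single ellipsoid suffices.
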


Since $M(Z_p(\mu) , t I_1(\mu,\Eps)  \Eps)$ is invariant under simultaneously applying a linear transformation to $\mu$ and $\Eps$, we may and will reduce to the case $\Eps = B_2^n$. 
For the proof, our strategy will be to invoke the standard volumetric estimate on the packing numbers (see Subsection \ref{subsec:pack-cover}):
\[
M(Z_p(\mu) ,  t  B_2^n) \leq \frac{\abs{Z_p(\mu) + t B_2^n} }{ \abs{t  B_2^n} } \;\;\; \forall t > 0 . 
\]
To handle the numerator, we use Steiner's classical formula \cite[Chapter 4]{Schneider-Book}, stating that for any convex body $K \subset \Real^n$:
\[
\abs{K + t B_2^n} = \sum_{k=0}^n {n \choose k} W_k(K) t^{n-k} ,
\]
where $W_k(K)$ denotes the $k$-th quermassintegral (or mixed-volume) of $K$; the latter is often denoted as $W_{n-k}(K)$ in the literature, but we prefer our convention which keeps track of the homogeneity in $K$. Recall that by Kubota's formula (e.g. \cite[Chapter 5]{Schneider-Book}), we have:
\begin{equation} \label{eq:Kubota}
W_k(K) = \frac{\abs{B_2^n}}{\abs{B_2^k}} \int_{G_{n,k}} \abs{P_F K} d\sigma_{G_{n,k}}(F) =  \abs{B_2^n} \int_{G_{n,k}} \volrad(P_F K)^k d\sigma_{G_{n,k}}(F) ,
\end{equation}
where $\sigma_{G_{n,k}}$ denotes the Haar probability measure on $G_{n,k}$ (with the interpretation when $k=0$ that $W_0(K) = \abs{B_2^n}$). 

\medskip
To bound $W_k(Z_p(\nu))$, we will need the following averaged version of \cite[Theorem 2.4]{GiannopoulosPajorPaourisPsi2},\cite[Proposition 3.1]{EMilman-IsotropicMeanWidth}:
\begin{prop} \label{prop:Zp-mixed}
Let $\mu$ denote an origin-symmetric log-concave probability measure on $\Real^n$. Then for all $p \geq 1$ and $k=1,\ldots,n$:
\[
W_k(Z_p(\mu))^{\frac{1}{k}} \leq C \max(\sqrt{p} , p / \sqrt{k}) W_k(Z_2(\mu))^{\frac{1}{k}} . 
\]
\end{prop}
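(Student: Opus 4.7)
The plan is to derive the desired inequality pointwise on the Grassmannian $G_{n,k}$ and then integrate against Kubota's formula \eqref{eq:Kubota}. Fix $F\in G_{n,k}$; since $Z_p$ commutes with linear maps, $P_F Z_p(\mu) = Z_p(\pi_F\mu)$, and by Pr\'ekopa--Leindler the marginal $\pi_F\mu$ is an origin-symmetric log-concave probability measure on $F\cong\Real^k$. Hence it suffices to prove the \emph{pointwise} comparison
\[
\volrad(Z_p(\nu)) \leq C \max(\sqrt{p},p/\sqrt{k})\, \volrad(Z_2(\nu))
\]
for every origin-symmetric log-concave probability measure $\nu$ on $\Real^k$, and then to raise to the $k$-th power and average over $F$ with respect to $\sigma_{G_{n,k}}$.

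To prove the pointwise bound, I would first observe that the ratio $\volrad(Z_p(\nu))/\volrad(Z_2(\nu))$ is invariant under linear transformations of $\nu$: both centroid bodies transform by the same linear map, and $\volrad$ picks up the same factor $\abs{\det T}^{1/k}$ in numerator and denominator. Consequently one may reduce to the case where $\nu$ is isotropic, so that $Z_2(\nu) = B_2^k$ and $\volrad(Z_2(\nu)) = 1$. The desired bound then becomes $\volrad(Z_p(\nu))\leq C\max(\sqrt{p},p/\sqrt{k})$, which is exactly the content of the pointwise centroid-body volume estimate established in \cite[Theorem 2.4]{GiannopoulosPajorPaourisPsi2} and \cite[Proposition 3.1]{EMilman-IsotropicMeanWidth}: for the range $1\leq p\leq k$ this is Paouris's bound $\volrad(Z_p(\nu))\leq C\sqrt{p}$, and for $p\geq k$ it follows by the inclusion $Z_p(\nu)\subset C\tfrac{p}{k} Z_k(\nu)$ from \eqref{eq:Zpq} combined with the $p=k$ case.

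It then remains to integrate. Applying Kubota's formula \eqref{eq:Kubota} twice and inserting the pointwise inequality raised to the $k$-th power gives
\[
W_k(Z_p(\mu)) = \abs{B_2^n}\int_{G_{n,k}} \volrad(Z_p(\pi_F\mu))^k\, d\sigma_{G_{n,k}}(F)
\leq C^k \max(\sqrt{p},p/\sqrt{k})^k\, W_k(Z_2(\mu)),
\]
and taking $k$-th roots yields the proposition. The only genuine ingredient is the pointwise centroid-body volume bound across the full range $p\in[1,k]$; everything else is bookkeeping (linear-invariance of the ratio, Pr\'ekopa--Leindler, Kubota, and Jensen/monotonicity in the integration step). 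I expect no real obstacle beyond quoting the cited pointwise estimate in the intermediate regime $p\simeq k$, where one must be careful that the constant in Paouris's theorem is universal and independent of the dimension $k$ of the marginal.
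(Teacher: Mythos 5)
Your proposal is correct and follows essentially the same route as the paper: the pointwise bound $\vrad(Z_p(\eta))\leq C\max(\sqrt{p},p/\sqrt{k})\,\det\Cov(\eta)^{1/2k}$ on each $k$-dimensional marginal (Paouris for $1\leq p\leq k$, the inclusion $Z_p\subset \frac{p}{k}Z_k$ for $p\geq k$), followed by raising to the $k$-th power and integrating over $G_{n,k}$ via Kubota's formula. Your intermediate reduction to the isotropic case is just a cosmetic repackaging of the identity $\volrad(Z_2(\eta))=\det\Cov(\eta)^{1/2k}$ that the paper uses directly.
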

\begin{proof}
It was shown in \cite[Theorem 6.2]{Paouris-IsotropicTail} (see also \cite[Corollary 2.2]{EMilman-IsotropicMeanWidth}) that for any (say origin-symmetric) log-concave probability measure $\eta$ on $\Real^k$, one has:
\[
\vrad(Z_p(\eta)) \leq C \sqrt{p} \; \det \Cov(\eta)^{\frac{1}{2k}}  \;\;\; \forall 1 \leq p \leq k . 
\]
When $p \geq k$, since $Z_p(\eta) \subset \frac{p}{k} Z_k(\eta)$, it follows that:
\[
\vrad(Z_p(\eta))  \leq \frac{p}{k}\vrad(Z_k(\eta)) \leq C \frac{p}{\sqrt{k}} \; \det \Cov(\eta)^{\frac{1}{2k}}  \;\;\; \forall p \geq k . 
\]
Finally, noting that $\det \Cov(\eta)^{\frac{1}{2k}} = \volrad(Z_2(\eta))$ as $Z_2(\eta)$ is an ellipsoid, we obtain:
\[
\vrad(Z_p(\eta)) \leq  C \max(\sqrt{p} , p / \sqrt{k}) \vrad(Z_2(\eta)) \;\;\; \forall p \geq 1. 
\]
Applying the above to $\eta = \pi_F \mu$, using that $Z_q(\pi_F \mu) = P_F Z_q(\mu)$,  integrating over $F \in G_{n,k}$ and applying Kubota's formula (\ref{eq:Kubota}), the assertion readily follows. 
\end{proof}

The quermassintegrals of the ellipsoid $Z_2(\mu)$ are particularly easy to compute using elementary linear algebra, and one may show that $(W_k(Z_2(\mu))/\abs{B_2^n})^{2/k}$ is closely related to the $k$-th root of the $k$-th symmetric elementary polynomial in the eigenvalues of $\Cov(\mu)$ (appropriately normalized). However, we will only require:
\begin{lem} \label{lem:Z2-mixed}
For all log-concave probability measures $\mu$ on $\Real^n$:
\[
\det \; \Cov(\mu)^{\frac{1}{2n}} \leq \brac{\frac{W_k(Z_2(\mu))}{\abs{B_2^n}}}^{\frac{1}{k}} \leq \brac{\frac{1}{n} \tr \; \Cov(\mu)}^{\frac{1}{2}} \;\;\; \forall k=1,\ldots, n . 
\]
\end{lem}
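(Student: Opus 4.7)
The plan is to reduce everything to linear algebra on the ellipsoid $Z_2(\mu)$. Since $\Cov(\mu)$ is positive semi-definite, we may write $Z_2(\mu) = \Cov(\mu)^{1/2}(B_2^n)$, an ellipsoid of volume $\abs{B_2^n} \det(\Cov(\mu))^{1/2}$, so $\vrad(Z_2(\mu)) = \det(\Cov(\mu))^{1/(2n)}$. Kubota's formula (\ref{eq:Kubota}) then yields
\[
\frac{W_k(Z_2(\mu))}{\abs{B_2^n}} = \int_{G_{n,k}} \vrad(P_F Z_2(\mu))^k \, d\sigma_{G_{n,k}}(F) ,
\]
and since the orthogonal projection of an ellipsoid is again an ellipsoid, an elementary computation gives $\vrad(P_F Z_2(\mu))^k = \det(P_F \Cov(\mu) P_F|_F)^{1/2}$. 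From here I would treat the two inequalities separately.

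For the upper bound, I would apply Jensen's inequality (using concavity of $\sqrt{\cdot}$) to obtain
\[
\int_{G_{n,k}} \det(P_F \Cov(\mu) P_F|_F)^{1/2}\, d\sigma \leq \brac{\int_{G_{n,k}} \det(P_F \Cov(\mu) P_F|_F)\, d\sigma}^{1/2} ,
\]
and evaluate the right-hand integral as $e_k(\lambda_1,\ldots,\lambda_n)/\binom{n}{k}$, where $\lambda_1,\ldots,\lambda_n$ denote the eigenvalues of $\Cov(\mu)$ and $e_k$ is the $k$-th elementary symmetric polynomial. This identification follows by diagonalizing $\Cov(\mu)$ and applying the Cauchy--Binet formula: if $M$ is an $n \times k$ matrix whose columns form an orthonormal basis of $F$, then by $SO(n)$-invariance of $\sigma_{G_{n,k}}$ and the identity $\sum_{\abs{I}=k} \det(M_I)^2 = \det(M^T M) = 1$, the expectation of $\det(M_I)^2$ over a uniformly random $F$ equals $1/\binom{n}{k}$ for every fixed index set $I$. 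Maclaurin's inequality then yields $(e_k/\binom{n}{k})^{1/k} \leq e_1/n = \tr(\Cov(\mu))/n$, and combining the displays delivers the upper bound.

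For the lower bound, the cleanest route is to invoke the classical Aleksandrov inequality (a specialization of Aleksandrov--Fenchel; see e.g. \cite[Chapter 7]{Schneider-Book}), asserting that for every convex body $K \subset \Real^n$ the sequence $k \mapsto (W_k(K)/\abs{B_2^n})^{1/k}$ is non-increasing on $\set{1,\ldots,n}$ in the paper's convention (where $W_n(K) = \abs{K}$). Applied to $K = Z_2(\mu)$, this immediately gives
\[
\brac{\frac{W_k(Z_2(\mu))}{\abs{B_2^n}}}^{1/k} \geq \brac{\frac{W_n(Z_2(\mu))}{\abs{B_2^n}}}^{1/n} = \vrad(Z_2(\mu)) = \det(\Cov(\mu))^{1/(2n)} .
\]
Alternatively, the same bound follows directly from the log-concavity of $k \mapsto \log W_k(Z_2(\mu))$ (a consequence of the Aleksandrov--Fenchel inequalities $W_k^2 \geq W_{k-1} W_{k+1}$) together with the boundary values $W_0 = \abs{B_2^n}$ and $W_n = \abs{Z_2(\mu)}$.

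No step is substantively difficult; the one point requiring any care is the identification of the averaged determinant with the normalized elementary symmetric polynomial, which is a short Cauchy--Binet computation.
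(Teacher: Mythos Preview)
Your proof is correct. The lower bound argument is identical to the paper's (Alexandrov inequalities reducing to $k=n$, then the volume computation for the ellipsoid $Z_2(\mu)$). For the upper bound you take a genuinely different route: the paper simply applies the Alexandrov inequalities once more to reduce to $k=1$, obtaining $(W_k/\abs{B_2^n})^{1/k} \leq W_1/\abs{B_2^n} = M^*(Z_2(\mu))$, and then a single Cauchy--Schwarz step on $S^{n-1}$ gives $M^*(Z_2(\mu)) \leq (\int h_{Z_2(\mu)}^2 d\sigma)^{1/2} = (\frac{1}{n}\tr \Cov(\mu))^{1/2}$. Your path via Kubota, Jensen, Cauchy--Binet and Maclaurin is longer but has the merit of making explicit the intermediate bound $(W_k(Z_2(\mu))/\abs{B_2^n})^{1/k} \leq (e_k(\lambda)/\binom{n}{k})^{1/(2k)}$ that the paper alludes to in the paragraph preceding the lemma but does not prove; it also avoids using Alexandrov for the upper bound, relying instead on the purely arithmetic Maclaurin inequality. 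The paper's argument is more economical, but yours yields slightly more information.
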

\begin{proof}
By the Alexandrov inequalities for the quermassintegrals \cite[Chapter 6]{Schneider-Book}, we have:
\[
\brac{\frac{W_n(Z_2(\mu))}{\abs{B_2^n}}}^{\frac{1}{n}}  \leq \brac{\frac{W_k(Z_2(\mu))}{\abs{B_2^n}}}^{\frac{1}{k}} \leq \frac{W_1(Z_2(\mu))}{\abs{B_2^n}} ,
\]
so it is enough to calculate the expressions on either side. Indeed:
\begin{align*}
\frac{W_1(Z_2(\mu))}{\abs{B_2^n}} & = \int_{S^{n-1}} h_{Z_2(\mu)}(\theta) d\sigma(\theta) \leq \brac{\int_{S^{n-1}} h^2_{Z_2(\mu)}(\theta) d\sigma(\theta)}^{\frac{1}{2}} \\
&= \brac{\int_{S^{n-1}} \scalar{\Cov(\mu) \theta,\theta} d\sigma(\theta)}^{\frac{1}{2}} = \brac{\frac{1}{n} \tr  \; \Cov(\mu)}^{\frac{1}{2}} ,
\end{align*}
while:
\[
\brac{\frac{W_n(Z_2(\mu))}{\abs{B_2^n}}}^{\frac{1}{n}} = \brac{\frac{\abs{Z_2(\mu)}}{\abs{B_2^n}}}^{\frac{1}{n}} = \det \; \Cov(\mu)^{\frac{1}{2n}}  .
\]
\end{proof}

Finally, it is useful to note that by Lemma \ref{lem:Guedon}:
\[
\brac{ \tr \; \Cov(\mu)}^{\frac{1}{2}} = \brac{\int_{\Real^n} \abs{x}^2 d\mu(x)}^{\frac{1}{2}} = I_2(\mu) \simeq  I_1(\mu) . 
\]
We are now ready to prove Theorem \ref{thm:part2-ellipsoids}.

\begin{proof}[Proof of Theorem \ref{thm:part2-ellipsoids}]
\[
M(Z_p(\mu) , t I_1(\mu)  B_2^n) \leq \frac{\abs{Z_p(\mu) + t I_1(\mu) B_2^n} }{ \abs{t  I_1(\mu) B_2^n} } = \sum_{k=0}^n {n \choose k} \frac{W_k(Z_p(\mu))}{(t I_1(\mu))^k\abs{B_2^n}} .
\]
Employing Proposition \ref{prop:Zp-mixed} and Lemma \ref{lem:Z2-mixed}, we know that:
\[
\brac{\frac{W_k(Z_p(\mu))}{\abs{B_2^n}}}^{1/k} \leq C \max(\sqrt{p} , p / \sqrt{k}) \frac{I_1(\mu)}{\sqrt{n}} \;\;\; \forall k=1,\ldots,n . 
\]
Using the standard estimate ${n \choose k} \leq \brac{\frac{e n}{k}}^k$, we obtain:
\begin{align*}
M(Z_p(\mu) , t I_1(\mu) B_2^n) & \leq 1 + \sum_{k=1}^n \brac{C \frac{e \sqrt{n}}{t k} \max(\sqrt{p},p / \sqrt{k}) }^k \\
& = 1 + \sum_{k=1}^{\floor{p}} \brac{C \frac{e p \sqrt{n}}{t k^{3/2}} }^k +  \sum_{k=\floor{p}+1}^{n} \brac{C \frac{e \sqrt{p} \sqrt{n}}{t k} }^k \\
& =  1 + \sum_{k=1}^{\floor{p}} \brac{C' \frac{p^{2/3} n^{1/3}}{t^{2/3} k} }^{\frac{3}{2} k} +  \sum_{k=\floor{p}+1}^{n} \brac{C \frac{e \sqrt{p} \sqrt{n}}{t k} }^k \\
& \leq \sum_{m=0}^{\infty} \frac{1}{m!} \brac{C'' \frac{p^{2/3} n^{1/3}}{t^{2/3}} }^{m} + \sum_{k=0}^{\infty} \frac{1}{k!} \brac{C'' \frac{\sqrt{p} \sqrt{n}}{t} }^k \\
& = \exp \brac{ C'' \frac{p^{2/3} n^{1/3}}{t^{2/3}} + C'' \frac{\sqrt{p} \sqrt{n}}{t} } . 
\end{align*}
The assertion is thus established for $\Eps = B_2^n$, and hence for arbitrary ellipsoids, as explained above. The proof is complete.
\end{proof}

\section{Part 1' - Separation Dimension Reduction: Ellipsoids} \label{sec:part1-ellipsoids}

\subsection{The Probabilistic Approach}

The following proposition decouples the question of separation dimension reduction and the massiveness requirement of Part 1' of The Program. Not surprisingly, this is achieved by introducing some randomness. 

\begin{prop} \label{prop:part1-prob}
Let $K \subset \Real^n$ denote a star-body, and assume that $\set{x_i}_{i=1,\ldots, M} \subset \Real^n$ is a collection of $K$-separated points. Let $T : \Real^n \rightarrow \Real^m$ denote a random linear map and $L,S \subset \Real^m$ denote two random star-bodies defined on a common probability space, so that $L = L_T$ and $S = S_T$ are measurable functions of $T$ (when equipping the family of star-bodies with the Hausdorff metric). Assume that:
\begin{enumerate}
\item If $x \notin K$ then $\P( T x \in L_T) \leq p_{\text{out}}$. 
\item If $x \in K$ then $\P(T x \in S_T) \geq p_{\text{in}}$.
\end{enumerate}
Then for any Borel probability measure $\mu$ on $\Real^n$, if:
\begin{equation} \label{eq:prob-condition}
M^2 p_{\text{out}} \leq \mu(K) p_{\text{in}} ,
\end{equation}
then there exist a linear map $T^0 : \Real^n \rightarrow \Real^m$ and star-bodies $L^0,S^0 \subset \Real^m$ so that $\set{T^0(x_i)}_{i=1,\ldots,M}$ are $L^0$-separated and $T^0_*(\mu)(S^0) \geq \frac{1}{2} \mu(K) p_{\text{in}}$. 
\end{prop}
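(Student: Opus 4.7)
The plan is to run a first-moment / probabilistic method that simultaneously controls the separation and massiveness requirements on the underlying probability space. First, I would introduce two random variables associated with the random map $T$:
\[
N := \#\set{ \set{i,j} \subset \set{1,\ldots,M} \; ; \; i \neq j , \; T(x_i - x_j) \in L_T } \;\; \text{and} \;\; Y := T_*\mu(S_T) ,
\]
so that $N$ counts the number of ``bad'' (unordered) pairs and $Y$ is the random $T_*\mu$-mass of $S_T$. The desired conclusion amounts to finding a realization in the event $\set{N = 0} \cap \set{Y \geq \tfrac{1}{2}\mu(K) p_{\text{in}}}$, at which point one sets $L^0 := L_{T^0}$ and $S^0 := S_{T^0}$.

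Next, I would estimate both expectations using the two hypotheses. Since the points $\set{x_i}$ are $K$-separated, $x_i - x_j \notin K$ for all $i \neq j$, so hypothesis (1) and a union bound over unordered pairs (using the origin-symmetry of $L_T$, which makes the bad event symmetric in $(i,j)$) yield
\[
\E[N] \leq \binom{M}{2} p_{\text{out}} \leq \tfrac{M^2}{2} p_{\text{out}} .
\]
In parallel, Fubini and hypothesis (2) give
\[
\E[Y] = \int \P(T x \in S_T) \, d\mu(x) \geq \int_K p_{\text{in}} \, d\mu(x) = \mu(K) \, p_{\text{in}} .
\]

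To produce the desired realization, I would combine these bounds. Setting $A := \set{N = 0}$, since $Y \leq 1$ (because $\mu$ is a probability) and by Markov $\P(A^c) \leq \E[N]$,
\[
\E[Y \mathbf{1}_A] = \E[Y] - \E[Y \mathbf{1}_{A^c}] \geq \mu(K) p_{\text{in}} - \E[N] \geq \tfrac{1}{2} \mu(K) p_{\text{in}}
\]
by the standing assumption $M^2 p_{\text{out}} \leq \mu(K) p_{\text{in}}$. As $Y \mathbf{1}_A$ is a bounded non-negative random variable with expectation at least $\tfrac{1}{2}\mu(K) p_{\text{in}}$, its essential supremum is at least this value, producing a realization $T^0$ on which $A$ holds and $Y \geq \tfrac{1}{2} \mu(K) p_{\text{in}}$.

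The main obstacle is purely at the level of constants. Obtaining the factor $\binom{M}{2} = M(M-1)/2$ (rather than the ordered count $M(M-1)$) is essential and depends on the symmetry of $L_T$, which ensures $\set{T(x_i - x_j) \in L_T} = \set{T(x_j - x_i) \in L_T}$. Moreover, deducing the conclusion with exactly $\tfrac{1}{2}\mu(K)p_{\text{in}}$ (as opposed to any strictly smaller value) from the bound on $\E[Y \mathbf{1}_A]$ is a boundary case that is handled either by left-continuity of the tail distribution function of $Y \mathbf{1}_A$, or by running the argument with infinitesimal slack in the hypothesis and passing to a limit.
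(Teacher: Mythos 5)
Your proof is correct and follows essentially the same route as the paper's: a union bound over pairs for the separation event, Fubini for $\E[T_*\mu(S_T)]$, and a first-moment argument to intersect the two. The only (immaterial) difference is in the final step, where you bound $\E[Y\mathbf{1}_{\{N=0\}}]$ from below directly, whereas the paper notes that $\P(N=0) > 1 - \tfrac{M^2}{2}p_{\text{out}}$ and $\P\bigl(Y \geq \tfrac{q}{2}\bigr) \geq \tfrac{q}{2}$ sum to more than $1$; both versions rely on the same unordered-pair count, for which your explicit appeal to the symmetry of $L_T$ is a fair reading of what the paper leaves implicit.
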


\begin{proof}
We may assume that $p_{\text{in}}, p_{\text{out}},\mu(K) > 0$. 
By linearity and the union-bound, the random set $\set{T(x_i)}_{i=1,\ldots,M}$ is clearly $L_T$-separated with probability at least:
\[
1 - {M \choose 2} p_{\text{out}} > 1 - \frac{M^2}{2} p_{\text{out}} ,
\]
so it remains to verify the second requirement. Denoting $G_T := T_*(\mu)(S_T)$, note that:
\[
\E(G_T) = \E \brac{ \int_{\Real^n} 1_{\set{T x \in S_T}} d\mu(x) } = \int_{\Real^n} \P(T x \in S_T) d\mu(x) \geq \int_{K} \P(T x \in S_T) d\mu(x) \geq \mu(K) p_{\text{in}} =: q .
\]
Since $0 \leq G_T \leq 1$ and $\E(G_T) \geq q$, it follows that $\P( G_T \geq q/2) \geq q/2$. Consequently, the assumption (\ref{eq:prob-condition}) guarantees that the event that $\set{T(x_i)}_{i=1,\ldots,M}$ are $L_T$-separated and the one that $G_T \geq q/2$ have non-empty intersection, yielding the claim. 
\end{proof}

\subsection{Part 1' For Ellipsoids}

In view of Proposition \ref{prop:part1-prob}, Part 1' of The Program will follow from the following one-sided variant of the Johnson--Lindenstrauss lemma, which pertains to small-ball probability, see e.g. \cite[Fact 3.2]{MilmanSzarek-GeometricLemma} or \cite[Lemma 8.1.15]{GreekBook}:
 
\begin{lem} \label{lem:small-ball}
Let $T : \Real^n \rightarrow \Real^m$ denote a random orthogonal projection, that is $T = P \circ U$ where $U$ is uniformly distributed on $SO(n)$ and $P$ is the canonical projection on the first $m$ coordinates, $m=1,\ldots,n$. Then for all $x \in S^{n-1}$:
\[
\P\brac{ \sqrt{n/m}\abs{T x}\leq \eps } \leq (C' \eps)^m \;\;\; \forall \eps \in [0,1] .
\] 
\end{lem}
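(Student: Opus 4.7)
The plan is to exploit rotational invariance of the Haar measure to reduce the problem to an explicit small-ball estimate for a Beta random variable. For any fixed $x \in S^{n-1}$, by $SO(n)$-invariance the random vector $Tx = PUx$ has the same distribution as $P\theta$ where $\theta$ is uniform on $S^{n-1}$. Representing $\theta = g/\abs{g}$ with $g = (g_1, \ldots, g_n)$ a standard Gaussian vector on $\Real^n$, this yields the distributional identity
\[
\abs{Tx}^2 \stackrel{d}{=} \frac{g_1^2 + \cdots + g_m^2}{g_1^2 + \cdots + g_n^2} = \frac{X}{X+Y},
\]
where $X \sim \chi_m^2$ and $Y \sim \chi_{n-m}^2$ are independent. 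Hence $\abs{Tx}^2$ follows a $\mathrm{Beta}(m/2, (n-m)/2)$ distribution, with density proportional to $t^{m/2-1}(1-t)^{(n-m)/2-1}$ on $(0,1)$.

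The next step is to estimate the Beta small-ball probability by direct integration. Assuming for the moment $m \leq n-2$ so that $(n-m)/2 - 1 \geq 0$, the factor $(1-t)^{(n-m)/2-1}$ is bounded by $1$, and therefore
\[
\P\brac{\abs{Tx}^2 \leq s} \leq \frac{\Gamma(n/2)}{\Gamma(m/2)\Gamma((n-m)/2)} \int_0^s t^{m/2-1} \, dt = \frac{\Gamma(n/2)}{\Gamma(m/2+1)\Gamma((n-m)/2)} \, s^{m/2}.
\]
Specializing to $s = m\eps^2/n$ and invoking the two Stirling-type estimates $\Gamma(n/2)/\Gamma((n-m)/2) \leq C(n/2)^{m/2}$ (from the Pochhammer product when $m/2$ is an integer, and from the identity $(1 + m/(n-m))^{(n-m)/2} \leq e^{m/2}$ combined with Stirling in general) together with $1/\Gamma(m/2+1) \leq C(2e/m)^{m/2}$, the right-hand side simplifies to $C''(e\eps^2)^{m/2} \leq (C'\eps)^m$, as claimed.

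Finally, the edge cases $m \in \set{n-1,n}$ require only cosmetic adjustments. When $m = n$, $T$ is orthogonal so $\abs{Tx} = 1$ and the statement is vacuous for $\eps < 1$. When $m = n-1$, the exponent $(n-m)/2 - 1 = -1/2$ is negative but $(1-t)^{-1/2}$ is integrable: one either restricts to $s \leq 1/2$, where $(1-t)^{-1/2} \leq \sqrt{2}$, or observes that for larger $s$ (which forces $\eps$ bounded away from $0$) the target bound $(C'\eps)^m$ exceeds $1$ after enlarging $C'$, and the inequality is trivial. The only real point requiring any care in the whole argument is the Gamma-ratio bookkeeping — namely, verifying that the bound $\Gamma(n/2)/\Gamma((n-m)/2) \leq C(n/2)^{m/2}$ holds with a \emph{universal} constant $C$ independent of $n$ and $m$, uniformly in the parity of $m$ — but this is standard and presents no conceptual obstacle.
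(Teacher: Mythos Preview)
Your argument is correct: the reduction to a $\mathrm{Beta}(m/2,(n-m)/2)$ random variable via the Gaussian representation of the uniform measure on $S^{n-1}$ is standard, and your small-ball estimate via direct integration together with the Stirling-type bounds on the Gamma ratios goes through as written. The edge cases $m \in \set{n-1,n}$ are handled adequately.

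As for comparison with the paper: the paper does \emph{not} supply its own proof of this lemma. It is stated as a known fact and deferred to external references (Milman--Szarek and the Brazitikos--Giannopoulos--Valettas--Vritsiou book). Your proof is precisely the kind of direct computation one would expect to find in those references, so there is no meaningful methodological divergence to discuss.
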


\begin{cor}[Part 1' for Euclidean Ball] \label{cor:part1-ellipsoids}
Let $\set{x_i}_{i=1,\ldots, e^k} \subset \Real^n$ be a collection of $B_2^n$-separated points, $k \in [1,n]$, and let $\mu$ denote a Borel probability measure with $\mu(B_2^n) \geq e^{-q} \geq e^{-k}$. Set $m = \lceil k \rceil$, and denote $L := \sqrt{2} \sqrt{m/n} B_2^m$. Then there exists an orthogonal projection $T : \Real^n \rightarrow \Real^m$ (as above) so that:
\begin{enumerate}
\item $\set{T(x_i)}_{i=1,\ldots, e^k} \subset \Real^m$ are $L/C$-separated.
\item $T_*(\mu)(L) \geq \frac{1}{4} e^{-q}$. 
\end{enumerate}
\end{cor}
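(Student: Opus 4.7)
The plan is to apply Proposition~\ref{prop:part1-prob} directly, taking $K = B_2^n$ (which plays both the role of the separating body for $\{x_i\}$ and of the set whose $\mu$-mass is controlled), and $T = P \circ U$ the random orthogonal projection described in Lemma~\ref{lem:small-ball}. We use the constant (in $T$) star-bodies $L_T := L/C$ and $S_T := L$, where $L = \sqrt{2}\,\sqrt{m/n}\,B_2^m$ and $C \geq 1$ is a large universal constant to be fixed at the end. The two hypotheses of the proposition then split cleanly: (1) is a small-ball one-sided Johnson--Lindenstrauss estimate, and (2) is an elementary second-moment upper bound.

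To verify~(1), fix $x \notin B_2^n$, so $|x| > 1$, and set $\hat x = x/|x| \in S^{n-1}$. By homogeneity,
\[
\P(Tx \in L/C) = \P\brac{\sqrt{n/m}\,|T\hat x| \leq \tfrac{\sqrt{2}}{C|x|}} \leq \P\brac{\sqrt{n/m}\,|T\hat x| \leq \tfrac{\sqrt{2}}{C}} \leq (C_0/C)^m,
\]
where the final estimate is Lemma~\ref{lem:small-ball} with $\eps = \sqrt{2}/C$ and $C_0 := C'\sqrt{2}$; so we may take $p_{\text{out}} = (C_0/C)^m$. For~(2), fix $x \in B_2^n$, so $|x| \leq 1$; the standard rotation-invariant identity gives $\E\,|Tx|^2 = (m/n)\,|x|^2 \leq m/n$, whence by Markov
\[
\P\brac{|Tx| > \sqrt{2}\,\sqrt{m/n}} \leq \frac{\E\,|Tx|^2}{2m/n} \leq \tfrac{1}{2},
\]
so one may take $p_{\text{in}} = 1/2$.

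It remains to check Proposition~\ref{prop:part1-prob}'s hypothesis $M^2 p_{\text{out}} \leq \mu(K)\,p_{\text{in}}$, which here becomes $e^{2k}(C_0/C)^m \leq \tfrac{1}{2}e^{-q}$. Using $m = \lceil k \rceil \geq k$, $q \leq k$, and $k \geq 1$, this reduces to $(e^3 C_0/C)^k \leq \tfrac{1}{2}$, which holds once $C \geq 2 e^3 C_0$. The proposition then yields an orthogonal projection $T$ for which $\{T(x_i)\}$ are $L/C$-separated and $T_*\mu(L) \geq \tfrac{1}{2}\,\mu(B_2^n)\cdot\tfrac{1}{2} \geq \tfrac{1}{4}\,e^{-q}$, as required. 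The crux is that one must invoke the small-ball exponent $(C'\eps)^m$ of Lemma~\ref{lem:small-ball} rather than the Gaussian bound $\exp(-cm\eps^2)$ of Lemma~\ref{lem:JL}: only the former lets $\eps = \sqrt{2}/C$ stay an $O(1)$ constant while still beating the union-bound factor $M^2 = e^{2k}$ with a dimension as small as $m \simeq k$. The assumption $q \leq k$ serves only to absorb the mass-loss factor $e^{-q}$ into the same exponential budget, and beyond this balancing there is no further obstacle.
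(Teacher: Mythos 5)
Your proposal is correct and follows essentially the same route as the paper: both apply Proposition \ref{prop:part1-prob} with $K = B_2^n$, $L_T \equiv L/C$, $S_T \equiv L$, using the small-ball estimate of Lemma \ref{lem:small-ball} for $p_{\text{out}}$ and the second-moment/Markov bound for $p_{\text{in}} = 1/2$. The only difference is bookkeeping: the paper fixes $p_{\text{out}} := \tfrac{1}{2}e^{-3m}$ up front by choosing the small-ball radius constant, whereas you carry $p_{\text{out}} = (C_0/C)^m$ and verify the condition $M^2 p_{\text{out}} \leq \mu(K)\,p_{\text{in}}$ at the end --- the arithmetic is equivalent.
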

\begin{proof}
By appropriately choosing $c>0$, we may ensure that:
\begin{enumerate}
\item 
$\P(\abs{T x} \leq c \sqrt{m/n}) \leq p_{\text{out}} := \frac{1}{2} e^{-3m}$ for any $x \notin B_2^n$.
\item 
$\P(\abs{T x} \leq \sqrt{2} \sqrt{m/n}) \geq p_{\text{in}} := \frac{1}{2}$ for any $x \in B_2^n$. 
\end{enumerate}
Indeed, the first estimate is ensured by Lemma \ref{lem:small-ball}, while the second one follows by simply noting that $\E \abs{T x}^2 = \frac{m}{n} \abs{x}^2$ and applying the Markov--Chebyshev inequality (or by invoking the Johnson--Lindenstrauss Lemma \ref{lem:JL}, but this is actually unnecessary). 
The assertion then follows by Proposition \ref{prop:part1-prob} with $C = \sqrt{2} / c$, $L_T \equiv L/C$ and $S_T \equiv L$. 
\end{proof}

\subsection{Running The Program for Ellipsoids}

Running (the regular version of) The Program of Section \ref{sec:full-program}, we can finally obtain:

\begin{thm}[Generalized Regular Dual Sudakov For Ellipsoids] \label{thm:RegularSudakovEllipsoids}
For any origin-symmetric log-concave measure $\mu$ on $\Real^n$ and any (origin-symmetric) ellipsoid $\Eps \subset \Real^n$, we have:
\[
M(Z_p(\mu) , t m_1(\mu, \Eps) \Eps) \leq \exp \brac{C \brac{ \frac{p}{t^2} + \frac{p}{t} } } \;\;\; \forall p \geq 1 \;\; \forall t > 0 .
\]
\end{thm}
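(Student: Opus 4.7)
The plan is to apply Theorem \ref{thm:full-program} with the family $\L_m$ taken to be all origin-symmetric ellipsoids in $\Real^m$. Since both sides of the target inequality are invariant under simultaneously applying a non-singular linear map to $\mu$ and $\Eps$ (using $Z_p(U_*\mu) = U Z_p(\mu)$ and the corresponding transformation law for $m_1$), I first reduce to $\Eps = B_2^n$, and then rescale the measure so that $m_1(\mu, B_2^n) = 1$; equivalently, $\mu(B_2^n) = 1/e$, and we take $K = B_2^n \in \L_n$. It then suffices to verify the three Parts of the (full) Program for this data.

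Part 1' is provided directly by Corollary \ref{cor:part1-ellipsoids}: applied to the maximal collection of $K$-separated points in $\frac{1}{tR} Z_p(\mu)$, a random orthogonal projection onto a subspace of dimension $m = \lceil k \rceil$ simultaneously yields an $L/R$-separated image and keeps $L := \sqrt{2}\sqrt{m/n} B_2^m \in \L_m$ massive, with $T_*\mu(L) \geq 1/(4e)$. The critical feature for our application is that the normalization $\mu(K) = 1/e$ forces the associated $q_m \leq 1 + \log 4$ to be a \emph{universal constant}, so all four parameters $A,B,D,R$ in the Program can be taken universal. Part 3 is supplied by Theorem \ref{thm:part3}.

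The heart of the argument is Part 2, via Theorem \ref{thm:part2-ellipsoids}. Given any ellipsoid $L \in \L_m$ and log-concave $\nu \in \M_m$ with $\nu(L) \geq e^{-q_m}$, one has $m_{q_m}(\nu, L) \leq 1$, so Lemma \ref{lem:Guedon} yields $I_1(\nu, L) \simeq m_1(\nu, L) \leq C e^{q_m} m_{q_m}(\nu, L) \leq C e^{q_m}$, a universal constant in our setting. Applying Theorem \ref{thm:part2-ellipsoids} to $\nu$ and $L$ at the scale $s := t/I_1(\nu, L) \simeq t$ and collecting terms,
\[
M(Z_p(\nu), t L) \leq \exp\brac{m\,\varphi_t(p/m)}, \qquad \varphi_t(x) := C\brac{\frac{x^{2/3}}{t^{2/3}} + \frac{\sqrt{x}}{t}},
\]
and one checks immediately that $\varphi_t(0) = 0$, that $\varphi_t$ is increasing, and that $x \mapsto \varphi_t(x)/x$ is non-increasing, as required. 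This establishes Part 2 with $B = 0$.

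Invoking Theorem \ref{thm:full-program} now yields $M(Z_p(\mu), tRB_2^n) \leq \exp(C_{A,B,D,\varphi_t,t}\,p)$, so the only task remaining is to compute $1/\varphi_t^{-1}(c_0)$ at the universal constant $c_0 = 1/(4A)$. Substituting $x = ut^2$ turns $\varphi_t(x) = c_0$ into $u^{2/3} t^{2/3} + \sqrt{u} \simeq 1$, which gives $\varphi_t^{-1}(c_0) \simeq \min(c_0^{3/2} t,\, c_0^2 t^2)$ and hence $1/\varphi_t^{-1}(c_0) \simeq 1/t + 1/t^2$. Undoing the two rescalings (absorbing $R$ and restoring $m_1(\mu, B_2^n)$ and the original $\Eps$) produces the announced bound $\exp(C(p/t^2 + p/t))$. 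The most delicate point is this final inversion: the two terms in $\varphi_t$ dominate in opposite regimes of $t$, and the crossover is precisely what produces both the $p/t^2$ term (small $t$, where $\sqrt{x}/t$ dominates) and the $p/t$ term (large $t$, where $x^{2/3}/t^{2/3}$ dominates) -- mirroring the two terms that Theorem \ref{thm:part2-ellipsoids} itself supplies.
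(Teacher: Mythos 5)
Your argument is correct and follows essentially the same route as the paper's proof: reduce to $\Eps = B_2^n$ and normalize $m_1(\mu,B_2^n)=1$, verify Part 1' via Corollary \ref{cor:part1-ellipsoids}, Part 2 via Theorem \ref{thm:part2-ellipsoids} combined with Lemma \ref{lem:Guedon} (yielding the same $\varphi_t(x) \simeq (x/t)^{2/3} + \sqrt{x}/t$), Part 3 via Theorem \ref{thm:part3}, and then invert $\varphi_t$ to get $1/\varphi_t^{-1}(c_0) \simeq 1/t + 1/t^2$. The only cosmetic discrepancy is your claim that Part 2 holds ``with $B=0$'': the Program stipulates $B \geq 1$, but taking $B=1$ costs nothing, so this is immaterial.
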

\begin{proof}
Since the expression on the left-hand-side is invariant under simultaneously applying a linear transformation to $\mu$ and $\Eps$, it is enough to establish it for the case that $\Eps = B_2^n$. Since this expression is also invariant under scaling $\mu$, we may assume that $m_1(\mu,B_2^n) = 1$. 

Given $p \geq 1$ and $t > 0$, we run The Program for $K = B_2^n$, with $\L_m$ consisting of (centered) Euclidean balls in $\Real^m$. Corollary \ref{cor:part1-ellipsoids} applied with $q=1$ verifies Part 1' of The Program regarding Massive Separation Dimension Reduction (with say $D=1$, $B=1$, $q_m=3$, $A = 5/4$ and $R \leq C'$). Part 2 of The Program regarding Weak Generalized Regular Dual Sudakov, with parameters $q_m = 3$ and $\varphi_t(x) = C'' \max((x/t)^{2/3}, \sqrt{x} / t)$, is established by Theorem \ref{thm:part2-ellipsoids} in conjunction with Lemma \ref{lem:Guedon} (which implies that $m_3(\nu,L) \simeq I_1(\nu,L)$ for all $\nu \in \M_m$ and $L \in \L_m$). 
Since $\varphi_t^{-1}(y) \simeq \min(y^{3/2} t, y^2 t^2)$, we have $\varphi_t^{-1}(1/(4A)) \simeq \min(t,t^2)$, and Theorem \ref{thm:full-program} yields the asserted estimate. 
\end{proof}

\section{Pure Measures} \label{sec:pure}

In this section, we introduce the class of pure log-concave probability measures, and study their properties. 

\subsection{Definitions}
Recall that the isotropic constant of a probability measure $\mu$ on $\Real^n$ having log-concave density $f_\mu$ is defined as the following affine-invariant quantity:
\begin{equation} \label{eq:Lmu}
L_\mu := (\max f_\mu)^\frac{1}{n} (\det \; \Cov(\mu))^{\frac{1}{2n}} ~. 
\end{equation}
It is well-known (e.g \cite{Milman-Pajor-LK,Klartag-Psi2}) that $L_\mu \geq c$ for some universal constant $c > 0$. See Bourgain \cite{BourgainMaximalFunctionsOnConvexBodies,Bourgain-LK}, Milman--Pajor \cite{Milman-Pajor-LK}, Ball \cite{Ball-PhD} and Brazitikos--Giannopoulos--Valettas--Vritsiou \cite{GreekBook}
for background on the yet unresolved Slicing Problem, which is concerned with obtaining a dimension independent upper-bound on $L_\mu$. The current best-known estimate $L_\mu \leq C n^{1/4}$ is due to B. Klartag \cite{KlartagPerturbationsWithBoundedLK}, who improved the previous estimate $L_\mu \leq C n^{1/4} \log(1+n)$ of J. Bourgain \cite{Bourgain-LK} (proven when $\mu$ is the uniform measure on an origin-symmetric convex body, but valid for general log-concave probability measures, see \cite{Ball-PhD, KlartagPerturbationsWithBoundedLK}); see also Klartag--Milman \cite{KlartagEMilmanLowerBoundsOnZp} and Vritsiou \cite{Vritsiou-ExtendingKM} for subsequent refinements.

The following key estimate, which plays a fundamental role in previous groundbreaking works of Paouris \cite{Paouris-IsotropicTail,PaourisSmallBall} and Klartag \cite{Klartag-Psi2}, relates between $\abs{Z_n(\mu)}$ and $L_\mu$  (see e.g. the proof of \cite[Theorem 2.1]{EMilman-IsotropicMeanWidth}):
\begin{thm}[Paouris, Klartag] \label{thm:Zn}
Let $\mu$ denote a log-concave probability measure on $\Real^n$ with barycenter at the origin. Then:
\[
\volrad(Z_n(\mu)) \simeq \frac{\sqrt{n}}{L_\mu} \vrad(Z_2(\mu)) = \sqrt{n} \; \frac{\det \; \Cov(\mu)^{\frac{1}{2n}}}{L_\mu} = \frac{\sqrt{n}}{\max f_\mu^{1/n}} ~.
\]
In other words, if $\mu$ is isotropic then $\abs{Z_n(\mu)}^{1/n} \simeq \frac{1}{L_\mu}$. 
\end{thm}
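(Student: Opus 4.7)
The plan is to reduce the volume-radius of $Z_n(\mu)$ to that of $K_n(\mu)$, and then exploit the volumetric identity $\abs{K_n(\mu)} = 1/\max f_\mu$ obtained by polar integration of the definition of $K_n(\mu)$.

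First, I would observe that the last two equalities in the claimed chain are purely algebraic. Since $Z_2(\mu) = \Cov(\mu)^{1/2}(B_2^n)$, one has $\vrad(Z_2(\mu)) = (\det \Cov(\mu))^{1/(2n)}$, and by the very definition of the isotropic constant $L_\mu = (\max f_\mu)^{1/n}(\det \Cov(\mu))^{1/(2n)}$ in (\ref{eq:Lmu}), the three expressions on the right are equal. So it suffices to prove $\vrad(Z_n(\mu)) \simeq \sqrt{n}/(\max f_\mu)^{1/n}$, or equivalently that $\abs{Z_n(\mu)}^{1/n} \simeq \abs{K_n(\mu)}^{1/n}$.

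Next, I would apply the identity (\ref{eq:ZpKp}) with $p=n$, which yields
\[
\abs{Z_n(\mu)}^{1/n} = \brac{\frac{\abs{K_{2n}(\mu)}}{\abs{K_n(\mu)}}}^{1/n} \abs{Z_n(\lambda_{K_{2n}(\mu)})}^{1/n}.
\]
The inclusion (\ref{eq:Kpq}) with $p=n, q=2n$ gives $K_n(\mu) \subset K_{2n}(\mu) \subset C K_n(\mu)$ with a universal constant $C$, so the volumetric prefactor is of order $1$. Then I would invoke (\ref{eq:ZnLambda}), namely $Z_n(\lambda_K) \simeq \text{conv}(K \cup -K)$ for any convex body $K$, applied to $K = K_{2n}(\mu)$. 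The remaining task is to compare $\abs{\text{conv}(K_{2n}(\mu) \cup -K_{2n}(\mu))}^{1/n}$ with $\abs{K_{2n}(\mu)}^{1/n}$. The trivial bound gives $\abs{\text{conv}(K \cup -K)} \geq \abs{K}$; for the reverse direction one uses the Rogers--Shephard/Milman--Pajor type inequality asserting that if the barycenter of $K$ is at the origin, then $\abs{\text{conv}(K \cup -K)} \leq C^n \abs{K}$. Thus $\abs{Z_n(\mu)}^{1/n} \simeq \abs{K_n(\mu)}^{1/n} = 1/(\max f_\mu)^{1/n}$, and dividing by $\abs{B_2^n}^{1/n} \simeq 1/\sqrt{n}$ yields the claimed estimate on the volume radius.

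The main obstacle is the symmetrization step, i.e.\ justifying $\abs{\text{conv}(K_{2n}(\mu) \cup -K_{2n}(\mu))}^{1/n} \simeq \abs{K_{2n}(\mu)}^{1/n}$ for general (non-symmetric) log-concave $\mu$. This requires first verifying that the barycenter of $K_{2n}(\mu)$ sits at a distance from the origin that is controlled by $\vrad(K_{2n}(\mu))$ (an averaging fact that follows from $\mu$ having barycenter at the origin), after which the Milman--Pajor lemma applies up to universal constants. As indicated after (\ref{eq:ZnKn}) in the excerpt, one may alternatively route the argument through the one-sided variants $Z_n^+(\mu)$, which bypass the need to take the convex hull with $-K_{2n}(\mu)$ altogether; either device yields the missing comparison. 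All other ingredients are taken verbatim from the formulas already assembled in the preceding sections.
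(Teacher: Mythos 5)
Your argument is correct. Note first that the paper itself does not prove Theorem \ref{thm:Zn}: it is attributed to Paouris and Klartag with a pointer to the proof of Theorem 2.1 in the cited work of E.~Milman, so there is no in-paper proof to compare against. That said, your route is exactly the chain of identities the paper assembles elsewhere for related purposes -- it is the non-symmetric analogue of (\ref{eq:ZnKn}) and of the inclusion $Z_n(\mu) \supset c\, K_{2n}(\mu)$ derived inside the proof of Lemma \ref{lem:ZnHuge} -- combined with the polar-integration identity $\abs{K_n(\mu)} = 1/\max f_\mu$; the reduction of the two right-hand equalities to the definition (\ref{eq:Lmu}) and to $\vrad(Z_2(\mu)) = \det \Cov(\mu)^{1/(2n)}$ is also correct. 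The one place where you overcomplicate matters is the symmetrization step: you do not need to locate the barycenter of $K_{2n}(\mu)$ or invoke Milman--Pajor. Since $\mu$ has barycenter at the origin, $f_\mu(0) \geq e^{-n} \max f_\mu > 0$ (Fradelizi), so $0$ lies in the interior of $K_{2n}(\mu)$; consequently $\mathrm{conv}(K \cup -K) \subset K - K$ for $K = K_{2n}(\mu)$, and the Rogers--Shephard inequality $\abs{K-K} \leq 4^n \abs{K}$ gives the upper bound, while $\abs{\mathrm{conv}(K \cup -K)} \geq \abs{K}$ is trivial. With that substitution your proof is complete and relies only on facts the paper has already stated and justified ((\ref{eq:ZpKp}), (\ref{eq:Kpq}), (\ref{eq:ZnLambda})), so there is no circularity.
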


\begin{defn}[$h$-pure measure]
Let $\mu$ denote a probability measure on $\Real^n$ with barycenter at the origin. We will say that $\mu$ is $h$-pure ($h=1,\ldots,n$), with constants $(A,B)$, if the following two conditions hold:
\begin{enumerate}
\item $Z_n(\mu) \supset \frac{1}{A} \sqrt{h} Z_2(\mu)$. 
\item For all $E \in G_{n,m}$ with $m = h,\ldots,n$, we have that $L_{\pi_E \mu} \leq B$. 
\end{enumerate}
When $\mu$ is $h$-pure with some universally bounded constants $A,B \leq C < \infty$, we will simply say that $\mu$ is $h$-pure (with implicitly bounded constants). 
\end{defn}

Note that in the second condition, only the marginals of $\mu$ of dimension not smaller than $h$ are taken into account. For example, if $\mu$ is isotropic log-concave and $Z_n(\mu) \supset \frac{1}{A} \sqrt{n} B_2^n$, it follows from Theorem \ref{thm:Zn} that $L_\mu \leq C A$, and hence $\mu$ is $n$-pure (with constants $(A,CA)$). On the other hand, if all marginals of $\mu$ (of arbitrary dimension) have isotropic constant bounded by a universal constant $C>0$, since $Z_n(\mu) \supset Z_2(\mu)$ (if $n \geq 2$, and up to a constant otherwise), we see that $\mu$ is $1$-pure (with constants $(1,C)$). The Slicing Problem may be equivalently restated as asking whether all log-concave probability measures on $\Real^n$ ($n \geq 2$) are $1$-pure with constants $(1,C)$, for some universal constant $C>0$ independent of $n$.  

\medskip
The following is immediate from the definition:
\begin{lem}
If $\mu$ is both $h_1$-pure and $h_2$-pure with $1 \leq h_1 < h_2 \leq n$, then it is also $h$-pure for all $h=h_1,\ldots,h_2$. 
\end{lem}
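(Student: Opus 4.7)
The statement is essentially an elementary observation once one parses the two conditions defining $h$-purity correctly, noting that they scale in opposite directions with $h$. The plan is to verify each of the two conditions for $h$-purity separately, using $h_2$-purity for the first condition and $h_1$-purity for the second.

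Suppose $\mu$ is $h_1$-pure with constants $(A_1,B_1)$ and $h_2$-pure with constants $(A_2,B_2)$, and fix $h \in \{h_1,\ldots,h_2\}$. For condition (1) of $h$-purity, I would observe that since $Z_2(\mu)$ is an origin-symmetric ellipsoid (a convex body) and $h \leq h_2$, one has $\frac{1}{A_2}\sqrt{h}\,Z_2(\mu) \subset \frac{1}{A_2}\sqrt{h_2}\,Z_2(\mu) \subset Z_n(\mu)$, where the last inclusion is exactly condition (1) for $h_2$-purity. Hence condition (1) of $h$-purity holds with constant $A = A_2$.

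For condition (2), the key point is that if $h_1 \leq h$, then the set of dimensions $\{h,h+1,\ldots,n\}$ is a subset of $\{h_1,h_1+1,\ldots,n\}$. Thus condition (2) for $h_1$-purity, which bounds $L_{\pi_E \mu} \leq B_1$ for all $E \in G_{n,m}$ with $m \in \{h_1,\ldots,n\}$, immediately yields the same bound for the smaller range $m \in \{h,\ldots,n\}$. So condition (2) of $h$-purity holds with constant $B = B_1$.

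Combining, $\mu$ is $h$-pure with constants $(A_2,B_1)$. There is no real obstacle here; the only thing to watch is the direction of monotonicity of each condition in $h$ (the first becomes \emph{stronger} with larger $h$, the second becomes \emph{stronger} with smaller $h$), which is exactly why purity at both endpoints forces purity at every intermediate value.
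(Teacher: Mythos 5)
Your proof is correct and is precisely the argument the paper has in mind — the paper simply states the lemma as ``immediate from the definition,'' and your verification (condition (1) of $h$-purity from $h_2$-purity via $\sqrt{h}\leq\sqrt{h_2}$ and star-shapedness of $Z_2(\mu)$, condition (2) from $h_1$-purity via inclusion of the dimension ranges) is exactly that immediate check, with the resulting constants $(A_2,B_1)$ correctly identified.
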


\subsection{Families of Pure Measures}

We now provide several useful examples of families of log-concave measures which are pure. For simplicity, we restrict our attention to origin-symmetric measures. 
Recall that a measure is called unconditional if it is invariant under reflections with respect to all coordinate hyperplanes. A probability measure $\mu$ on $\Real^n$ is called $\Psi_2$ or sub-Gaussian if $Z_p(\mu) \subset C \sqrt{p} Z_2(\mu)$ for some universal constant $C \geq 1$ and all $p \geq 2$. It is called super-Gaussian if $Z_p(\mu) \supset c \sqrt{p} Z_2(\mu)$ for some universal constant $c > 0$ and all $p \in [2,n]$. It is immediate to verify (see e.g. \cite{GuedonEMilmanInterpolating}) that if $\tilde{\mu}$ is an isotropic origin-symmetric log-concave probability measure and $\gamma_n$ is the standard Gaussian measure, then the convolved measure $\mu = \tilde{\mu} \ast \gamma_n$ is log-concave and super-Gaussian. Finally, a convex body $K$ is called $2$-convex (with constant $\alpha > 0$) if $1 - \norm{\frac{x+y}{2}}_K \geq \alpha \eps^2$ for all $\norm{x}_K,\norm{y}_K \leq 1$ with $\norm{x-y}_K \geq \eps > 0$.

\begin{prop}
The following families of log-concave measures are $n$-pure:
\begin{enumerate}
\item Super-Gaussian measures. 
\item Uniform measures on $2$-convex bodies (with fixed $2$-convexity constant $\alpha>0$).
\end{enumerate}
\end{prop}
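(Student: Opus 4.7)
My plan is to verify both defining conditions of $n$-purity separately for each family, reducing the second family to the first.

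For part (1), the first bullet of $n$-purity is essentially a restatement of the super-Gaussian hypothesis at the single value $p=n$: by assumption $Z_n(\mu) \supset c\sqrt{n}\, Z_2(\mu)$, so the first bullet holds with $A = 1/c$. For the second bullet, I would extract $L_\mu \leq C$ by comparing volume radii. Taking $\vrad$ on both sides of the super-Gaussian inclusion gives $\vrad(Z_n(\mu)) \geq c \sqrt{n}\, \vrad(Z_2(\mu))$. On the other hand, Theorem \ref{thm:Zn} provides the matching upper bound $\vrad(Z_n(\mu)) \simeq \frac{\sqrt{n}}{L_\mu} \vrad(Z_2(\mu))$. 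Comparing the two yields $L_\mu \leq C/c$, which is the required bounded isotropic constant (note that since $n$-purity only requires control of $L_\mu$ for $m=n$, no marginal is involved).

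For part (2), my plan is to show that uniform measures on $2$-convex bodies (with modulus of convexity constant $\alpha$) are themselves super-Gaussian with constant $c = c(\alpha)>0$, thereby reducing to part (1). The key mechanism is that the $2$-convexity inequality $\norm{(x+y)/2}_K \leq 1 - \alpha\eps^2$ for $\norm{x-y}_K \geq \eps$ forces the linear functionals $x \mapsto \scalar{x,\theta}$ on $K$ to have lower bounds on all their $L^p$ moments of the Gaussian form. More precisely, after bringing $K$ to isotropic position, the modulus-of-convexity estimate, together with Borell's lemma and a standard slicing/Paouris-type moment computation, yields
\[
h_{Z_p(\lambda_K)}(\theta) = \brac{\int_K \abs{\scalar{x,\theta}}^p \, d\lambda_K(x)}^{1/p} \geq c(\alpha)\sqrt{p} \quad \forall p \in [2,n],\ \theta \in S^{n-1},
\]
which is precisely the super-Gaussian inclusion $Z_p(\lambda_K) \supset c(\alpha)\sqrt{p}\, Z_2(\lambda_K)$. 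Once this is in hand, part (1) finishes the proof.

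The main obstacle is the super-Gaussian estimate for $2$-convex bodies in part (2); the reduction itself is cosmetic but the moment lower bound requires real work. I expect one can cite an existing result in this direction (the relation between modulus of convexity and reverse Khintchine-type estimates for $\lambda_K$ is classical), but if not, the cleanest route is to show that for a fixed direction $\theta$, the one-dimensional marginal of $\lambda_K$ in direction $\theta$ is log-concave and (up to normalization) has variance bounded below by a constant depending only on $\alpha$, and then promote this to all moments using the Kahane--Khintchine inequalities of Lemma \ref{lem:Guedon} together with the $2$-convexity to exclude concentration on small intervals. Everything else is bookkeeping, invoking Theorem \ref{thm:Zn} exactly as in part (1).
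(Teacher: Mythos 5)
Part (1) is correct and is exactly the paper's argument: the inclusion $Z_n(\mu) \supset c\sqrt{n}\,Z_2(\mu)$ is the definition, and comparing volume radii via Theorem \ref{thm:Zn} gives $L_\mu \leq C/c$; since $h=n$, condition (2) of $n$-purity only concerns the full-dimensional marginal, so nothing more is needed.

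Part (2) has a genuine gap. First, you are proving more than necessary: for $n$-purity only the single inclusion $Z_n(\lambda_K) \supset c_\alpha\sqrt{n}\,Z_2(\lambda_K)$ is required, not super-Gaussianity for all $p\in[2,n]$. Second, and more seriously, the mechanism you sketch for the moment lower bound cannot work. A lower bound on the variance of the one-dimensional marginal $\scalar{x,\theta}$, combined with the Kahane--Khintchine inequalities of Lemma \ref{lem:Guedon} or Borell's lemma, only yields $h_{Z_p}(\theta) \geq h_{Z_2}(\theta)$ (Jensen) and the \emph{upper} bound $h_{Z_p}(\theta)\leq Cp\,h_{Z_1}(\theta)$; it cannot produce growth of order $\sqrt{p}$. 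Indeed, a log-concave one-dimensional random variable with unit variance can have uniformly bounded $L^p$ norms for all $p$ (e.g.\ uniform on an interval), so the lower bound $h_{Z_p}(\theta)\geq c(\alpha)\sqrt{p}$ requires the marginal to have genuinely Gaussian-type lower tails down to probability $e^{-p}$ --- this is where the $2$-convexity must actually enter, and your sketch does not supply that step. The paper resolves this by citing the result of Klartag and E.~Milman on $2$-convex bodies, which gives precisely $Z_n(\lambda_K) \supset c_\alpha \sqrt{n}\, Z_2(\lambda_K)$; with that citation in hand, your reduction to part (1) is fine, but as written the key estimate is unproven.
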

\begin{proof}
If $\mu$ is super-Gaussian we have by definition that $Z_n(\mu) \supset c \sqrt{n} Z_2(\mu)$. If  $\mu$ is the uniform measure on a $2$-convex body $K$, it was shown in \cite{KlartagEMilman-2-Convex} that $Z_n(\mu) \supset c_\alpha \sqrt{n} Z_2(\mu)$ for some constant $c_\alpha > 0$ (depending only on $\alpha$, which we assume fixed). In either case, we deduce by Theorem \ref{thm:Zn} that $L_\mu \leq C$, establishing both properties of an $n$-pure measure.
\end{proof}

\begin{prop} \label{prop:pure-1}
The following families of log-concave measures are $1$-pure:
\begin{enumerate}
\item Super-Gaussian measures. 
\item Sub-Gaussian ($\Psi_2$) measures. 
\item Unconditional measures. 
\end{enumerate}
Furthermore, the class of $1$-pure measures is closed under taking marginals. 
\end{prop}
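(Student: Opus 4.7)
The plan is to unpack the $h=1$-pure condition with constants $(A,B)$ into its two clauses: (i) $Z_n(\mu) \supset \tfrac{1}{A}Z_2(\mu)$, and (ii) $L_{\pi_E\mu} \leq B$ for every marginal $\pi_E\mu$ of every dimension $m \in \{1,\dots,n\}$. Clause (i) is essentially free, as $Z_2(\mu) \subset Z_n(\mu)$ by the monotonicity (\ref{eq:Zpq}) (with $A=1$ in the origin-symmetric setting via the Lata\l{}a--Wojtaszczyk refinement mentioned after that display, and with a universal $A$ absorbing the one-dimensional case via Lemma \ref{lem:Guedon}). The real content of each of (1)--(3) is therefore a uniform bound on the isotropic constants of \emph{all} marginals of $\mu$.

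The super-Gaussian and sub-Gaussian cases are parallel. The defining inclusions $Z_p(\mu)\supset c\sqrt{p}\, Z_2(\mu)$ (for super-Gaussian) and $Z_p(\mu)\subset C\sqrt{p}\, Z_2(\mu)$ (for $\Psi_2$) are preserved under any linear map, since $P_E Z_p(\mu) = Z_p(\pi_E\mu)$ and projections preserve inclusions. Thus every marginal $\pi_E\mu$ is again super-Gaussian (respectively $\Psi_2$) on $E$, with the same constants. In the super-Gaussian case, the previous proposition applied on $E$ shows that $\pi_E\mu$ is $(\dim E)$-pure, which via Theorem \ref{thm:Zn} yields $L_{\pi_E\mu}\leq C$. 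In the sub-Gaussian case one instead invokes the classical theorem of Bourgain (with refinements by Klartag) that every isotropic $\Psi_2$ log-concave measure has bounded isotropic constant, applied to each $\pi_E\mu$.

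The unconditional case is the expected main obstacle, because unconditionality is \emph{not} preserved under projection onto a general subspace $E$, so the iterative argument above breaks down. The plan is to invoke the Bobkov--Nazarov structural theorem, which shows that an unconditional isotropic log-concave measure admits a pointwise comparison with a fixed product of symmetric exponentials $\nu=\bigotimes_{i=1}^n \mathrm{Exp}(1)$ up to a universal constant in each coordinate. Such a comparison passes to marginals by integrating out $E^\perp$ in the density, and reduces the problem to a uniform bound on $L_{\pi_E\nu}$ for the fixed product-exponential $\nu$, which follows from a direct computation (or from Klartag's Gaussian approximation for marginals of unconditional log-concave measures, applied to $\nu$). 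Finally, the furthermore statement is a tautology: if $\mu$ is $1$-pure on $\Real^n$ and $E\in G_{n,k}$, then any marginal of $\pi_E\mu$ on $E$ equals $\pi_F\mu$ for the corresponding $F\subset E\subset\Real^n$, and so inherits the bound in clause (ii) directly from the $1$-purity of $\mu$, while clause (i) for $\pi_E\mu$ is handled as in the first paragraph.
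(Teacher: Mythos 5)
Your treatment of the super-Gaussian and sub-Gaussian cases, and of the closure under marginals, coincides with the paper's: the defining inclusions pass to marginals because $P_E Z_p(\mu) = Z_p(\pi_E\mu)$, the super-Gaussian case then gives $\vrad(Z_m(\pi_E\mu)) \geq c\sqrt{m}\,\vrad(Z_2(\pi_E\mu))$ and hence $L_{\pi_E\mu}\leq C$ via Theorem \ref{thm:Zn}, and the $\Psi_2$ case invokes the Bourgain/Klartag--Milman bound on the isotropic constant of sub-Gaussian measures. Your handling of clause (i) via $Z_2(\mu)\subset Z_n(\mu)$ and of the ``furthermore'' statement is also what the paper does.

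The unconditional case is where you diverge, and as written there is a gap. The paper's route is Lata{\l}a's comparison at the level of \emph{centroid bodies}: for isotropic unconditional $\mu$ one has $Z_p(\mu)\supset c\,Z_p(\chi)$ with $\chi$ uniform on the cube; projecting gives $Z_m(\pi_E\mu)\supset c\,Z_m(\pi_E\chi)$, and taking volume radii yields $L_{\pi_E\mu}\leq C L_{\pi_E\chi}$, which is bounded because $\chi$ is $\Psi_2$ (so case (2) applies to it). Your proposed route through a \emph{pointwise density} comparison with the product exponential does not pass to low-dimensional marginals in the way you claim: the Bobkov--Nazarov pointwise bound necessarily carries a multiplicative constant of order $C^n$, and after integrating out $E^\perp$ this contributes a factor $C^{n/m}$ to $L_{\pi_E\mu}=(\max f_{\pi_E\mu})^{1/m}(\det\Cov(\pi_E\mu))^{1/2m}$, which is unbounded when $m\ll n$. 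The repair is exactly to replace the density comparison by the centroid-body comparison (which the Bobkov--Nazarov/Lata{\l}a circle of results does provide): since $L_{\pi_E\mu}\simeq 1/\vrad(Z_m(\pi_E\mu))$ for isotropic marginals, only a lower bound on $\vrad(P_E Z_m(\mu))$ is needed, and an inclusion $Z_p(\mu)\supset c\,Z_p(\nu)$ survives projection with no loss. With that substitution your argument closes, and whether one compares with the cube (as the paper does) or with the product exponential (which is super-Gaussian for $p\leq n$ by Gluskin--Kwapie\'n, so case (1) applies to it) is immaterial.
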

\begin{proof}
We may assume that all measures in question are isotropic. 

\begin{enumerate}
\item
If $\mu$ is super-Gaussian, we have for all $E \in G_{n,m}$:
\[
Z_m(\pi_E \mu) = P_E Z_m(\mu) \supset c \sqrt{m} B_2(E) ,
\]
so that $\frac{1}{L_{\pi_E \mu}} \simeq \abs{Z_m(\pi_E \mu)}^{1/m} \geq c' > 0$, as asserted. 
\item 
Similarly, if $\mu$ is $\Psi_2$ then for all $E \in G_{n,m}$ and $p \geq 2$:
\[
Z_p(\pi_E \mu) = P_E Z_p(\mu) \subset C \sqrt{p} B_2(E) ,
\]
confirming that $\pi_E \mu$ is also $\Psi_2$ (with the same universal bound $C$ on its $\Psi_2$ constant). It is well-known \cite{Bourgain-Psi-2-Bodies,KlartagEMilmanLowerBoundsOnZp} that a $\Psi_2$ measure has bounded isotropic constant, confirming the assertion in this case. 
\item 
If $\mu$ is an isotropic unconditional measure and $\chi$ is the uniform measure on the cube $[-1,1]^n$, it was noted in \cite[p. 2829]{DafnisGiannopoulosTsolomitis-RandomPolytopes} following Lata{\l}a that $Z_p(\mu) \supset c Z_p(\chi)$ for all $p \geq 1$. It follows as before that if $E \in G_{n,m}$ then:
\[
Z_m(\pi_E \mu) = P_E Z_m(\mu) \supset c P_E Z_m(\chi) = c Z_m(\pi_E \chi) .
\]
Taking volumes, we deduce:
\[
 \frac{1}{L_{\pi_E \mu}} \simeq \abs{Z_m(\pi_E \mu)}^{1/m} \geq c \abs{Z_m(\pi_E \chi)}^{1/m} \simeq \frac{1}{L_{\pi_E \chi}} .
\]
But since $\chi$ is a $\Psi_2$ measure, we know that $L_{\pi_E \chi}$ is universally bounded above, establishing (3). 
\end{enumerate}
The closure under marginals is immediate from the definition and the fact that $Z_m(\mu) \supset c Z_2(\mu)$ for all $m \geq 1$ by (\ref{eq:Zpq}).  
\end{proof}

\begin{rem}
It was shown by Paouris in \cite{Paouris-MarginalsOfProducts} that product measures (having arbitrarily many factors) of sub-Gaussian or super-Gaussian log-concave measures are $1$-pure. 
\end{rem}

\begin{rem}
A well-known argument due to V.~Milman involving the M-position \cite{AGA-Book-I,Pisier-Book}, in combination with K.~Ball's observation that the isotropic position is an M-position if the isotropic constant is bounded \cite{Ball-PhD}, shows that if $\mu$ is an origin-symmetric isotropic log-concave probability measure with $L_\mu \leq C$, then with high-probability, a random marginal $\pi_F \mu$ with $F \in G_{n,n/2}$ is $n/2$-pure with universal constants $(A,B)$ depending solely on $C$. 
Moreover, with high-probability, a random marginal $\pi_F \mu$ with $F \in G_{n,n/4}$ is super-Gaussian, and therefore $h$-pure for all $h=1,\ldots,n/4$ with universal constants $(A,B)$ depending solely on $C$; we briefly sketch the proof. Let $ {\bar{sg}}(\nu)$ denote the supergaussian constant of a probability measure $\nu$ in $\Real^m$, i.e.  the minimum $t>0$ such that $ \frac{1}{ t} \sqrt{p} Z_{2}(\nu) \subseteq Z_{p}(\nu)$ for all $2\leq p \leq m$. It is easy to check that if $\mu_{s}$ denotes the conditioning of $\mu$ onto $s \sqrt{n} B_2^n$ for a suitably chosen constant $s \simeq 1$, then ${\bar{sg}}(\pi_{F}(\mu))\leq C {\bar{sg}}(\pi_{F}(\mu_{s}))$ for all subspaces $F$,  since $ Z_{p} (\mu_{s}) \subset e \; Z_{p} (\mu)$ for all $2\leq p \leq n$. Moreover, one may check (e.g. by using \cite[Proposition 5.5]{PaourisSmallBall}) that  $M^*(Z_{p}(\mu_{s}))\leq  C' \sqrt{p}$
and (since $L_{\mu_{s}} \simeq L_{\mu} \leq C$) $\vrad(Z_{p}(\mu_{s})) \geq c \sqrt{p}$, for all $2\leq p \leq n$.
Applying \cite[Proposition 3.1]{Klartag-Vershynin} and the Bourgain--Milman reverse Santal\'o inequality \cite[Theorem 8.2.2]{AGA-Book-I}, it follows that  for $k\leq n/4$, with probability at least $1- e^{ -c n}$ over $F\in G_{n,k}$ (with respect to the corresponding Haar probability measure), one has that the inradius of $P_{F}Z_{p}(\mu_s)$ is at least $c^{\prime} \sqrt{p}$. Using this fact for all $p=2^{m}$ with $m=1, \cdots ,[\log_{2}{n}]$, and invoking the identity $P_{F} Z_{p}(\mu_{s}) = Z_{p} ( \pi_{F} (\mu_{s}))$, an immediate application of the union bound yields that with high-probability on $F\in G_{n,k}$:
\[
c\sqrt{p} Z_{2} (\pi_{F} (\mu_{s})) \subset Z_{p} (\pi_{F} (\mu_{s})) \;\;\; \forall p=2^{m} ~,~ m=1, \cdots , [\log_{2} {n}]. 
\]
This shows that $ {\bar{sg}}(\pi_{F}(\mu)) \leq C_1 {\bar{sg}}(\pi_{F}(\mu_{s}))\leq C_2$ with  probability at least $1- e^{-c n}$ over $F \in G_{n,k}$, completing the proof. 

\end{rem}

\subsection{Properties of Pure Measures}

Given a convex body $K \subset \Real^n$ and $m=1,\ldots,n$, we use the following notation:
\begin{align*}
v_m^-(K) & := \inf \set{ \vrad(P_E (K)) ; E \in G_{n,m} }, \\
e_m(B_2^n,K) & := \inf \set{t > 0 \; ;\; N(B_2^n, tK) \leq 2^m} . 
\end{align*}
We will need the following crucial estimate on the regularity of dual covering numbers of pure isotropic log-concave measures, essentially established by Giannopoulos and Milman in \cite{GiannopoulosEMilman-IsotropicM}:

\begin{thm} \label{thm:pure-regular}
Let $\mu$ denote a $h$-pure isotropic log-concave probability measure (with constants $(A,B)$). Then for all $k=1,\ldots,n$:
\begin{equation} \label{eq:pure-prop1}
 v_k^-(Z_n(\mu)) \geq \max \brac{\frac{1}{A} \sqrt{h} , \frac{c}{B} \sqrt{k}} .
\end{equation} 
Furthermore, for all $k=1,\ldots,n$ we have:
\begin{equation} \label{eq:pure-prop2}
e_k(B_2^n,Z_n(\mu)) \leq \min \brac{ \frac{A}{\sqrt{h}} , C_{A,B} \frac{1}{\sqrt{k}} \frac{n}{k} \log( e + \frac{n}{k}) } ,
\end{equation}
or equivalently, we have for all $t > 0$:
\begin{equation}  \label{eq:pure-prop3}
N(\sqrt{n} B_2^n , t Z_n(\mu)) \leq \begin{cases}  \exp \brac{C'_{A,B} \; n \brac{\frac{\log(e+t)}{t}}^{\frac{2}{3}}}  &  t \leq A \sqrt{n/h} \\ 
1 & \text{otherwise} \end{cases} .
\end{equation}
\end{thm}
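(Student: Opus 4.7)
The first estimate, $\sqrt{h}/A$, will be a direct consequence of the first defining property of $h$-purity combined with the isotropicity of $\mu$: since $Z_2(\mu) = B_2^n$, the hypothesis $Z_n(\mu) \supset (1/A)\sqrt{h} Z_2(\mu)$ gives $Z_n(\mu) \supset (\sqrt{h}/A) B_2^n$, so for any $E \in G_{n,k}$, $P_E Z_n(\mu) \supset (\sqrt{h}/A) B_2(E)$ and $\vrad(P_E Z_n(\mu)) \geq \sqrt{h}/A$. For the second estimate $c\sqrt{k}/B$, I would exploit the nesting $Z_k(\mu) \subset Z_n(\mu)$ from \eqref{eq:Zpq}, which yields $P_E Z_n(\mu) \supset P_E Z_k(\mu) = Z_k(\pi_E \mu)$. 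Since $\pi_E \mu$ is an isotropic log-concave probability measure on $E \cong \Real^k$ (by Pr\'ekopa--Leindler), Theorem~\ref{thm:Zn} applied in dimension $k$ gives $\vrad(Z_k(\pi_E \mu)) \simeq \sqrt{k}/L_{\pi_E \mu} \cdot \vrad(Z_2(\pi_E \mu))$; the isotropicity forces $Z_2(\pi_E \mu) = B_2(E)$, so $\vrad(P_E Z_n(\mu)) \geq c\sqrt{k}/L_{\pi_E \mu}$. When $k \geq h$, the second $h$-purity condition gives $L_{\pi_E \mu} \leq B$, producing the $c\sqrt{k}/B$ lower bound. For $k < h$ the second term is dominated (up to universal constants) by the first, so the max is governed by $\sqrt{h}/A$.

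\textbf{Plan for Part 2 (covering number upper bound).} The first bound $A/\sqrt{h}$ is trivial: $B_2^n \subset (A/\sqrt{h}) Z_n(\mu)$ means one translate already covers, so $e_k(B_2^n, Z_n(\mu)) \leq A/\sqrt{h}$ for every $k$. The core work is the second bound, for which I will adapt the argument of Giannopoulos--Milman \cite{GiannopoulosEMilman-IsotropicM}, originally stated for $1$-pure isotropic log-concave measures. Their strategy converts lower bounds on $v_k^-(Z_n(\mu))$ (supplied by Part 1) into upper bounds on the dual covering numbers $N(B_2^n, t Z_n(\mu))$ via an entropy-duality / regularity step, losing only a $\log(e + n/k)$ factor. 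The purity parameter $h$ enters exclusively through Part 1, since the second volumetric lower bound $v_k^-(Z_n(\mu)) \gtrsim \sqrt{k}/B$ only becomes available for $k \geq h$; this is precisely what forces the bound to degenerate once $t$ exceeds the scale $A/\sqrt{h}$.

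\textbf{Equivalence of the two forms and main obstacle.} The passage from $e_k(B_2^n, Z_n(\mu)) \lesssim (n/k^{3/2}) \log(e + n/k)$ to the $t$-parametrized statement is purely algebraic: rescaling gives $e_k(\sqrt{n} B_2^n, Z_n(\mu)) \lesssim (n/k)^{3/2} \log(e + n/k)$, and inverting the implicit relation $t \simeq (n/k)^{3/2} \log(e + n/k)$ yields $k \simeq n(\log(e+t)/t)^{2/3}$ in the regime where $n/k$ and $t$ are comparable in logarithmic scale, giving $\log N(\sqrt{n} B_2^n, t Z_n(\mu)) \lesssim n(\log(e+t)/t)^{2/3}$; the cutoff $t \leq A\sqrt{n/h}$ is exactly the threshold beyond which the trivial bound $A/\sqrt{h}$ already delivers a single cover. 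The main obstacle is the entropy-duality step in Part 2, where the logarithmic regularity factor enters and where the purity parameter $h$ must be threaded through Giannopoulos--Milman's argument; everything else is a matter of assembling Theorem~\ref{thm:Zn}, the nesting \eqref{eq:Zpq}, and the two defining properties of $h$-purity.
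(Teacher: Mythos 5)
Your proposal follows essentially the same route as the paper's proof sketch: the lower bound (\ref{eq:pure-prop1}) is obtained exactly as you describe (the paper quotes Lemma 12 of Giannopoulos--Milman for the $c\sqrt{k}/B$ term, whose proof is precisely your reduction $P_E Z_n(\mu) \supset P_E Z_k(\mu) = Z_k(\pi_E\mu)$ followed by Theorem \ref{thm:Zn} in dimension $k$), and the conversion of these volumetric lower bounds into (\ref{eq:pure-prop2}) is carried out by invoking the same Giannopoulos--Milman regularity machinery (their Corollary 9 and Remark 6, which bound $e_k(B_2^n,K)$ by $\sup_m (m/k)^{\alpha}\frac{n}{m}\log(e+\frac{n}{m})/v_m^-(K)$), with $h$-purity entering only through the $v_m^-$ estimates, just as you say. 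The one detail you gloss over is that passing from (\ref{eq:pure-prop2}) to (\ref{eq:pure-prop3}) for small $t$, where your inversion would require $k>n$, is handled in the paper by the volumetric extension of Lemma \ref{lem:extend}.
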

\begin{proof}[Proof Sketch]
Since $\mu$ is assumed isotropic we have $Z_2(\mu) = B_2^n$. 
The case $h=1$ appears explicitly in \cite[Lemma 12 and Theorem 16]{GiannopoulosEMilman-IsotropicM}. For the general case, an inspection of the proof of \cite[Theorem 16]{GiannopoulosEMilman-IsotropicM} reveals that the only ingredient required to obtain an estimate on $e_k(B_2^n,Z_n(\mu))$ is a lower bound on $v_m^-(Z_n(\mu))$ for $m=1,\ldots, k$.

When $m \leq h$, we may simply use $Z_n(\mu) \supset \frac{1}{A} \sqrt{h} B_2^n$ and conclude that:
\[
 v_m^-(Z_n(\mu)) \geq \frac{1}{A} \sqrt{h} .
\]
When $m = h,\ldots,n$, \cite[Lemma 12]{GiannopoulosEMilman-IsotropicM} ensures that:
\[
 v_m^-(Z_n(\mu)) \geq \frac{c}{\sup \set{ L_{\pi_E \mu} ; E \in G_{n,m} }} \sqrt{m} ,
 \]
for some universal constant $c > 0$, and so we see that one only needs to control the isotropic constants of marginals of $\mu$ of dimension not smaller than $h$. 
Combining these two estimates, (\ref{eq:pure-prop1}) follows for a $h$-pure isotropic measure with constants $(A,B)$. 

Now, according to \cite[Corollary 9 and Remark 6]{GiannopoulosEMilman-IsotropicM}, one has for any $\alpha > 0$:
\[
e_k(B_2^n,K) \leq C_\alpha \sup_{m=1,\ldots,k}  \brac{\frac{m}{k}}^{\alpha} \frac{n}{m} \log\Big(e + \frac{n}{m}\Big) \frac{1}{v_{m}^-(K)}.
\]
Applying this to $K = Z_n(\mu)$ (with, say, $\alpha = 2$), and plugging the estimate (\ref{eq:pure-prop1}) on $v_{m}^-(K)$, we obtain: 
\[
e_k(B_2^n, Z_n(\mu)) \leq C_{A,B} \frac{1}{\sqrt{\max(h,k)}} \frac{n}{k} \log( e + \frac{n}{k}) 
\]
Combining this with the trivial estimate:
\[
e_k(B_2^n,Z_n(\mu)) \leq \frac{A}{\sqrt{h}} 
\]
(since $Z_n(\mu) \supset \frac{1}{A} \sqrt{h} B_2^n$), the asserted (\ref{eq:pure-prop2}) follows (note that we replaced $\max(h,k)$ by the looser $k$ since we do not care here about the dependence of $C_{A,B}$ on $(A,B)$).  The equivalent (\ref{eq:pure-prop3}) is obtained in the range $t \geq 1$ by direct inspection of (\ref{eq:pure-prop2}), and extended to all $t > 0$ by Lemma \ref{lem:extend} after adjusting the constant $C_{A,B}'$. 
\end{proof}

\section{Part 2 - Weak Generalized Dual Sudakov: Pure Measures and Regular Small-Diameter Bodies} \label{sec:part2}

It is naturally of interest to establish the Weak Generalized Dual Sudakov estimate for general (say origin-symmetric) log-concave measures $\mu$ and convex bodies $K$. Unfortunately, we have not been able to accomplish this in that generality. In this section, we establish a Weak Generalized Dual Sudakov estimate when the log-concave measure $\mu$ is assumed to be $h$-pure, or when $K$ is assumed to have $\alpha$-regular small-diameter (defined below). 

\subsection{Part 2 for Pure Measures}

\begin{thm}  \label{thm:part2-pure}
Let $\mu$ be a $h$-pure log-concave measure on $\Real^n$ (with constants $(A,B)$), and let $p \in [1,n]$. Then for any $q > 0$ and star-body $L \subset \Real^n$:
\[
M\brac{Z_p(\mu) , t \sqrt{\frac{p}{n}} m_q(\mu,L) L} \leq \exp \brac{1 + q + C_{A,B} n \brac{\frac{\log(e+t)}{t}}^{\frac{1}{3}} } \;\;\; \forall t > 0. \]
In particular, if $\mu(L) \geq \exp(-p)$ with $p \in [1,n]$ then:
\[
M(Z_p(\mu) , L) \leq \exp( C'_{A,B} n^{5/6} p^{1/6} \log^{1/3}(1+n/p)) .
\]
\end{thm}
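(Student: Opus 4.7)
The plan is to reduce to the isotropic case, then bound the packing number by a three-layer chain in the main regime and fall back on Theorem \ref{thm:part3} in the complementary one.

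Since both sides of the inequality are invariant under simultaneous linear transformations of $\mu$ and $L$ (the packing number, $m_q(\mu, L)$, and the $h$-purity parameters are all preserved), we may assume $\mu$ is isotropic; then $Z_2(\mu) = B_2^n$, $I_1(\mu) \simeq \sqrt{n}$ by Lemma \ref{lem:Guedon}, and $Z_n(\mu) \supset (\sqrt{h}/A) B_2^n$ by the first defining property of $h$-purity. Set $r := t\sqrt{p/n}\, m_q(\mu, L)$ and $g := (\log(e+t)/t)^{1/3}$.

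In the main regime $p \leq C_{A,B}\, n g$, apply the three-layer chain
\[
M(Z_p(\mu), rL) \leq N(Z_p(\mu), \rho B_2^n) \cdot N(B_2^n, \sigma Z_n(\mu)) \cdot M(Z_n(\mu), (r/(\rho\sigma))L),
\]
bounding the three factors respectively by Theorem \ref{thm:RegularSudakovEllipsoids} with $\Eps = B_2^n$ (yielding $\exp(C(pn/\rho^2 + p\sqrt{n}/\rho))$), Theorem \ref{thm:pure-regular} (yielding $\exp(C_{A,B}\, n\, f(\sigma\sqrt{n})^{2/3})$ with $f(x) := \log(e+x)/x$), and Theorem \ref{thm:part3} (yielding $\exp(1 + q + Cn^{3/2}\rho\sigma/(t\sqrt{p}))$). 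The key step is to pick $\sigma\sqrt{n}$ with $f(\sigma\sqrt{n}) = g^{3/2}$, which forces the middle factor to contribute $\simeq C_{A,B}\, ng$; the constraints on $\rho$ then reduce to an interval of the form $\max(\sqrt{p/g},\, p/(g\sqrt{n})) \lesssim \rho \lesssim g^{5/2} t \sqrt{p}/\log(1/g)$ that is non-empty precisely when $p \lesssim ng$, and any admissible $\rho$ makes each remaining contribution $\lesssim C_{A,B}\, ng$, yielding the total bound $\exp(1 + q + C_{A,B}\, ng)$.

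In the complementary regime $p > C_{A,B}\, ng$, the constraint forces $t > c_{A,B} (n/p)^3 \log(e+t)$, and one checks directly that $n^{3/2}/(t\sqrt{p}) \leq C_{A,B}\, ng$; the estimate $M(Z_p(\mu), rL) \leq M(Z_n(\mu), rL) \leq \exp(1 + q + Cn^{3/2}/(t\sqrt{p}))$ from Theorem \ref{thm:part3} (applied via $Z_p(\mu) \subset Z_n(\mu)$) then already implies the claim. The ``in particular'' corollary follows by taking $q := p$ and $t := \sqrt{n/p}/m_p(\mu, L) \geq \sqrt{n/p}$ (the inequality using $\mu(L) \geq e^{-p}$, which forces $m_p(\mu, L) \leq 1$), so that $t\sqrt{p/n}\, m_p = 1$; then the elementary bound $\log(e+t)/t \leq C\sqrt{p/n}\log(1+n/p)$ yields $ng \leq C_{A,B}\, n^{5/6} p^{1/6}\log^{1/3}(1+n/p)$, and $1 + p$ is absorbed into this leading term since $p \leq n$. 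The main obstacle is the four-term balance in the main regime: verifying that the interval for $\rho$ is non-empty exactly at the threshold $p \lesssim ng$, and tracing how the target $1/3$ exponent emerges from inverting the $2/3$-power rate of Theorem \ref{thm:pure-regular} at scale $g^{3/2}$ while simultaneously accommodating the polynomial rates from Theorem \ref{thm:RegularSudakovEllipsoids} and the linear rate from Theorem \ref{thm:part3}.
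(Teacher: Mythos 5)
Your proof is correct and follows essentially the same route as the paper: the chain $Z_p(\mu) \to B_2^n \to Z_n(\mu) \to L$, bounding the three links by Theorem \ref{thm:RegularSudakovEllipsoids}, Theorem \ref{thm:pure-regular}, and Theorem \ref{thm:part3} respectively. The only difference is organizational — the paper factors the argument through the intermediate covering estimate $N(Z_p(\mu), s\sqrt{p/n}\,Z_n(\mu))$ (Theorem \ref{thm:Zpn}) and optimizes the two scales in separate stages, whereas you optimize them jointly; the resulting bound is the same.
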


This will follow from Part 3 of The Program together with the following:

\begin{thm} \label{thm:Zpn}
Let $\mu$ be a $h$-pure log-concave measure (with constants $(A,B)$) and let $p \in [1,n]$. Then:
\[
N\brac{Z_p(\mu) , t \sqrt{\frac{p}{n}} Z_n(\mu)} \leq \exp \brac{ C_{A,B} n \brac{\frac{\log(e+t)}{t}}^{\frac{1}{2}}} \;\;\; \forall t > 0 .
\]
\end{thm}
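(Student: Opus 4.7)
The plan is to factor the covering through an auxiliary Euclidean ball and optimize. After reducing to the case where $\mu$ is isotropic (so $Z_2(\mu)=B_2^n$; the estimate and the $h$-pureness are invariant under simultaneous invertible linear transformations of $\mu$), we apply the sub-multiplicativity of covering numbers: for each $r>0$,
\[
N\!\left(Z_p(\mu),\, t\sqrt{p/n}\, Z_n(\mu)\right) \leq N(Z_p(\mu),\, r B_2^n)\cdot N\!\left(r B_2^n,\, t\sqrt{p/n}\, Z_n(\mu)\right).
\]

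For the first factor, the key input is a bound on the mean width $M^*(Z_p(\mu))$. By Jensen and Fubini,
\[
M^*(Z_p(\mu))^p \leq \int_{S^{n-1}}\!\int_{\Real^n}\! |\scalar{\theta,y}|^p\, d\mu(y)\, d\sigma(\theta) \simeq (p/n)^{p/2}\, I_p(\mu, B_2^n)^p,
\]
using the standard spherical moment formula $\int_{S^{n-1}}|\scalar{\theta,y}|^p d\sigma(\theta)\simeq (p/n)^{p/2}|y|^p$. Paouris's radial bound yields $I_p(\mu, B_2^n)\leq C\sqrt{n}$ for isotropic log-concave $\mu$ with $p\leq n$, so $M^*(Z_p(\mu))\leq C\sqrt{p}$ and hence $\ell^*(Z_p(\mu)) \leq C\sqrt{pn}$. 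The classical Sudakov minoration recalled in the Introduction then gives
\[
\log N(Z_p(\mu),\, r B_2^n) \leq C p n/r^2.
\]

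For the second factor, by scale invariance of covering numbers,
\[
N\!\left(r B_2^n,\, t\sqrt{p/n}\, Z_n(\mu)\right) = N\!\left(\sqrt{n}\, B_2^n,\, (t\sqrt{p}/r)\, Z_n(\mu)\right),
\]
which we estimate via Theorem \ref{thm:pure-regular}, formula (\ref{eq:pure-prop3}), exploiting the $h$-pureness of $\mu$:
\[
\log N\!\left(r B_2^n,\, t\sqrt{p/n}\, Z_n(\mu)\right) \leq C_{A,B}\, n\!\left(\frac{\log(e + t\sqrt{p}/r)}{t\sqrt{p}/r}\right)^{\!2/3},
\]
provided $t\sqrt{p}/r \leq A\sqrt{n/h}$; otherwise the covering is trivially $1$.

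The final step is to optimize $r$. Balancing the two exponents $p n/r^2$ and $n (r/(t\sqrt{p}))^{2/3}\log^{2/3}(e+t\sqrt{p}/r)$ leads to the choice $r \simeq \sqrt{p}\, t^{1/4}/\log^{1/4}(e+t)$, under which both exponents become $\simeq n (\log(e+t)/t)^{1/2}$, yielding the asserted estimate. Edge cases are handled separately: for small $t$ the target bound exceeds $n$ and follows from the crude volumetric comparison $|Z_p(\mu)|/|t\sqrt{p/n}Z_n(\mu)|$ (using $\vrad(Z_p(\mu))\leq C\sqrt{p}$ by Paouris together with $\vrad(Z_n(\mu))\geq c\sqrt{n}/L_\mu\geq c\sqrt{n}/B$ by Theorem \ref{thm:Zn}); for large $t$ the packing is trivially $1$ once $t\sqrt{p/n}Z_n(\mu)\supset Z_p(\mu)$. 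The main technical obstacle is verifying that the optimal $r$ lies in the range where both covering estimates apply, and tracking log factors so the exponent truly matches $(\log(e+t)/t)^{1/2}$ rather than a weaker power; the alignment of the $1/r^2$ decay of the Sudakov bound with the $(1/u)^{2/3}$ decay of Theorem \ref{thm:pure-regular} is precisely what produces the square-root rather than a $2/3$-power in the final exponent.
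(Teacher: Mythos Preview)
Your proof is correct and follows essentially the same architecture as the paper: reduce to the isotropic case, factor the covering through a Euclidean ball via sub-multiplicativity, bound the second factor by Theorem~\ref{thm:pure-regular}, and optimize the intermediate radius (your choice $r \simeq \sqrt{p}\, t^{1/4}/\log^{1/4}(e+t)$ is exactly the paper's choice $s = t^{3/4}\log^{1/4}(e+t)$ under the change of variables $s = t\sqrt{p}/r$).

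The one noteworthy difference is in the first factor. The paper invokes its own Theorem~\ref{thm:RegularSudakovEllipsoids} (the Generalized Regular Dual Sudakov for ellipsoids), which for isotropic $\mu$ reads $\log N(Z_p(\mu), r\sqrt{p}\,B_2^n) \leq C(n/r^2 + \sqrt{np}/r)$, and then observes that the second term is dominated in the relevant range $t \leq \sqrt{n/p}$. You instead bypass that machinery entirely: using $M^*(Z_p(\mu)) \leq C\sqrt{p}$ (via Jensen, Fubini, and Paouris's $I_p(\mu) \leq C\sqrt{n}$) together with the \emph{classical} Sudakov minoration gives directly $\log N(Z_p(\mu), r B_2^n) \leq C pn/r^2$, which is precisely the dominant term. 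This is a legitimate and somewhat more self-contained route to the same estimate, at the cost of importing Paouris's moment bound as a black box rather than the paper's own ellipsoid result.
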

\begin{proof}
Since the statement is invariant under linear transformations, we may assume that $\mu$ is isotropic. 
By the triangle inequality for covering numbers, we have for all $s  > 0$:
\[
N\brac{Z_p(\mu) , t \sqrt{\frac{p}{n}} Z_n(\mu)} \leq N\brac{Z_p(\mu) , \frac{t}{s} \sqrt{p} B_2^n} N\brac{\sqrt{n} B_2^n , s Z_n(\mu)} .   
\]

Since $I_1(\mu,B_2^n) \simeq \sqrt{n}$ for isotropic $\mu$, the following estimate is a particular case of the Generalized Dual Sudakov estimate for ellipsoids we obtained in Theorem \ref{thm:RegularSudakovEllipsoids} for general (not necessarily isotropic) origin-symmetric log-concave measures:
\[
N(Z_p(\mu) , r \sqrt{p} B_2^n) \leq \exp\brac{ C_2 \frac{n}{r^2}  + C_3 \frac{\sqrt{n} \sqrt{p}}{r} } \;\;\; \forall r > 0.
\]
For isotropic log-concave measures, this estimate was first established in \cite[Proposition 5.1]{GPV-ImprovedPsi2} -- see Subsection \ref{subsec:conclude-ell} for more details.

Next, by Theorem \ref{thm:pure-regular} on the regularity of dual covering numbers for pure measures, we have:
\[
N(\sqrt{n} B_2^n , s Z_n(\mu))  \leq \exp \brac{ C_{A,B} n \brac{\frac{\log(e+s)}{s}}^{\frac{2}{3}} } \;\;\; \forall s >0 . 
\]
Setting $r = t/s$ above and combining both estimates, we obtain for all $s,t > 0$:
\[
N(Z_p(\mu) , t \sqrt{p/n} Z_n(\mu)) \leq \exp \brac{ C_2 \frac{n}{t^2} s^2 + C_3 \frac{\sqrt{n}{\sqrt{p}}}{t} s + C_{A,B} n  \brac{\frac{\log(e+s)}{s}}^{\frac{2}{3}} } .
\]
Optimizing on $s$, we set $s = t^{3/4} \log^{1/4}(e+t)$, yielding:
\[
N(Z_p(\mu) , t \sqrt{p/n} Z_n(\mu)) \leq \exp \brac{ C'_{A,B} n \brac{\frac{\log(e+t)}{t}}^{1/2} + C_3 \sqrt{n} \sqrt{p}  \brac{\frac{\log(e+t)}{t}}^{1/4}} \;\;\; \forall t > 0 . \] 
However, note that since $Z_p(\mu) \subset Z_n(\mu)$, the left-hand-side is exactly $1$ for all $t \geq \sqrt{n/p}$, and that in the non-trivial range $t \in (0 , \sqrt{n/p}]$, the first term on the right-hand-side always dominates the second one. Adjusting constants, the assertion is thus established. 
\end{proof}

\begin{proof}[Proof of Theorem \ref{thm:part2-pure}]
By the triangle inequality for packing numbers, we have for every $s > 0$:
\[
M\brac{Z_p(\mu) , t \sqrt{\frac{p}{n}} m_q(\mu,L) L} \leq N\brac{Z_p(\mu) , s \sqrt{\frac{p}{n}} Z_n(\mu)} M\brac{Z_n(\mu) , \frac{t}{s} m_q(\mu,L) L} . 
\]
Invoking Theorems \ref{thm:Zpn} and \ref{thm:part3} to estimate the terms on the right-hand-side, we obtain:
\[
M\brac{Z_p(\mu) , t \sqrt{\frac{p}{n}} m_q(\mu,L) L} \leq \exp \brac{ C_{A,B} n \brac{\frac{\log(e+s)}{s}}^{1/2} + 1 + q + C' n \frac{s}{t}} .  
\]
Optimizing on $s > 0$, we set $s := t^{2/3} \log^{1/3} (e+t)$. Adjusting constants, the assertion is established. 
\end{proof}

\subsection{Part 2 for Regular Small-Diameter Bodies}

The only property we will require for the ensuing proof is encapsulated in the following:

\begin{dfn*}[Regular Small-Diameter]
An origin-symmetric convex body $K \subset \Real^n$ is called $\alpha$-regular ($\alpha \in (0,2]$) small-diameter (with constant $R\geq 1$) if there exists $T \in GL_n$ so that, denoting $K_0 = T(K)$:
\begin{enumerate}
\item $K_0 \subset R  B_2^n$ (``small-diameter").
\item $N(B_2^n,t K_0) \leq \exp( n / t^\alpha)$ for all $t > 0$ (``$\alpha$-regular"). 
\end{enumerate}
\end{dfn*}
Note that if $K$ is $\alpha$-regular small-diameter (with constant $R$), then it is also $\beta$-regular small-diameter (with constant depending on $R$ and $\beta$) for all $\beta \in (0,\alpha]$ (as follows for instance from Lemma \ref{lem:extend}). Also note that simple examples such as the unit-cube show that one cannot expect a general origin-symmetric convex  $K \subset \Real^n$ to be $\alpha$-regular small-diameter with $R \leq C$ and $\alpha > 2$ (independently of $n$).

\subsubsection{Examples of Regular Small-Diameter Bodies}

\begin{prop} \label{prop:psi2}
Assume that $K \subset \Real^n$ is an origin-symmetric convex body so that $\lambda_K$, the uniform measure on $K$, is $h$-pure (with constants $(A,B)$). Assume in addition that $K \subset D \sqrt{n} Z_2(\lambda_K)$. Then $K$ is $\alpha$-regular small-diameter with constant $R_{A,B,\alpha} D$ for all $\alpha \in (0,2/3)$.

In particular, the following families are $\frac{1}{2}$-regular small-diameter:
\begin{enumerate}
\item Sub-Gaussian ($\Psi_2$) Convex-Bodies are $\frac{1}{2}$-regular small-diameter with constant $R \leq C$.
\item Unconditional Convex Bodies $K$ satisfying $K \subset D \sqrt{n} Z_2(\lambda_K)$ are $\frac{1}{2}$-regular small-diameter with constant $R \leq C D$. 
\end{enumerate}
\end{prop}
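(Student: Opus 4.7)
The plan is to choose the linear transformation $T$ in the definition of $\alpha$-regular small-diameter so that $\lambda_{K_0}$ becomes a rescaled $h$-pure isotropic measure on which the dual-covering regularity estimate (\ref{eq:pure-prop3}) from Theorem \ref{thm:pure-regular} applies. Concretely, I would take $T$ to be (a constant multiple of) the isotropizing map for $\lambda_K$ further scaled by $\tfrac{1}{\sqrt{n}}$, so that $Z_2(\lambda_{K_0}) = \tfrac{1}{\sqrt{n}} B_2^n$; the hypothesis $K \subset D \sqrt{n}\, Z_2(\lambda_K)$ then translates directly into $K_0 \subset D B_2^n$, giving the small-diameter property. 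Both conditions defining $h$-purity are preserved under invertible linear transformations: condition (1) because $Z_n$ and $Z_2$ are linearly equivariant, and condition (2) because every $m$-dimensional marginal of $T_{\ast}\lambda_K$ is affinely equivalent to a suitable $m$-dimensional marginal of $\lambda_K$, and the isotropic constant is affine-invariant. Hence $\lambda_{K_0}$ is still $h$-pure with the same constants $(A,B)$.

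For the regularity estimate, I would use the identification $K_0 \simeq Z_n(\lambda_{K_0})$, which follows from (\ref{eq:ZnLambda}) together with the origin-symmetry of $K_0$. Combined with $B_2^n = \sqrt{n}\, Z_2(\lambda_{K_0})$, this yields
\[
N(B_2^n, t K_0) \leq N\!\brac{\sqrt{n}\, Z_2(\lambda_{K_0}) , \, c t \, Z_n(\lambda_{K_0})}
\]
for some universal $c>0$. Applying the homothety $\sqrt{n}\cdot \mathrm{Id}$ to both entries (which is linearly equivalent to pushing $\lambda_{K_0}$ forward to an isotropic, still $h$-pure, log-concave measure $\nu$) and invoking the linear invariance of covering numbers, the right-hand side equals $N(\sqrt{n} B_2^n, c t\, Z_n(\nu))$. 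Theorem \ref{thm:pure-regular} estimate (\ref{eq:pure-prop3}) then gives
\[
N(B_2^n, t K_0) \leq \exp\!\brac{C'_{A,B}\, n \brac{\tfrac{\log(e+ct)}{ct}}^{2/3}}.
\]

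For any fixed $\alpha \in (0, 2/3)$, the exponent $2/3$ strictly dominates the logarithmic factor, so $(\log(e+s)/s)^{2/3} \leq C_\alpha / s^\alpha$ holds for $s \geq 1$; Lemma \ref{lem:extend} extends this to $s \in (0,1)$ (the logarithmic overhead $\log(1+2/s)$ is absorbed into $1/s^\alpha$ for any $\alpha>0$), producing a global estimate of the form $N(B_2^n, t K_0) \leq \exp(C_{A,B,\alpha}\, n / t^\alpha)$. Replacing the original $T$ by $C_{A,B,\alpha}^{1/\alpha}\, T$, equivalently rescaling $K_0$ by $R_{A,B,\alpha} := C_{A,B,\alpha}^{1/\alpha}$, absorbs the constant inside the argument of $N$ so that $N(B_2^n, t K_0) \leq \exp(n/t^\alpha)$ for all $t>0$, while enlarging the small-diameter constant from $D$ to $R_{A,B,\alpha}\, D$. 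This establishes the general claim.

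For the two specific families, I would verify the hypotheses of the general statement. If $K$ is sub-Gaussian, Proposition \ref{prop:pure-1}(2) yields that $\lambda_K$ is $1$-pure with universal constants, while the $\Psi_2$ property combined with (\ref{eq:ZnLambda}) and origin-symmetry of $K$ gives $K \simeq Z_n(\lambda_K) \subset C \sqrt{n}\, Z_2(\lambda_K)$, so $D \leq C$; if $K$ is unconditional, Proposition \ref{prop:pure-1}(3) provides $1$-purity with universal constants, and the bound on $D$ is given by hypothesis. In both cases the general statement applied with $\alpha = 1/2 < 2/3$ delivers the claimed $\tfrac{1}{2}$-regular small-diameter conclusion. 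The main technical subtlety I foresee is navigating the competing normalizations: forcing $\lambda_K$ into isotropic position places $K$ at diameter of order $\sqrt{n}$, whereas the small-diameter definition demands $K_0$ itself be of bounded diameter, so the homothety by $1/\sqrt{n}$ and its interplay with $Z_n(\lambda_{K_0}) \simeq K_0$ and with Theorem \ref{thm:pure-regular} (stated strictly for isotropic measures) must be tracked carefully.
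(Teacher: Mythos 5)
Your proposal is correct and follows essentially the same route as the paper's proof: pass to the isotropic position rescaled by $1/\sqrt{n}$, identify $K_0 \simeq Z_n(\lambda_{K_0})$ via (\ref{eq:ZnLambda}) and origin-symmetry, invoke the $2/3$-regular covering estimate (\ref{eq:pure-prop3}) of Theorem \ref{thm:pure-regular}, and absorb constants by rescaling. Your treatment is in fact slightly more careful than the paper's on the bookkeeping (invariance of $h$-purity under linear maps, and the passage from the $(\log(e+t)/t)^{2/3}$ bound to $C_\alpha/t^\alpha$ for $t\geq 1$ with the small-$t$ range handled by Lemma \ref{lem:extend}).
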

\begin{proof}
We may assume that $\lambda_K$ is isotropic, so that $Z_2(\lambda_K) = B_2^n$. Since $Z_n(\lambda_K) \simeq K$ by origin-symmetry, if we define $K_1 = K / \sqrt{n}$, Theorem \ref{thm:pure-regular} ensures that:
\[
N(B_2^n, t K_1) \leq \exp(C_{A,B,\alpha} n / t^\alpha) \;\;\; \forall t > 0 ,
\]
for any $\alpha \in (0,2/3)$, while we are given that $K_1 \subset D B_2^n$. It follows that $K$ is $\alpha$-regular with constant $R = C_{A,B,\alpha}^{1/\alpha} D$. \\
In particular, if $K$ is sub-Gaussian, Proposition \ref{prop:pure-1} ensures that $\lambda_K$ is $1$-pure, while it is also well-known (e.g. \cite{PaourisPsi2Behaviour}) that $K  \simeq Z_n(\lambda_K) \subset D \sqrt{n} Z_2(\lambda_K)$ where $D$ is the $\Psi_2$ constant of $K$, which is assumed to be bounded by a universal constant. In addition, Proposition \ref{prop:pure-1} ensures that $\lambda_K$ is $1$-pure if $K$ is unconditional, and so if in addition $K \subset D \sqrt{n} Z_2(\lambda_K)$, then it is $\frac{1}{2}$-regular with constant $R = C D$.
\end{proof}

To describe another important class of regular small-diameter bodies, recall that the (Gaussian) type-2 constant of a normed space $(X,\norm{\cdot})$ over $\Real$,
denoted $T_2(X)$, is the minimal $T>0$ for which:
\[
\brac{\E \snorm{\sum_{i=1}^m G_i x_i}^2}^{\frac{1}{2}} \leq T \brac{\sum_{i=1}^m \norm{x_i}^2}^{\frac{1}{2}}
\]
for any $m \geq 1$ and any $x_1,\ldots,x_m \in X$, where
$G_1,\ldots,G_m$ denote independent real-valued standard Gaussian
random variables. 
We will often identify between a normed space and its unit-ball, and given an origin-symmetric convex body $K \subset \Real^n$, 
refer to the type-2 constant $T_2(X_K)$ of the normed space $X_K$ whose unit-ball is $K$. 
We will not distinguish between the Gaussian and the Rademacher type-2 constants, since it is well known that the former constant is
always majorized by the latter one (e.g. \cite{Milman-Schechtman-Book}), and all our results will involve
upper bounds in terms of the Gaussian type-2 constant. 

Note that a Hilbert-space has type-2 constant exactly $1$. It is also well-known (e.g. \cite{Milman-Schechtman-Book}) that subspaces of $L_p$ for $p \geq 2$ have type-2 constant of the order of $\sqrt{p}$. In a finite dimensional setting, it is clear by John's theorem that $T_2(X_K) \leq \sqrt{n}$ for all origin-symmetric $K \subset \Real^n$. Since $\ell_\infty^n$ is isomorphic to a subspace of $L_{\log n}$, it similarly follows that $T_2(\ell_\infty^n) \leq C \sqrt{\log n}$, and in fact this is the correct order. 
\begin{prop} \label{prop:type-2}
Every origin-symmetric convex body $K \subset \Real^n$ is $2$-regular small-diameter with constant $C T_2(X_K)$, for some universal constant $C \geq 1$. 
\end{prop}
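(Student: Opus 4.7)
The plan is to reduce the statement to producing a single well-chosen position of $K$ and then applying the classical Dual Sudakov Minoration from the Introduction. Concretely, I would aim to find $T \in GL_n$ such that $K_0 := T K$ satisfies simultaneously
\[
\ell(K_0) \leq C_1 \sqrt{n} \quad \text{and} \quad K_0 \subset C_2\, T_2(X_K)\, B_2^n. \tag{$\ast$}
\]
Once $(\ast)$ is in hand, the Dual Sudakov Minoration together with the packing-covering relation (\ref{eq:cover-pack}) gives
\[
N(B_2^n, t K_0) \leq M(B_2^n, (t/2) K_0) \leq \exp\!\brac{C_3 \ell(K_0)^2/t^2} \leq \exp\!\brac{C_4 n/t^2} \;\;\; \forall t > 0,
\]
and a further scaling of $K_0$ by the universal constant $\sqrt{C_4}$ absorbs the extra factor, producing $N(B_2^n, tK_0) \leq \exp(n/t^2)$ while keeping the diameter of the rescaled body bounded by $\sqrt{C_4}\, C_2\, T_2(X_K)$, as required by the definition of $2$-regular small-diameter.

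To produce a position satisfying $(\ast)$, the natural tool is the Lewis position (or the Figiel--Tomczak-Jaegermann $\ell$-position): there exists a Euclidean structure on $\Real^n$, with orthonormal basis $(e_i)$, in which the type-$2$ inequality applied to $(e_i)$ yields
\[
\ell(K)^2 \leq \E\snorm{\sum_i G_i e_i}_K^2 \leq T_2(X_K)^2 \sum_i \norm{e_i}_K^2,
\]
and, by the duality/optimality built into the construction of the Lewis ellipsoid, the geometric mean of $\norm{e_i}_K$ is of the correct order $1/\sqrt{n}$ times the $\ell$-norm. After an appropriate rescaling of this Euclidean structure, I expect to arrange $\sum_i \norm{e_i}_{K_0}^2 \leq n$ (so that $\ell(K_0) \leq T_2(X_K) \sqrt{n}$) while the Lewis relations simultaneously force the diameter condition $K_0 \subset C_2 T_2(X_K) B_2^n$. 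One then rescales once more to turn the $T_2$-factor in the bound on $\ell(K_0)$ into a diameter factor, trading the two scale-dual quantities so that $\ell(K_0) \leq C_1 \sqrt{n}$ while the diameter remains of order $T_2(X_K)$.

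The main obstacle is the construction in the previous paragraph. The bound $\ell(K_0) \leq T_2 \sqrt{n}$ follows in any Euclidean structure in which $\sum \norm{e_i}_{K_0}^2 \leq n$ (e.g.\ John position), but that alone only yields a circumradius of order $T_2 \sqrt{n}$, not $T_2$. Improving $\sqrt{n}$ down to $1$ in the diameter requires exploiting the type-$2$ constant \emph{also} in the direction of controlling diameters of sections/projections, which is precisely the content of the Lewis/$\ell$-position and of related results of Pisier on duality of entropy numbers. The scale-invariant quantity $\ell(K_0)\cdot\operatorname{diam}(K_0)$ must be brought down to $O(T_2(X_K)\sqrt{n})$, which requires choosing the position to balance primal and dual simultaneously — this balancing is the non-trivial ingredient, and is where the full strength of type-$2$ (as opposed to a single application of the type-$2$ inequality to a fixed basis) enters.
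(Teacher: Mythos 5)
Your reduction --- find a position $K_0$ with $\ell(K_0)\leq C_1\sqrt{n}$ and $K_0\subset C_2\,T_2(X_K)B_2^n$, then apply Dual Sudakov Minoration and rescale --- is exactly the architecture of the paper's proof, and the final step is carried out correctly. But the existence of the position $(\ast)$ is the entire non-trivial content of the proposition, and you have not established it: you explicitly flag the construction as ``the main obstacle'' and gesture at the Lewis/$\ell$-position without carrying the argument through. As written, this is a genuine gap, not a proof.

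The gap is filled not by the Lewis position but by the \emph{L\"owner position} together with a lemma of Davis--Milman--Tomczak-Jaegermann: if $B_2^n$ is the minimal-volume ellipsoid containing $K$, then $M(K)\leq M_2(K):=\brac{\int_{S^{n-1}}\norm{\theta}_K^2\,d\sigma(\theta)}^{1/2}\leq T_2(X_K)$, equivalently $\ell(K)\leq C\sqrt{n}\,T_2(X_K)$, while $K\subset B_2^n$ holds for free; setting $K_0:=C\,T_2(X_K)\,K$ then yields $(\ast)$ exactly. Your closing observation --- that a single application of the type-$2$ inequality to a fixed orthonormal basis only gives circumradius of order $T_2\sqrt{n}$ --- is correct, but the remedy is John's decomposition of the identity rather than the Lewis ellipsoid: in the L\"owner position there are contact points $u_j\in\partial K\cap S^{n-1}$ (so $\norm{u_j}_K=1$) and weights $c_j>0$ with $\sum_j c_j\,u_j\otimes u_j=\mathrm{Id}$ and $\sum_j c_j=n$, and applying type-$2$ to the vectors $\sqrt{c_j}\,u_j$ gives
\[
\E\snorm{G}_K^2=\E\snorm{\textstyle\sum_j g_j\sqrt{c_j}\,u_j}_K^2\leq T_2(X_K)^2\sum_j c_j\norm{u_j}_K^2=T_2(X_K)^2\,n ,
\]
since $\sum_j g_j\sqrt{c_j}u_j$ is a standard Gaussian vector. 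This is precisely the statement $\ell(K)\leq \sqrt{n}\,T_2(X_K)$ in a position where $\diam(K)\leq 2$, i.e.\ the balancing you were missing. If you intend to keep the Lewis-position route instead, you must actually prove the analogous two-sided control there, which you have not done.
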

\begin{proof}
It was shown by W.~J.~ Davis, V.~Milman and N.~Tomczak-Jaegermann \cite{Davis-etal-Lemma} using operator-theoretic notation, and in \cite[Corollary 3.5]{EMilman-DualMixedVolumes} using a geometric argument, that when $B_2^n$ is the minimal volume ellipsoid containing $K$ (the Lowner position), then:
\[
M(K) \leq M_2(K) := \brac{\int_{S^{n-1}} \norm{\theta}_K^2 d\sigma(\theta)}^{1/2} \leq T_2(X_K) .
\]
Setting $K_0 := R K$ with $R = C T_2(X_K)$ for an appropriate constant $C \geq 1$, we have that $M(K_0) \leq \frac{1}{C}$, and hence by the Dual Sudakov Minoration:
\[
N(B_2^n, t K_0) \leq \exp(n / t^2) \;\;\; \forall t > 0 .
\]
Since $K_0 \subset R B_2^n$, the assertion is established. 
\end{proof}

\begin{rem} \label{rem:lq}
Applying Proposition \ref{prop:psi2}, we may conclude that the unit-balls $B_q^n$ of $\ell_q^n$, which for all $q \in [2,\infty]$ are both $\Psi_2$ (see e.g. \cite{BGMN}) and small-diameter unconditional, are $1/2$-regular with uniformly bounded universal constant $C > 0$. Note that by the previous remarks, we cannot get a uniform estimate for the type-2 constant of $\ell_q^n$ in the range $q \in [2,\infty]$, and the precise covering estimates due to C.~Sch\"utt \cite[Theorem 1]{Schutt-Entropy-numbers} imply that $\ell_\infty^n$ is not 2-regular small-diameter with dimension-independent constant. However,  Sch\"utt's estimates yield that for all $q \in [2,\infty]$ and $\eps > 0$, $B_q^n$ is in fact $(2-\eps)$-regular small-diameter with constant $C_\eps>0$ depending only on $\eps > 0$. For simplicity, as this is not crucial 
for any of our ensuing estimates, we will only use below that they are all $1$-regular small-diameter with a uniformly bounded universal constant $C>0$ for all $q \in [2,\infty]$. 
\end{rem}

\subsubsection{Weak Generalized Regular Dual Sudakov}

\begin{thm} \label{thm:part2-smalldiam}
Let $\mu$ denote an origin-symmetric log-concave probability measure on $\Real^n$, and let $K \subset \Real^n$ denote an $\alpha$-regular small-diameter convex body (with constants $\alpha \in (0,2]$ and $R\geq 1$). Then for any $p \in [1,n]$:
\begin{equation} \label{eq:type-2-gen}
N(Z_p(\mu) , t m_1(\mu,  K) K) \leq \exp \brac{ C (n (R / t)^\alpha)^{\frac{2}{2+\alpha}} p^{\frac{\alpha}{2+\alpha}} + C (n (R / t)^\alpha)^{\frac{1}{1+\alpha}} p^{\frac{\alpha}{1+\alpha}}}  \;\;\; \forall t > 0 . 
\end{equation}
In particular:
\[
N(Z_p(\mu) , T_2(X_K) m_1(\mu, K) K) \leq \exp \brac{ C' \sqrt{n} \sqrt{p}}  ,
\]
and for all $q \in [2,\infty]$:
\[
N(Z_p(\mu) , t m_1(\mu,B_q^n)  B_q^n) \leq \exp \brac{ C'' \frac{p^{1/3} n^{2/3}}{t^{2/3}} + C'' \frac{\sqrt{p} \sqrt{n}}{\sqrt{t}} } \;\;\; \forall t > 0 . 
\]
\end{thm}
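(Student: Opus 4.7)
The approach parallels the proof of Theorem \ref{thm:Zpn}: use a one-step triangle inequality for covering numbers through an intermediate Euclidean ball, bound the ``$Z_p(\mu)$-to-$B_2^n$'' cover by the Generalized Dual Sudakov estimate for ellipsoids (Theorem \ref{thm:RegularSudakovEllipsoids}), bound the ``$B_2^n$-to-$K$'' cover by the $\alpha$-regularity hypothesis, and then optimize over the intermediate scale.

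First, since $N(Z_p(\mu), t m_1(\mu,K) K)$ is invariant under simultaneously applying a non-singular linear transformation to $\mu$ and $K$, we may assume that the map $T$ provided by the definition of $\alpha$-regular small-diameter is the identity, so that $K \subset R B_2^n$ and $N(B_2^n, sK) \leq \exp(n/s^\alpha)$ for all $s > 0$. For any $u>0$:
\[
N(Z_p(\mu), t m_1(\mu,K) K) \leq N(Z_p(\mu), u B_2^n) \cdot N(u B_2^n, t m_1(\mu,K) K).
\]
The second factor is bounded directly by the regularity assumption, giving $\exp\brac{n \brac{u/(t m_1(\mu,K))}^\alpha}$. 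For the first factor, Theorem \ref{thm:RegularSudakovEllipsoids} applied to the ellipsoid $\Eps=B_2^n$ and rewritten in terms of the radius $u$ gives
\[
N(Z_p(\mu), u B_2^n) \leq \exp\brac{ C p m_1(\mu,B_2^n)^2/u^2 + C p m_1(\mu,B_2^n)/u}.
\]
The key link between the two bounds is that since $K \subset R B_2^n$ implies $\abs{x} \leq R \norm{x}_K$ pointwise, we have $I_1(\mu,B_2^n) \leq R\, I_1(\mu,K)$, and so by Lemma \ref{lem:Guedon} also $m_1(\mu,B_2^n) \leq C' R\, m_1(\mu,K)$.

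Introducing the dimensionless variable $v := u/(t m_1(\mu,K))$ and combining the two estimates, I obtain
\[
N(Z_p(\mu), t m_1(\mu,K) K) \leq \exp\brac{ C'' p R^2/(v t)^2 + C'' p R/(v t) + n v^\alpha}.
\]
It remains to optimize the single free parameter $v>0$. Balancing the first and third terms gives $v_1 \simeq (p R^2/(n t^2))^{1/(\alpha+2)}$ with contribution of the correct order $(n(R/t)^\alpha)^{2/(2+\alpha)} p^{\alpha/(2+\alpha)}$, while balancing the second and third yields $v_2 \simeq (pR/(nt))^{1/(\alpha+1)}$ with contribution $(n(R/t)^\alpha)^{1/(1+\alpha)} p^{\alpha/(1+\alpha)}$. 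An elementary case analysis (evaluating at $v_1$ when $n(R/t)^\alpha \geq p$ and at $v_2$ otherwise, then checking that the ``non-balanced'' leftover term is automatically dominated by the other target expression with exponent $1/((\alpha+1)(\alpha+2))$ in the relevant ratio) yields (\ref{eq:type-2-gen}).

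The two explicit corollaries are immediate substitutions: Proposition \ref{prop:type-2} supplies $\alpha=2$ and $R=CT_2(X_K)$, and choosing $t=T_2(X_K)$ makes the first term $\exp(C'\sqrt{np})$ dominate throughout $p\in[1,n]$; Remark \ref{rem:lq} gives $\alpha=1$ with a universal $R\leq C$, turning the exponents $2/(2+\alpha)$ and $1/(1+\alpha)$ into $2/3$ and $1/2$, matching the stated bound for $B_q^n$. The only real obstacle is the combined three-term optimization, but the exponent arithmetic in the case analysis works out cleanly.
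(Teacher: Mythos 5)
Your proposal is correct and follows essentially the same route as the paper: reduce by linear invariance to the normalized position, split the covering number through an intermediate Euclidean ball, control the two factors by Theorem \ref{thm:RegularSudakovEllipsoids} and the $\alpha$-regularity hypothesis (linked via $I_1(\mu,B_2^n)\leq R\,I_1(\mu,K)$ from the small-diameter property), and optimize the intermediate scale by taking the larger of the two balancing choices. The only cosmetic difference is that you work with $m_1$ throughout while the paper works with $I_1$ and invokes Lemma \ref{lem:Guedon} at the end.
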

\begin{proof}
Since the statement is clearly invariant under applying (non-degenerate) linear transformations on both $\mu$ and $K$, we may and will assume that $K \subset R B_2^n$ and:
\[
N(B_2^n, t K) \leq \exp(n / t^\alpha) \;\;\; \forall t > 0 .
\]
The small-diameter property ensures that:
\[
I_1(\mu) = \int \abs{x} d\mu(x) \leq R \int \norm{x}_K d\mu(x) = R I_1(\mu,K) . 
\]
Together with the Generalized Regular Dual Sudakov estimate for ellipsoids (Theorem \ref{thm:RegularSudakovEllipsoids}), we obtain for any $s > 0$:
\begin{align*}
N(Z_p(\mu) , t I_1(\mu,K) K) & \leq N(Z_p(\mu) , s I_1(\mu) B_2^n) N(I_1(\mu) B_2^n , (t/s) I_1(\mu,K) K ) \\
& \leq N(Z_p(\mu) , s I_1(\mu) B_2^n) N(B_2^n , t/(R s) K ) \\
& \leq \exp\brac{ C  \brac{\frac{p}{s^2} + \frac{p}{s}} + n \frac{R^\alpha s^{\alpha}}{t^{\alpha}} } . 
\end{align*}
Optimizing on $s > 0$, we set $s = C' \max(s_{1+\alpha}, s_{2+\alpha})$ where $s_\beta := \brac{\frac{p}{n} \brac{\frac{t}{R}}^\alpha}^{1/\beta}$. Recalling that $I_1(\mu,K) \simeq m_1(\mu,K)$ by Lemma \ref{lem:Guedon}, we obtain the first assertion. Applying the first part with $\alpha=2$, $t = C'' T_2(X_K)$ and invoking Proposition \ref{prop:type-2}, the second assertion follows after an adjustment of constants. The last assertion follows in view of Remark \ref{rem:lq}. 
\end{proof}

Note that by Lemma \ref{lem:ZnHuge} we always have:
\[
m_1(\mu,Z_n(\mu)) \simeq I_1(\mu,Z_n(\mu)) \leq C ,
\]
and in addition $N(Z_p(\mu) , t Z_n(\mu)) = 1$ for all $p \in [1,n]$ and $t \geq 1$. Consequently, when $K = Z_n(\mu)$, only the first term on the right-hand-side of (\ref{eq:type-2-gen}) is relevant, and we obtain:

\begin{cor} \label{cor:type-2}
Let $\mu$ denote an origin-symmetric log-concave probability measure on $\Real^n$, so that $Z_n(\mu)$ is $\alpha$-regular small-diameter (with constants $\alpha \in (0,2]$ and $R\geq 1$). Then for $p \in [1,n]$:
\[
N\brac{Z_p(\mu) , t \sqrt{\frac{p}{n}} Z_n(\mu)} \leq \exp \brac{ C n \frac{R^{\frac{2\alpha}{2+\alpha}}}{t^{\frac{2\alpha}{2+\alpha}}} }  \;\;\; \forall t > 0 . 
\]
In particular:
\[
N\brac{Z_p(\mu) , t \sqrt{\frac{p}{n}} Z_n(\mu)} \leq \exp \brac{ C n  \frac{T_2(X_{Z_n(\mu)})}{t} }  \;\;\; \forall t > 0 . 
\]
\end{cor}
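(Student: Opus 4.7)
The plan is to invoke Theorem \ref{thm:part2-smalldiam} with $K = Z_n(\mu)$ and convert the resulting estimate into the desired form by a simple rescaling. First I recall from Lemma \ref{lem:ZnHuge} that $m_1(\mu, Z_n(\mu)) \simeq I_1(\mu, Z_n(\mu)) \leq C$, so up to an absolute constant the scale $m_1(\mu, Z_n(\mu))$ appearing in Theorem \ref{thm:part2-smalldiam} can be replaced by $1$. Applying that theorem at scale $s > 0$ therefore gives
\[
N(Z_p(\mu) , s\, Z_n(\mu)) \leq \exp \brac{ C_1 (n (R / s)^\alpha)^{\frac{2}{2+\alpha}} p^{\frac{\alpha}{2+\alpha}} + C_1 (n (R / s)^\alpha)^{\frac{1}{1+\alpha}} p^{\frac{\alpha}{1+\alpha}} }.
\]
Setting $s = t \sqrt{p/n}$ and performing the elementary exponent arithmetic, the first summand simplifies to exactly $C_2 \, n (R/t)^{2\alpha/(2+\alpha)}$, which is the bound claimed in the corollary.

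The remaining point is to absorb the second summand into the first. Here I use that $Z_p(\mu) \subseteq Z_n(\mu)$, so $N(Z_p(\mu), t \sqrt{p/n}\, Z_n(\mu)) = 1$ whenever $t \sqrt{p/n} \geq 1$; the corollary is thus trivially satisfied outside the range $t \leq \sqrt{n/p}$. In the non-trivial range, a direct exponent comparison shows that the ratio of the second summand to the first equals
\[
(p/n)^{\alpha/(2(1+\alpha))}\, (t/R)^{\alpha^2/((1+\alpha)(2+\alpha))},
\]
and combining $t \leq \sqrt{n/p}$ with $R \geq 1$ makes this product bounded by $(p/n)^{\alpha/((1+\alpha)(2+\alpha))} \leq 1$. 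Hence the first summand dominates (up to an absolute constant), yielding the main assertion.

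Finally, the ``in particular'' statement follows by taking $\alpha = 2$, for which $2\alpha/(2+\alpha) = 1$, and invoking Proposition \ref{prop:type-2}, which guarantees that the origin-symmetric convex body $Z_n(\mu)$ is $2$-regular small-diameter with constant $R \leq C\, T_2(X_{Z_n(\mu)})$. I do not anticipate any serious obstacle here: the corollary is a clean consequence of Theorem \ref{thm:part2-smalldiam} once the trivial range of $t$ is pinned down via the inclusion $Z_p(\mu) \subseteq Z_n(\mu)$; the only work is the exponent bookkeeping needed to verify the dominance of the first summand in the non-trivial range.
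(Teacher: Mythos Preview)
Your proof is correct and follows essentially the same approach as the paper: apply Theorem~\ref{thm:part2-smalldiam} with $K = Z_n(\mu)$, use Lemma~\ref{lem:ZnHuge} to replace $m_1(\mu,Z_n(\mu))$ by a constant, and use the inclusion $Z_p(\mu) \subset Z_n(\mu)$ to identify the trivial range $t \geq \sqrt{n/p}$, in which the second summand is dominated by the first. The only difference is that you carry out the exponent bookkeeping explicitly, whereas the paper simply asserts that ``only the first term on the right-hand-side of~(\ref{eq:type-2-gen}) is relevant'' in the non-trivial range.
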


\medskip
In analogy with the previous subsection, we deduce:

\begin{thm}
Let $\mu$ denote an origin-symmetric log-concave probability measure on $\Real^n$, so that $Z_n(\mu)$ is $\alpha$-regular small-diameter (with constants $\alpha \in (0,2]$ and $R\geq 1$). Then for $p \in [1,n]$, $q > 0$ and star-body $L \subset \Real^n$:
\[
M\brac{Z_p(\mu) , t \sqrt{\frac{p}{n}} m_q(\mu,L) L}  \leq \exp \brac{1 + q + C  n \brac{\frac{R}{t}}^{\frac{2\alpha}{3\alpha+2}} } \;\;\; \forall t > 0.
\]
In particular:
\[
M(Z_p(\mu) , m_p(\mu, L) L) \leq \exp \brac{ C' \sqrt{T_2(X_{Z_n(\mu)})} \sqrt{n} \sqrt{p}}  .
\]
\end{thm}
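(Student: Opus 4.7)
The plan is to mirror the strategy of Theorem \ref{thm:part2-pure}, with the role of Theorem \ref{thm:Zpn} (the covering estimate for pure measures) taken here by Corollary \ref{cor:type-2}, which provides the analogous estimate under the $\alpha$-regular small-diameter hypothesis on $Z_n(\mu)$. The starting point is the submultiplicativity of packing/covering numbers, inserting the intermediate body $s\sqrt{p/n}\, Z_n(\mu)$ for a parameter $s>0$ to be optimized:
\[
M\brac{Z_p(\mu),\; t\sqrt{\tfrac{p}{n}}\, m_q(\mu,L) L} \leq N\brac{Z_p(\mu),\; s\sqrt{\tfrac{p}{n}}\, Z_n(\mu)} \cdot M\brac{Z_n(\mu),\; \tfrac{t}{s}\, m_q(\mu,L) L}.
\]

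I then bound each factor separately. Corollary \ref{cor:type-2} yields
\[
N\brac{Z_p(\mu),\; s\sqrt{\tfrac{p}{n}}\, Z_n(\mu)} \leq \exp\brac{ C n\, (R/s)^{\beta_0}}, \qquad \beta_0 := \tfrac{2\alpha}{2+\alpha},
\]
while Theorem \ref{thm:part3} applied in the case $p=n$ gives
\[
M\brac{Z_n(\mu),\; \tfrac{t}{s}\, m_q(\mu,L) L} \leq \exp\brac{1 + q + C' n s/t}.
\]
Multiplying, the combined exponent takes the form $C n (R/s)^{\beta_0} + C' n s/t + (1+q)$, and balancing the two $s$-dependent terms by choosing $s^{\beta_0+1} \simeq R^{\beta_0} t$ yields a combined contribution of order $n (R/t)^{\beta_0/(\beta_0+1)}$. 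The elementary identity
\[
\frac{\beta_0}{\beta_0+1} = \frac{2\alpha/(2+\alpha)}{2\alpha/(2+\alpha)+1} = \frac{2\alpha}{3\alpha+2}
\]
matches the exponent announced in the statement.

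For the ``in particular'' bound, I plan to specialize to $\alpha=2$ (so that Proposition \ref{prop:type-2} provides $R \simeq T_2(X_{Z_n(\mu)})$ as the admissible regularity constant) and then take $q=p$ together with a choice of $t$ that aligns $t\sqrt{p/n}\, m_q(\mu,L) L$ with the target body $m_p(\mu,L) L$. I do not anticipate any serious obstacle beyond bookkeeping: both Corollary \ref{cor:type-2} and Theorem \ref{thm:part3} are already in hand, and the only delicate step is the one-parameter optimization in $s$. The main point to verify carefully is that the specialization produces the claimed $\sqrt{T_2(X_{Z_n(\mu)})}\sqrt{n}\sqrt{p}$ shape; this may require invoking the sharper form of Theorem \ref{thm:part2-smalldiam} applied with $K = Z_n(\mu)$ (in place of the weaker Corollary \ref{cor:type-2}) in order to extract the optimal power of $T_2$ from the first factor of the decomposition.
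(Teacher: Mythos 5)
Your proof of the main display is exactly the paper's: the same decomposition through the intermediate body $s\sqrt{p/n}\,Z_n(\mu)$, the same two inputs (Corollary \ref{cor:type-2} and Theorem \ref{thm:part3} at $p=n$), and the same optimization $s^{\beta_0+1}\simeq R^{\beta_0}t$ with $\beta_0=\tfrac{2\alpha}{2+\alpha}$, which indeed produces the exponent $\tfrac{2\alpha}{3\alpha+2}$. That part is complete and correct.

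Your hesitation about the ``in particular'' is warranted, and you should not expect the bookkeeping to come out as stated. The paper's own justification is precisely the specialization you describe (Proposition \ref{prop:type-2} gives $\alpha=2$, $R\simeq T_2(X_{Z_n(\mu)})$, then $q=p$ and $t=\sqrt{n/p}$), and plugging into the main display with $\tfrac{2\alpha}{3\alpha+2}=\tfrac12$ yields
\[
n\brac{\frac{R}{t}}^{1/2}=n\,R^{1/2}\brac{\frac{p}{n}}^{1/4}=\sqrt{T_2(X_{Z_n(\mu)})}\;n^{3/4}p^{1/4},
\]
which for $p\leq n$ is \emph{weaker} than the announced $\sqrt{T_2(X_{Z_n(\mu)})}\,\sqrt{n}\sqrt{p}$ by a factor $(n/p)^{1/4}$. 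Your proposed repair --- replacing Corollary \ref{cor:type-2} by the sharper first term of Theorem \ref{thm:part2-smalldiam} applied to $K=Z_n(\mu)$ --- does not close this gap: the bottleneck is not the first factor but the second, namely $M\brac{Z_n(\mu),\tau\, m_p(\mu,L)L}\leq \exp(1+p+Cn/\tau)$ from Theorem \ref{thm:part3}, whose $n/\tau$ term forces the optimized product to be at least of order $\sqrt{T_2}\,n^{3/4}p^{1/4}$ no matter how the first factor is sharpened (balancing $\sqrt{np}\,R/t'$ against $n t'$ gives the same $\sqrt{R}\,n^{3/4}p^{1/4}$). In general the specialization $t=\sqrt{n/p}$ turns $n(R/t)^{\beta}$ into $n^{1-\beta/2}p^{\beta/2}$, which equals $\sqrt{n}\sqrt{p}$ only when $\beta=1$, unattainable for $\beta=\tfrac{2\alpha}{3\alpha+2}$ with $\alpha>0$. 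So either the exponent in the ``in particular'' should read $n^{3/4}p^{1/4}$ (the safe conclusion, and what both your route and the paper's actually deliver), or an argument genuinely beyond the stated tools is required; you should flag this rather than attempt to force the $\sqrt{n}\sqrt{p}$ shape.
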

\begin{proof}
By the triangle inequality for packing numbers, we have for every $s > 0$:
\[
M\brac{Z_p(\mu) , t \sqrt{\frac{p}{n}} m_q(\mu,L) L}  \leq N\brac{Z_p(\mu) , s  \sqrt{\frac{p}{n}} Z_n(\mu)} M\brac{Z_n(\mu) , \frac{t}{s} m_q(\mu,L) L} . 
\]
Invoking Corollary \ref{cor:type-2} and Theorem \ref{thm:part3} to estimate the terms on the right-hand-side, we obtain:
\[
M\brac{Z_p(\mu) , t \sqrt{\frac{p}{n}} m_q(\mu,L) L}  \leq \exp \brac{ C n \frac{R^{\frac{2\alpha}{2+\alpha}}}{s^{\frac{2\alpha}{2+\alpha}}} + 1 + q + C' n \frac{s}{t}} .  
\]
Optimizing on $s>0$, we set $s := t^{\frac{\alpha+2}{3\alpha+2}} R^{\frac{2\alpha}{3\alpha+2}}$, establishing the assertion after adjustment of constants. The last part follows by Proposition \ref{prop:type-2}.
\end{proof}

\section{Part 1 - Combinatorial Dimension Reduction: Cube} \label{sec:part1-cubes}

In this section, we establish Part 1 of The (full) Program for the case that $K = B_\infty^n$, the $n$-dimensional cube, albeit with $D = C \log(e+n)$ and $R = C \log \log (e+n)$. Contrary to the linear ``One-Sided Johnson--Lindenstrauss" approach that worked well for $K = B_2^n$, we employ a non-linear combinatorial dimension reduction, based on the fundamental work of M.~Rudelson and R.~Vershynin \cite{RudelsonVershynin-CombDim} on the combinatorial dimension, extending the work of Mendelson and Vershynin from \cite{MendelsonVershynin-CombDim}.

\subsection{Part 1 via Cell Content and Combinatorial Dimension}

Denote by $G_{\text{crd}}$ the collection of all $2^n$ coordinate subspaces of $\Real^n$ (of arbitrary dimension $m=0,1,\ldots,n$). 
Given a convex body $K \subset \Real^n$, its cell content $\Sigma(K)$ is defined as:
\[
\Sigma(K) := \sum_{F \in G_{\text{crd}}} \text{number of integer cells contained in $P_F K$} ,
\]
where an integer cell is defined as a unit-cube with integer coordinates, i.e. $x + [0,1]^m$ with $x \in \mathbb{Z}^m$. When $F = \set{0}$, the number of integer cells contained in $P_F K$ is defined to be $1$. The combinatorial dimension $v(K)$ is defined to be:
\[
v(K) := \max \set{ \text{dim}(F) \; ; \; F \in  G_{\text{crd}} \text{ and $P_F K$ contains at least one integer cell} }.
\]

\medskip
Recall that $B_\infty^n  := [-1,1]^n$. The combinatorial information we will require is summarized in the following theorem, which is a particular case of \cite[Theorem 4.2]{RudelsonVershynin-CombDim}:

\begin{thm}[Rudelson--Vershynin] \label{thm:RV}
Let $K \subset \Real^n$ denote a convex body so that $N(K , B_\infty^n) \geq \exp(a n)$, $a > 0$. Then for all $\eps > 0$:
\[
N(K,B_\infty^n) \leq \brac{\Sigma \brac{\frac{C}{\eps} K }}^{M_\eps} ~,~ M_\eps := 4 \log^{\eps}(e + 1/a) . 
\]
\end{thm}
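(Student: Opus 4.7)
The statement is cited as a particular case of Theorem 4.2 from Rudelson--Vershynin \cite{RudelsonVershynin-CombDim}, so my plan would be to invoke their result essentially as a black box. Let me nevertheless sketch the strategy underlying such a bound, as this is what I would attempt if I had to reproduce it from scratch.

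The starting point is to fix a maximal $B_\infty^n$-separated family $\{x_i\}_{i=1,\ldots,N} \subset K$ with $N \geq N(K,B_\infty^n) / 2^n$ (via the packing/covering relation $M(K,B_\infty^n) \leq N(K,B_\infty^n)$, together with $B_\infty^n$ being origin-symmetric); for each pair $i \neq j$ there is at least one coordinate where $x_i$ and $x_j$ differ by more than $1$. A Sauer--Shelah--Pajor induction on coordinates (the standard combinatorial-dimension estimate, originating in Mendelson--Vershynin) would then yield a ``soft'' bound of the shape $N \leq \Sigma(C K)$: at each step one either marks a coordinate as contributing an integer cell in some coordinate projection, or projects it out. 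This corresponds to the case $\eps = 1$ of the theorem (up to adjusting the constant $C$), and gives an exponent $M_1 \sim \log(e + 1/a)$ via a straightforward chaining.

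The crux is amplifying this to the refined exponent $M_\eps = 4 \log^\eps(e + 1/a)$ for arbitrarily small $\eps > 0$, at the cost of replacing $K$ by $K/\eps$ inside $\Sigma$. My plan would be to set up a dyadic scale-decomposition: cover at successive scales $\eps_k = 2^{-k}$, and at each scale apply the soft bound. The naive telescoping across all $\sim \log(1/a)$ scales gives a product with $\sim \log(1/a)$ factors of $\Sigma(CK/\eps)$, which recovers only $M \sim \log(1/a)$. The improvement to $\log^\eps(1/a)$ requires a probabilistic amplification --- a random sub-family at each dyadic scale is chosen so that with high probability only a \emph{sparse} subset of scales, of cardinality $\sim \log^\eps(1/a)$, is ``active'', i.e.\ contributes a multiplicative factor of $\Sigma(CK/\eps)$. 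The remaining scales contribute trivially, and a union bound over the sparse active set controls the full chain.

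The main obstacle I anticipate is precisely this sub-logarithmic scale count $\log^\eps$: the classical chaining over dyadic scales naturally produces $\log(1/a)$, and squeezing it down to $\log^\eps(1/a)$ for every $\eps > 0$ demands the delicate probabilistic argument of Rudelson--Vershynin, in which one carefully balances the shattering extraction against the scale-selection probability. Since reproducing this from scratch would essentially re-derive \cite[Theorem 4.2]{RudelsonVershynin-CombDim}, my concrete ``proof'' would simply be to invoke that theorem, verifying that its hypotheses match the present setting with $L = B_\infty^n$ and the cell-content notation $\Sigma(\cdot)$ as defined above.
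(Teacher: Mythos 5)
Your proposal matches the paper exactly: the paper offers no proof of this theorem and simply quotes it as a particular case of \cite[Theorem 4.2]{RudelsonVershynin-CombDim}, which is precisely your concrete plan. (Your heuristic sketch of the internal mechanism --- in particular the ``probabilistic amplification over random sub-families of dyadic scales'' --- does not accurately describe Rudelson--Vershynin's deterministic iteration on the cell content, but since you invoke their theorem as a black box this is immaterial.)
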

We will also require an additional standard combinatorial lemma (see \cite[Lemma 4.6]{RudelsonVershynin-CombDim}), which may be seen as an integer-valued extension of the Sauer-Shelah lemma:
\begin{lem} \label{lem:SS}
If $K \subset a B_\infty^n$ then:  
\[
\Sigma(K) \leq \brac{\frac{C a n}{v(K)}}^{v(K)} . 
\]
\end{lem}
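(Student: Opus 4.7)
I plan to prove this by a Sauer--Shelah style counting argument, adapted to the integer-valued combinatorial dimension $d := v(K)$. The definition of $v(K)$ forces every coordinate subspace $F \in G_{\text{crd}}$ with $\dim F > d$ to have $P_F K$ containing no integer cell, so only subspaces of dimension $m \leq d$ contribute to $\Sigma(K)$. For any such $F$, the hypothesis $K \subset a B_\infty^n$ gives $P_F K \subset a B_\infty^m$, and an integer cell $x + [0,1]^m$ fits inside $aB_\infty^m$ only when each coordinate of $x \in \mathbb{Z}^m$ lies in $[-a, a-1]$, giving at most $(2a+1)^m \leq (3a)^m$ cells in the main regime $a \geq 1$. (When $a < 1$, no integer coordinate qualifies, so $v(K) = 0$ and $\Sigma(K) = 1$; the claim is then trivial under the convention $0^0 = 1$.) Combining with the $\binom{n}{m} \leq (en/m)^m$ choices of $F$ of dimension $m$, the bound reduces to
\[
\Sigma(K) \;\leq\; \sum_{m=0}^{d} \binom{n}{m} (3a)^m.
\]

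To close the argument, I would invoke the Sauer--Shelah type inequality
\[
\sum_{m=0}^{d} \binom{n}{m} t^m \;\leq\; \brac{\frac{etn}{d}}^{d}, \qquad t \geq 1,\ 1 \leq d \leq n,
\]
applied with $t = 3a$, yielding the lemma with $C = 3e$. This inequality is the only nontrivial ingredient, and the step I regard as the main content of the proof. I would derive it by the standard trick of introducing a parameter $\lambda \geq 1$: since $\lambda^{d-m} \geq 1$ for $m \leq d$,
\[
\sum_{m=0}^{d} \binom{n}{m} t^m \;\leq\; \sum_{m=0}^{n} \binom{n}{m} t^m \lambda^{d-m} \;=\; \lambda^d \brac{1 + \tfrac{t}{\lambda}}^n,
\]
and then optimize in $\lambda$. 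The critical point $\lambda = (n-d)t/d$ (for $d < n$, and in the regime where it is $\geq 1$) gives $(tn/d)^d (n/(n-d))^{n-d}$, which is bounded by $(etn/d)^d$ via $(1+d/(n-d))^{n-d} \leq e^d$; the boundary case $d = n$ is handled directly from $(1+t)^n \leq (et)^n$ for $t \geq 1$.

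I do not foresee any conceptual obstacle---the only care required is tracking the universal constant cleanly through the optimization above and verifying the degenerate endpoints ($d=0$, $d=n$, and $a<1$), which is routine bookkeeping.
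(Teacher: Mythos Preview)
Your proof is correct and is precisely the standard Sauer--Shelah-type counting argument; the paper does not give its own proof but simply cites \cite[Lemma 4.6]{RudelsonVershynin-CombDim}, whose argument is essentially the one you wrote. One minor simplification: since $t = 3a \geq 1$ and $m \leq d$, you can bound $t^m \leq t^d$ directly and then invoke the classical $\sum_{m=0}^d \binom{n}{m} \leq (en/d)^d$, which avoids the $\lambda$-optimization and its boundary cases altogether.
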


We can now state:
\begin{thm}[Part 1 for $K = B_\infty^n$ with logarithmic factors] \label{thm:part1-cubes}
Let $\mu$ be an origin-symmetric log-concave measure on $\Real^n$, let $p \in [1,n]$ and $t \geq 1/n$. 
Set:
\[
D := C_1 \log(e+n)  ~,~  R:= C_2 \log \log (e+n) ,
\]
for appropriate universal constants $C_1,C_2 \geq 1$. 
Assume that $M(Z_p(\mu) , t R B_\infty^n) = e^k$ with $\mu(B_\infty^n) \geq \frac{1}{e}$ and $1 \leq k \leq n$. 
Then there exists $F \in G_{\text{crd}}$ of $\text{dim}(F) = m \in [k/D,k]$, so that:
\begin{enumerate}
\item  $M(P_F Z_p(\mu), t P_F B_\infty^n) \geq e^m$ (\textbf{``Partial Separation Dimension Reduction"}).
\item $\pi_F \mu(P_F B_\infty^n) \geq \mu(B_\infty^n) \geq \frac{1}{e}$ (``\textbf{$P_F B_\infty^n$ is sufficiently massive}"). 
\end{enumerate}
\end{thm}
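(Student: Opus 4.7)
The second requirement is automatic: since $F \in G_{\text{crd}}$ is a coordinate subspace, $B_\infty^n \subseteq P_F^{-1}(P_F B_\infty^n)$, so
\[
\pi_F \mu(P_F B_\infty^n) = \mu(P_F^{-1}(P_F B_\infty^n)) \geq \mu(B_\infty^n) \geq 1/e.
\]
All the effort therefore concerns the packing condition.

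The plan is to run the Rudelson--Vershynin combinatorial-dimension machinery (Theorem~\ref{thm:RV} and Lemma~\ref{lem:SS}) on the normalized body $K_0 := Z_p(\mu)/(tR)$. Converting packing to covering yields $N(K_0, B_\infty^n) \geq e^k$, and Lemma~\ref{lem:IpWish1} together with Lemma~\ref{lem:Guedon} and $\mu(B_\infty^n) \geq 1/e$ (so that $m_1(\mu,B_\infty^n) \leq 1$) give $Z_p(\mu) \subseteq Cp\,B_\infty^n$, hence $K_0 \subseteq (Cp/(tR))B_\infty^n$. I would choose the scale $\eps := 1/\log\log(e+n)$ so that $M_\eps = 4(\log(e+n/k))^\eps \leq 4e$ is a universal constant, and simultaneously the $\ell_\infty$-radius $\eta$ of $CK_0/\eps$ satisfies $\eta = O(pn)$ (using $t \geq 1/n$ and $R = C_2 \log\log(e+n)$), so $\log\eta = O(\log n)$. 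Theorem~\ref{thm:RV} then produces
\[
\Sigma(CK_0/\eps) \geq \exp(ck)
\]
for an absolute $c > 0$.

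Writing $\Sigma(CK_0/\eps) = \sum_{m=0}^n S_m$ with $S_m := \sum_{\dim F = m}\#\{\text{cells in }P_F(CK_0/\eps)\}$, the combinatorial heart is to locate $m^* \in [k/D, k]$ (with $D = C_1 \log(e+n)$) and a coordinate subspace $F^*$ of dimension $m^*$ for which $\#\{\text{cells in }P_{F^*}(CK_0/\eps)\} \geq e^{m^*}$. The small-$m$ tail $m < k/D$ is controlled by the binomial-volume estimate $S_m \leq (Cen\eta/m)^m$, whose logarithm at $m = k/D$ is $(k/D)\cdot O(\log n) \leq ck/2$ once $C_1$ is chosen large. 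Within the good range $[k/D, k]$, pigeonholing over $m$ and then over coordinate subspaces of the chosen dimension produces $F^*$ with $\#\{\text{cells}\} \geq S_{m^*}/\binom{n}{m^*} \geq e^{m^*}$. These cells yield $e^{m^*}$ lattice points (cell corners) in $P_{F^*}(CK_0/\eps)$ pairwise $\ell_\infty$-separated by $\geq 1$; rescaling by $\eps/C$ and then by $tR$ delivers $e^{m^*}$ points in $P_{F^*} Z_p(\mu)$ pairwise $\ell_\infty$-separated by $\geq tR\eps/C = tC_2/C$. Choosing $C_2 > 2C$ secures strict separation $> 2t$, which is precisely $M(P_{F^*} Z_p(\mu), tP_{F^*} B_\infty^n) \geq e^{m^*}$.

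The main obstacle is controlling the large-$m$ tail ($m > k$) in the dimensional distribution argument. The trivial volumetric bound $\#\{\text{cells in }P_F L\} \leq (2\eta)^{\dim F}$ is far too weak when $\eta$ is polynomial in $n$, since it would allow $\sum_{m>k} S_m$ to swamp the lower bound on $\Sigma$. Overcoming this requires the refined estimate of Lemma~\ref{lem:SS}, $\#\{\text{cells in }P_F L\} \leq (C\eta\,\dim F / v(P_F L))^{v(P_F L)}$, combined with the monotonicity $v(P_F L) \leq v(L)$ and an upper bound on $v(CK_0/\eps)$ coming from Lemma~\ref{lem:SS} applied to $L = CK_0/\eps$ itself; it is the arithmetic of these interlocking inequalities that ultimately pins down the orders of magnitude $D \sim \log(e+n)$ and $R \sim \log\log(e+n)$.
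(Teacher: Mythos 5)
Your setup is right and matches the paper's: the massiveness condition is indeed automatic for coordinate projections of $B_\infty^n$; the reduction to $N(Z_p(\mu)/(tR),B_\infty^n)\ge e^k$, the containment $Z_p(\mu)\subset Cp\,B_\infty^n$ via Lemmas \ref{lem:IpWish1} and \ref{lem:Guedon}, the choice $\eps=1/\log\log(e+n)$ making $M_\eps\le 4e$ while keeping $R\eps$ constant, and the invocation of Theorem \ref{thm:RV} to get $\log\Sigma\ge ck$ are all exactly the paper's moves. The gap is in the combinatorial core. You try to pigeonhole the cell content over dimensions and then over coordinate subspaces to find a single $F^*$ of dimension $m^*\in[k/D,k]$ whose projection contains \emph{at least $e^{m^*}$ distinct integer cells}, from whose anchor points you extract the separated set. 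This target is simply not achievable in general: take the body to be (a dilate of) $[-1,1]^n$, whose projection onto any $m$-dimensional coordinate subspace contains exactly $2^m<e^m$ integer cells, so $S_m/\binom{n}{m}=2^m$ and your pigeonhole fails for every $m$, even though $\Sigma=3^n$ is huge and the theorem's conclusion holds. Moreover, your proposed repair of the large-$m$ tail runs the logic of Lemma \ref{lem:SS} backwards: that lemma bounds $\Sigma$ \emph{from above} in terms of $v$, so it cannot produce an upper bound on $v$ (and none exists --- $v$ can be as large as $n$); what it does produce, combined with $\log\Sigma\ge ck$ and $\log(C\eta n/v)\le C\log n$, is a \emph{lower} bound $v\ge k/D$ with $D\simeq\log(e+n)$.

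That lower bound on $v$ is all you need, and it makes the entire spectral decomposition $\Sigma=\sum_m S_m$, both tail estimates, and the double pigeonhole unnecessary. This is the paper's route: $v\bigl(\tfrac{1}{8t}Z_p(\mu)\bigr)\ge k/D$ means some coordinate subspace $F$ of dimension $m$ (which you may cap at $k$ by projecting further, since a coordinate projection of an integer cell is an integer cell) is such that $\tfrac{1}{8t}P_FZ_p(\mu)$ contains \emph{one} integer cell. A single cell already suffices: it is a translate of a cube of side $8t$ inside the convex set $P_FZ_p(\mu)$, and such a cube packs $4^m\ge e^m$ disjoint translates of $tP_FB_\infty^n$ (four points per coordinate, pairwise $\ell_\infty$-separation $8t/3>2t$). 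Your extraction, by contrast, harvests only one separated point per cell and therefore genuinely requires exponentially many cells in a single projection, which is the source of the unfixable tail problem. If you replace ``many cells in one projection'' by ``one cell in a projection of dimension $\ge k/D$, then pack within it,'' your argument collapses to the paper's and closes.
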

\begin{proof}
We know that:
\[
N\brac{\frac{1}{t R} Z_p(\mu) , B_\infty^n} \geq M(Z_p(\mu) , t R B_\infty^n) = e^k ,
\]
and so by Theorem \ref{thm:RV}, we have for any $\eps > 0$:
\begin{equation} \label{eq:inter1}
k \leq 4 \log^{\eps}\brac{e + \frac{n}{k}} \log \Sigma\brac{ \frac{C}{t R \eps} Z_p(\mu) } .
\end{equation}
By Lemma \ref{lem:IpWish1} and \ref{lem:Guedon}, $\mu(B_\infty^n) \geq 1/e$ implies that:
\[
Z_p(\mu) \subset I_p(\mu,B_\infty^n) B_\infty^n \subset C' p \; m_1(\mu,B_\infty^n) B_\infty^n \subset C' p B_\infty^n ,
\]
and so $\frac{C}{t R \eps} Z_p(\mu) \subset \frac{C'' p}{t R \eps} B_\infty^n$. Applying Lemma \ref{lem:SS}, we deduce that:
\begin{equation} \label{eq:inter2}
\log \Sigma\brac{ \frac{C}{t R \eps} Z_p(\mu) } \leq m_\eps \log \brac{\frac{C_3 p n}{t R \eps m_\eps}  } ~,~ m_\eps := v \brac{\frac{C}{t R \eps} Z_p(\mu)} .
\end{equation}
Setting $\eps = 1 / \log \log (e+n)$ and $C_2 = 8 C$, we ensure by (\ref{eq:inter1}) and (\ref{eq:inter2}) that $m := v (\frac{1}{8t} Z_p(\mu))$ satisfies:
\[
k \leq 4 e m \log\brac{\frac{C_3 p n}{t C_2 m}  }  .
\]
Since $p \in [1,n]$ and $t \geq 1/n$, by appropriately selecting $C_1$ we may ensure that:
\[
m \geq k / D . 
\]
This means that there exists $F \in G_{\text{crd}}$ of $\text{dim}(F) = m \geq k/D$ so that $\frac{1}{8t} P_F Z_p(\mu)$ contains an integer cell. In particular (as $M([0,1] , [0,1/4]) \geq e$):
\[
M(P_F Z_p(\mu) , t P_F B_\infty^n) = M( \frac{1}{8t} P_F Z_p(\mu) , \frac{1}{8} P_F B_\infty^n ) \geq M(\frac{1}{2} P_F B_\infty^n , \frac{1}{8} P_F B_\infty^n) \geq e^m . 
\]
Of course, by decreasing $m$ if necessary, we may also always ensure that $m \leq k$. This concludes the proof of the first assertion. The second assertion is obvious since $\pi_F\mu( P_F B_\infty^n ) = \mu(P_F^{-1} P_F B_\infty^n) \geq \mu(B_\infty^n)$. 
\end{proof}

\subsection{Running The Program For Cubes}

Running The (full) Program, we finally obtain:
\begin{thm}[Generalized Regular Dual Sudakov For Cubes with Logarithmic Terms] \label{thm:RegularSudakovCubes}
For any origin-symmetric log-concave measure $\mu$ on $\Real^n$, we have:
\[
M(Z_p(\mu) , t C \log \log (e+n) m_1(\mu, B_\infty^n) B_\infty^n) \leq \exp \brac{C \log(e+n) \brac{ \frac{p}{t^2} + \frac{p}{t} } } \;\;\; \forall p \geq 1 \;\; \forall t > 0 .
\]
\end{thm}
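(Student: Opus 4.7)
The plan is to invoke Theorem \ref{thm:full-program} with $K = B_\infty^n$ and $\L_m := \set{B_\infty^m}$ (the cube in each coordinate subspace of $\Real^n$). Both the hypothesis $\mu(K) \geq 1/e$ and the conclusion are scale-invariant under $\mu \mapsto \mu(\cdot / \lambda)$, so we may normalize to the case $m_1(\mu, B_\infty^n) = 1$ (equivalently $\mu(B_\infty^n) = 1/e$) and recover the factor $m_1(\mu, B_\infty^n)$ in the final statement by undoing the rescaling. Part 3 of The Program is available in full generality from Theorem \ref{thm:part3}. Part 1 is exactly the content of Theorem \ref{thm:part1-cubes}, which supplies the parameters $D = C_1 \log(e+n)$, $R = C_2 \log \log(e+n)$, $A = 1$, and $q_m = 1$ (since for any coordinate subspace $F$, $\pi_F \mu(B_\infty^m) \geq \mu(B_\infty^n) \geq 1/e$). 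Hence only Part 2 remains to be checked.

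To verify Part 2 for $L = B_\infty^m$, fix any log-concave $\nu$ on $\Real^m$ with $\nu(B_\infty^m) \geq e^{-1}$. Lemma \ref{lem:Guedon} implies $m_1(\nu, B_\infty^m) \leq 1$, so $t B_\infty^m \supseteq t \, m_1(\nu, B_\infty^m) B_\infty^m$ and therefore
\[
M(Z_p(\nu), t B_\infty^m) \leq N\brac{Z_p(\nu), t \, m_1(\nu, B_\infty^m) B_\infty^m} .
\]
By Remark \ref{rem:lq}, $B_\infty^m$ is $1$-regular small-diameter with a universal constant, so Theorem \ref{thm:part2-smalldiam} applied with $\alpha = 1$ gives
\[
M(Z_p(\nu), t B_\infty^m) \leq \exp \brac{ C' \frac{m^{2/3} p^{1/3}}{t^{2/3}} + C' \frac{m^{1/2} p^{1/2}}{t^{1/2}} } .
\]
This has the required form $\exp(B + q_m + m \varphi_t(p/m))$ with $B$ a universal constant and $\varphi_t(x) := C' \brac{x^{1/3}/t^{2/3} + x^{1/2}/t^{1/2}}$; one checks $\varphi_t(0) = 0$, $\varphi_t$ is increasing, and $x \mapsto \varphi_t(x)/x = C'\brac{x^{-2/3}/t^{2/3} + x^{-1/2}/t^{1/2}}$ is non-increasing on $(0,1]$.

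It remains to evaluate the constant $C_{A,B,D,\varphi_t,t}$ from (\ref{eq:C-def}). Solving $\varphi_t(x) \simeq 1/(4A)$, the two summands of $\varphi_t$ balance at $x \simeq t^2$ and $x \simeq t$ respectively, so $\varphi_t^{-1}(1/(4A)) \simeq \min(t, t^2)$ and hence $1/(A \varphi_t^{-1}(1/(4A))) \simeq 1/t + 1/t^2$. Since the first term $4\max(C, C'B/R)/t$ in (\ref{eq:C-def}) is absorbed into $1/t$, we conclude $C_{A,B,D,\varphi_t,t} \leq C'' \log(e+n)\brac{1/t + 1/t^2}$, and Theorem \ref{thm:full-program} yields precisely the asserted bound (after restoring the $m_1(\mu, B_\infty^n)$ scale factor). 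No substantive obstacle is anticipated: all ingredients are in place, and the only care needed is to track the scale invariance and the two regimes of $\varphi_t^{-1}(1/(4A))$ according to whether $t \leq 1$ or $t \geq 1$.
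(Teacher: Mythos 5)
Your proposal follows essentially the same route as the paper's own proof: normalize $m_1(\mu,B_\infty^n)=1$, run the full Program with $K=B_\infty^n$ and $\L_m=\set{B_\infty^m}$, take Part 1 from Theorem \ref{thm:part1-cubes}, Part 2 from Theorem \ref{thm:part2-smalldiam} via Remark \ref{rem:lq} with $\alpha=1$, and compute $\varphi_t^{-1}(1/(4A))\simeq \min(t,t^2)$. The one point you overlook is that Theorem \ref{thm:part1-cubes} is only stated for $t\geq 1/n$, so your invocation of it does not cover the range $t\in(0,1/n)$; the paper handles that residual (uninteresting) range separately by the trivial volumetric extension of Lemma \ref{lem:extend} together with (\ref{eq:cover-pack}), and you should add the same one-line patch. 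Otherwise the argument is correct and complete.
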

\begin{proof}
Since the expression on the left-hand-side is invariant under scaling of $\mu$, we may assume that  $m_1(\mu,B_\infty^n) = 1$. 

Given $p \geq 1$ and $t \geq 1/n$, we run The Program for $K = B_\infty^n$, with $\L_m = \set{B_\infty^m}$. Theorem \ref{thm:part1-cubes} verifies Part 1 of The Program with $D=C_1 \log(e+n)$, $R = C_2 \log \log(e+n)$, $A=1$, $B=1$ and $q_m=1$. Part 2 of The Program regarding Weak Generalized Regular Dual Sudakov, with parameters $q_m = 1$ and $\varphi_t(x) = C' \max(x^{1/3}/t^{2/3}, \sqrt{x} / \sqrt{t})$, is established in Theorem \ref{thm:part2-smalldiam} (recalling (\ref{eq:cover-pack})). Since $\varphi_t^{-1}(y) \simeq \min(y^{3} t^2, y^2 t)$, we have $\varphi_t^{-1}(1/(4A)) \simeq \min(t,t^2)$, and Theorem \ref{thm:full-program} yields the  asserted estimate in the range $t \geq 1/n$. The estimate remains valid after adjustment of constants (and in fact can be significantly improved) in the remaining uninteresting range $t \in (0,1/n)$ by Lemma \ref{lem:extend} and (\ref{eq:cover-pack}). This concludes the proof. 
\end{proof}

\begin{cor}[Generalized Regular Dual Sudakov For Polytopes with Few Facets and Logarithmic Terms] \label{cor:RegularSudakovPolytopes}
For any origin-symmetric log-concave measure $\mu$ on $\Real^n$, and any origin-symmetric polytope $K \subset \Real^n$ with $2N$ facets, we have:
\[
M(Z_p(\mu) , t C \log \log (e+N) m_1(\mu, K) K) \leq \exp \brac{C \log(e+N) \brac{ \frac{p}{t^2} + \frac{p}{t} } } \;\;\; \forall p \geq 1 \;\; \forall t > 0 .
\]
\end{cor}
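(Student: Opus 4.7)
The plan is to embed $K$ isometrically into a cube of dimension $N$ and then invoke Theorem \ref{thm:RegularSudakovCubes} in the ambient space $\Real^N$. Since $K$ has $2N$ facets, there exist $u_1,\ldots,u_N \in \Real^n$ (the outer unit normals, paired by symmetry) so that $\norm{x}_K = \max_{i \le N} \abs{\scalar{x,u_i}}$, and the $u_i$ span $\Real^n$ because $K$ is bounded. Define the injective linear map $T : \Real^n \to \Real^N$ by $Tx := (\scalar{x,u_i})_{i=1}^N$; it satisfies $\norm{Tx}_\infty = \norm{x}_K$ and $K = T^{-1}(B_\infty^N)$, so $T$ isometrically embeds $(\Real^n, \norm{\cdot}_K)$ into $(\Real^N, \norm{\cdot}_\infty)$. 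Set $\nu := T_* \mu$, an origin-symmetric log-concave probability measure on $\Real^N$, albeit supported on the $n$-dimensional subspace $F := T(\Real^n)$. From $h_{Z_p(\nu)}(\theta) = h_{Z_p(\mu)}(T^*\theta)$ one reads off that $Z_p(\nu) = T(Z_p(\mu)) \subset F$; clearly $\nu(B_\infty^N) = \mu(K)$ and $I_1(\nu,B_\infty^N) = I_1(\mu,K)$, hence $m_1(\nu,B_\infty^N) \simeq m_1(\mu,K)$ by Lemma \ref{lem:Guedon}. Finally, since $F \cap B_\infty^N = T(K)$ and all relevant differences lie in $F$, a set $\set{x_i} \subset Z_p(\mu)$ is $sK$-separated in $\Real^n$ if and only if $\set{Tx_i} \subset Z_p(\nu)$ is $sB_\infty^N$-separated in $\Real^N$, so
\[
M(Z_p(\mu), s K) = M(Z_p(\nu), s B_\infty^N) \;\;\; \forall s > 0 .
\]

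The only difficulty is that $\nu$ is degenerate on $\Real^N$, while Theorem \ref{thm:RegularSudakovCubes} is stated for absolutely continuous log-concave measures. I would handle this by Gaussian regularization: for $\eps > 0$, set $\nu_\eps := \nu \ast \gamma_N^\eps$ where $\gamma_N^\eps$ is the centered Gaussian on $\Real^N$ with covariance $\eps \; \mathrm{Id}$. Each $\nu_\eps$ is a genuinely $N$-dimensional origin-symmetric log-concave probability measure. Jensen's inequality applied pointwise in $\theta$ to the convex function $r \mapsto \abs{r}^p$ gives $h_{Z_p(\nu_\eps)}(\theta) \geq h_{Z_p(\nu)}(\theta)$, hence $Z_p(\nu_\eps) \supseteq Z_p(\nu)$ and consequently $M(Z_p(\nu), s B_\infty^N) \leq M(Z_p(\nu_\eps), s B_\infty^N)$ for every $s>0$. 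Moreover $m_1(\nu_\eps, B_\infty^N) \to m_1(\nu,B_\infty^N)$ as $\eps \to 0$ (using weak convergence together with the fact that log-concave measures do not charge the boundary of any dilate of $B_\infty^N$, and the equivalence $m_1 \simeq I_1$ of Lemma \ref{lem:Guedon}).

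Applying Theorem \ref{thm:RegularSudakovCubes} to $\nu_\eps$ in $\Real^N$ yields, for every $p \geq 1$ and $t > 0$,
\[
M\brac{Z_p(\nu_\eps), t \, C \log\log(e+N) \, m_1(\nu_\eps, B_\infty^N) \, B_\infty^N} \leq \exp\brac{C \log(e+N) \brac{\tfrac{p}{t^2} + \tfrac{p}{t}}} .
\]
To convert this into the corresponding bound for $\nu$, I choose, for each $\eps>0$, a rescaled $t_\eps := t \, m_1(\nu, B_\infty^N)/m_1(\nu_\eps, B_\infty^N)$ so that the radius at which $\nu_\eps$ is packed coincides with the desired $t \, C \log\log(e+N) \, m_1(\nu, B_\infty^N)$; combined with the monotonicity $Z_p(\nu) \subseteq Z_p(\nu_\eps)$, this gives
\[
M\brac{Z_p(\nu), t \, C \log\log(e+N) \, m_1(\nu, B_\infty^N) \, B_\infty^N} \leq \exp\brac{C \log(e+N) \brac{\tfrac{p}{t_\eps^2} + \tfrac{p}{t_\eps}}} ,
\]
and letting $\eps \to 0$ (so that $t_\eps \to t$) and translating back via the identities of the first paragraph produces the asserted estimate for $\mu$ and $K$. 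The main obstacle is precisely this degeneracy issue; once the smoothing bookkeeping is in place, the corollary is essentially an immediate transcription of the cube theorem through the embedding $T$.
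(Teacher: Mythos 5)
Your proposal is correct and follows essentially the same route as the paper: both realize $K$ as a central section of $B_\infty^N$, lift $\mu$ to a non-degenerate origin-symmetric log-concave measure on $\Real^N$ approximating the degenerate push-forward, apply Theorem \ref{thm:RegularSudakovCubes} there, and pass to the limit using the inclusion $Z_p(\nu) \subseteq Z_p(\nu_\eps)$ together with convergence of $m_1 \simeq I_1$ (Lemma \ref{lem:Guedon}). The only difference is the regularization device --- you convolve with a small Gaussian on $\Real^N$, whereas the paper forms the product of $\mu$ with a shrinking log-concave measure on the orthogonal complement --- and this is immaterial.
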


\begin{proof}
Any $K \subset \Real^n$ as in the assertion is the unit-ball of an $n$-dimensional subspace $E$ (which we identify with $\Real^n$) of $\ell_\infty^N$ (in an appropriate basis), so that $K = B_\infty^N \cap E$. Let $\nu$ denote a compactly supported origin-symmetric log-concave probability measure on $E^\perp$, and let $\set{\nu_k}$ denote rescaled copies of $\nu$ which weakly converge to the delta-measure at the origin of $E^\perp$. Let $\mu$ be an origin-symmetric log-concave measure on $E$, which we may assume by approximation is compactly supported as well. Denote the product measure $\mu_k := \mu \otimes \nu_k$, which clearly has even log-concave density on $\Real^N$. By Theorem \ref{thm:RegularSudakovCubes} and Lemma \ref{lem:Guedon} applied to $\mu_k$ on $\Real^N$, we have for fixed $p \geq 1$ and $t > 0$:
\begin{equation} \label{eq:polytope-proof}
M(Z_p(\mu_k) , t C \log \log (e+N) I_1(\mu_k, B_\infty^N) B_\infty^N) \leq \exp \brac{C \log(e+N) \brac{ \frac{p}{t^2} + \frac{p}{t} } } .
\end{equation}
Note that by integrating against a bounded continuous function on $\Real^N$ and applying the Fubini and Lebesgue Dominant Convergence theorems, it follows that $\mu_k$ weakly converge to $\mu$. As all measures are uniformly compactly supported, it follows that $I_1(\mu_k,B_\infty^N)$ converges to $I_1(\mu,B_\infty^N) = I_1(\mu,K)$. 
In addition, since $Z_p(\mu) \times \set{0} \subset Z_p(\mu_k)$, 
it follows by definition and monotonicity of the packing numbers that for any $s > 0$:
\[
M(Z_p(\mu), s K) = M(Z_p(\mu) \times \set{0} , s B_\infty^N \cap E) = M(Z_p(\mu) \times \set{0} , s B_\infty^N) \leq M(Z_p(\mu_k) , s B_\infty^N) . 
\]
Combining the above observations, we obtain for large enough $k$:
\begin{align*}
& M(Z_p(\mu) ,  2 t C \log \log (e+N) I_1(\mu, K) K) \leq M(Z_p(\mu) ,  t C \log \log (e+N) I_1(\mu_k,B_\infty^N) K) \\
& \leq M(Z_p(\mu_k) , t C \log \log (e+N) I_1(\mu_k,B_\infty^N) B_\infty^N) .
\end{align*} 
Together with (\ref{eq:polytope-proof}) and another application of Lemma \ref{lem:Guedon}, the assertion follows after a possible readjustment of constants. 
\end{proof}

\section{Concluding Remarks} \label{sec:conclude}

\subsection{Generalized Regular Sudakov Minoration: Ellipsoids} \label{subsec:conclude-ell}

Recall that the following estimate was established in Theorem \ref{thm:RegularSudakovEllipsoids}:
\begin{equation} \label{eq:conclude-ellipsoids}
M(Z_p(\mu) , t m_1(\mu, \Eps) \Eps) \leq \exp \brac{C \brac{ \frac{p}{t^2} + \frac{p}{t} } } \;\;\; \forall p \geq 1 \;\; \forall t > 0 ,
\end{equation}
for any origin-symmetric log-concave measure $\mu$ on $\Real^n$ and any (origin-symmetric) ellipsoid $\Eps \subset \Real^n$. 
Let us expand on some of the comments regarding this estimate given in the Introduction. 

\medskip

In terms of sharpness, first recall that $Z_p(\mu) \subset p I_1(\mu,\Eps) \Eps$, and so the left-hand-side is $1$ for $t \geq C' p$ and the estimate is of the correct order (up to the value of $C > 0$) in that range. Moreover, our estimate yields the correct worst-case behavior in the range $t \in [1,C' p]$ as well. This is easily seen by degenerating $\mu$ to a one-dimensional (two-sided) exponential measure, in which case $Z_p(\mu)$ approximates an interval of length of order $p$, and $m_1(\mu,B_2^n)$ is of the order of $1$. 

\smallskip

We now claim that (\ref{eq:conclude-ellipsoids}) yields the correct worst-case behavior for all $t \in [\sqrt{p/n} , 1]$. To see this, set $\mu$ to be the standard Gaussian measure $\gamma_n$ so that $Z_p(\gamma_n) \simeq \sqrt{p} B_2^n$, and let $\Eps$ denote the cylinder $\sqrt{k} B_2^k \times \Real^{n-k}$ (which we think of as a degenerate ellipsoid, as it can obviously be approximated by proper ones). Clearly $m_1(\gamma_n,\Eps) \simeq I_2(\gamma_n,\Eps) = 1$, and we have by the volumetric estimate (\ref{eq:volumetric}):
\[
M(\sqrt{p} B_2^n , t_0 \Eps) = M(\sqrt{p} B_2^k , t_0 \sqrt{k} B_2^k) \geq e^k 
\]
for $t_0 = \frac{1}{2 e} \sqrt{p/k}$. Consequently, we confirm that for an appropriate constant $c >0$:
\[
M(Z_p(\gamma_n) , c \sqrt{p/k} \; m_1(\gamma_n, \Eps) \Eps)  \geq e^k ,
\]
and letting $k$ range from $\lceil p \rceil$ to $n$, the sharpness of (\ref{eq:conclude-ellipsoids}) for all $t \in [\sqrt{p/n} , 1]$ is established. 
When $t \in (0,\sqrt{p/n})$ the estimate is definitely loose, as simply seen by a volumetric argument (as explained in Lemma \ref{lem:extend}); however, we do not try to improve the estimate in this uninteresting range.

\medskip

As already mentioned in the Introduction, when $\mu$ is isotropic and $\Eps = B_2^n$ (and hence $m_1(\mu,B_2^n) \simeq \sqrt{n}$), the estimate (\ref{eq:conclude-ellipsoids}) was already obtained by Giannopoulos--Paouris--Valettas in \cite[Proposition 5.1]{GPV-ImprovedPsi2} (for $t \geq \sqrt{p/n}$, but the same estimate remains valid for all $t >0$ by Lemma \ref{lem:extend}) using a delicate refinement of Talagrand's approach for proving the (dual) Sudakov Minoration. It is possible to extend the latter approach to the non-isotropic setting, yielding the following estimate (we refrain from providing the details): 
\begin{equation} \label{eq:conclude-ellipsoids-paper1}
M(Z_p(\mu), t m_p(\mu, \Eps) \Eps) \leq \exp \brac{C \frac{p}{t^2}} \;\;\; \forall t \in [0,1] . 
\end{equation}
While (\ref{eq:conclude-ellipsoids}) improves upon (\ref{eq:conclude-ellipsoids-paper1}) in the range $t \geq 1$, note that (\ref{eq:conclude-ellipsoids-paper1}) involves the smaller quantile $m_p(\mu,\Eps) \leq m_1(\mu,\Eps)$, so these two estimates are ultimately incomparable. 
An alternative proof of (\ref{eq:conclude-ellipsoids}) in the isotropic case was obtained in \cite{GPV-DistributionOfPsi2} using a very similar approach to the one we employ in this work.
 For $p \geq \sqrt{n} \log^2(1+n)$, improved covering estimates in the range $t \in [\log^2(1+n) , p/\sqrt{n}]$  have been obtained for the isotropic case in \cite[Subsection 3.3]{EMilman-IsotropicMeanWidth}. In the non-isotropic case, a general formula in terms of the eigenvalues $\set{\lambda_i^2}_{i=1}^n$ of $\Cov(\mu)$ may also be obtained by employing Theorem \ref{thm:Sudakov} (improved Sudakov Minoration) and the estimate on $M^*(Z_p(\mu))$ from \cite[Theorem 1.3]{EMilman-IsotropicMeanWidth}; as $m_1(\mu,B_2^n) \simeq I_2(\mu,B_2^n) = \sum_{i=1}^n \lambda_i^2$, this results in possible improvements over (\ref{eq:conclude-ellipsoids}) in a certain range of the parameters $\set{\lambda_i} , p , t$; we leave the details to the interested reader. 

\subsection{Generalized Regular Sudakov Minoration: Cubes}

We now turn to the estimate established in Theorem \ref{thm:RegularSudakovCubes}:
\begin{equation} \label{eq:conclude-cubes}
M(Z_p(\mu) , t C \log \log (e+n) m_1(\mu, B_\infty^n) B_\infty^n) \leq \exp \brac{C \log(e+n) \brac{ \frac{p}{t^2} + \frac{p}{t} } } \;\;\; \forall p \geq 1 \;\; \forall t > 0 
\end{equation}
for any origin-symmetric log-concave measure $\mu$ on $\Real^n$. Up to the logarithmic terms above, this estimate is again seen to be sharp in the range $t \geq 1$, exactly as in the preceding analysis for ellipsoids. 

\smallskip

In the range $t \in [\sqrt{p/n} , 1]$, the estimate (\ref{eq:conclude-cubes}) remains sharp up to logarithmic terms in the dimension. To see this, set again $\mu$ to be the Gaussian measure $\gamma_n$, for which it is well-known that $m_1(\gamma_n , B_\infty^n) \simeq \sqrt{\log(1+n)}$. Applying the precise covering estimates of Sch\"utt \cite[Theorem 1]{Schutt-Entropy-numbers}, we have:
\[
M\brac{Z_p(\gamma_n) , C \frac{\sqrt{\log (1+n/k)}}{\sqrt{\log (1+n)}} \frac{\sqrt{p}}{\sqrt{k}} m_1(\gamma_n , B_\infty^n)  B_\infty^n} 
\geq M\brac{ B_2^n , C' \frac{\sqrt{\log (1+n/k)}}{\sqrt{k}}  B_\infty^n} 
 \geq e^k ,
\]
for all $\log(1+n) \leq k \leq n$. This confirms the sharpness of (\ref{eq:conclude-cubes}) up to the logarithmic terms there for all $t \in [\sqrt{p/n^\alpha},1]$ for any fixed $\alpha \in (0,1)$, and up to an additional $\log(e+n)$ term in the range $t \in [\sqrt{p/n},\sqrt{p/n^\alpha}]$. Curiously, in the latter range, this additional term yields a packing estimate for $M(Z_p(\gamma_n) , t m_1(\gamma_n, B_\infty^n) B_\infty^n)$ which is even better than the expected $\exp(C \frac{p}{t^2})$, and we do not know 
whether this is indeed the worst-possible expected behaviour for a general $\mu$. As in the case of ellipsoids, the estimate is definitely loose in the range $t \in (0,\sqrt{p/n})$ by a simple volumetric estimate. 

\subsection{Completing The Program}

The results we obtain in this work completely resolve Part 3 of The Program, and almost entirely Part 2 as well. For instance, if the initial log-concave probability measure $\mu$ is assumed $1$-pure (e.g. super-Gaussian, sub-Gaussian or unconditional), then by Proposition \ref{prop:pure-1}, so will be all of its marginals $\nu \in \M_m$, for which we have a Weak Sudakov Minoration result by Theorem \ref{thm:part2-pure}. In particular, up to the Slicing Problem, Part 2 is completely established. 

\smallskip

Consequently, it is clear that the main remaining challenge in completing The Program lies in establishing Part 1 of The Program. This is a significant challenge even for some specific convex bodies $K$ besides ellipsoids, such as for $K = B_1^n$. To carry out this Separation Dimension-Reduction step, it seems that we would need to employ other measures on the Grassmannian $G_{n,m}$ besides the uniform Haar measure, upon which most of the (Euclidean) Asymptotic Geometric Analysis theory is built. In our opinion, this is a fascinating challenge, which we plan to explore in a future work.

\setlength{\bibspacing}{2pt}

\bibliographystyle{plain}
\bibliography{../../../../ConvexBib}

 \end{document}